\documentclass[11pt,twoside]{amsart}

\usepackage[latin1]{inputenc}
\usepackage[OT1]{fontenc}
\usepackage{amsmath}
\usepackage{amsthm}
\usepackage{amssymb}
\usepackage[all]{xy}
\usepackage{hyperref}
\usepackage{amscd}
\usepackage{a4wide}
\usepackage{enumitem}

\DeclareMathOperator{\id}{id}

\DeclareMathOperator{\Hom}{Hom}

\DeclareMathOperator{\sHom}{\mathcal{H}\textit{om}}
\DeclareMathOperator{\Ext}{Ext}

\DeclareMathOperator{\Coh}{Coh}

\DeclareMathOperator{\sExt}{\mathcal{E}\textit{xt}}
\DeclareMathOperator{\Ho}{H}

\DeclareMathOperator{\ord}{ord}

\DeclareMathOperator{\cone}{cone}

\DeclareMathOperator{\Sp}{\mathsf{Sp}}

\newcommand{\D}{{\rm D}}

\DeclareMathOperator{\Aut}{\mathsf{Aut}}

\DeclareMathOperator{\Pic}{\mathsf {Pic}}

\newcommand{\cH}{{\mathcal H}}

\newcommand{\pt}{\mathsf{pt}}

\newcommand{\IC}{\mathbb{C}}

\newcommand{\IN}{\mathbb{N}}
\newcommand{\IP}{\mathbb{P}}

\newcommand{\IR}{\mathbb{R}}
\newcommand{\IZ}{\mathbb{Z}}

\DeclareMathOperator{\FM}{\mathsf{FM}}

\DeclareMathOperator{\HK}{\mathsf{HK}}
\DeclareMathOperator{\CY}{\mathsf{CY}}
\DeclareMathOperator{\Kthree}{\mathsf{K3}}

\let\dim\relax
\DeclareMathOperator{\dim}{\mathsf{dim}}

\let\deg\relax
\DeclareMathOperator{\deg}{\mathsf{deg}}

\let\ord\relax
\DeclareMathOperator{\ord}{\mathsf{ord}}

\let\gcd\relax
\DeclareMathOperator{\gcd}{\mathsf{gcd}}

\newcommand{\sym}{\mathfrak S}

\newcommand{\cF}{\mathcal F}

\newcommand{\cE}{\mathcal E}

\newcommand{\cX}{\mathcal X}

\newcommand{\cQ}{\mathcal Q}

\newcommand{\reg}{\mathcal O}

\renewcommand{\P}{\mathbb P}
\renewcommand{\theta}{\vartheta}
\renewcommand{\phi}{\varphi}
\renewcommand{\_}{\underline{\,\,\,\,}}

\newtheorem{theorem}{Theorem}[section]
\newtheorem{prop}[theorem]{Proposition}
\newtheorem{lemma}[theorem]{Lemma}
\newtheorem{cor}[theorem]{Corollary}

\newtheorem{obs}[theorem]{Observation}

\theoremstyle{definition}
\newtheorem{definition}[theorem]{Definition}

\newtheorem{remark}[theorem]{Remark}
\newtheorem{convention}[theorem]{Convention}

\begin{document}

\title{Varieties with $\P$-units}
\author{Andreas Krug}
\address{Universit\" at Marburg}
\email{andkrug@mathematik.uni-marburg.de}
\begin{abstract}
We study the class of compact K\"ahler manifolds with trivial canonical bundle and the property that the cohomology of the trivial line bundle is generated by one element. If the square of the generator is zero, we get the class of strict Calabi--Yau manifolds. If the generator is of degree $2$, we get the class of compact hyperk\"ahler manifolds. We provide some examples and structure results for the cases where the generator is of higher nilpotency index and degree. In particular, we show that varieties of this type are closely related to higher-dimensional Enriques varieties.
\end{abstract}

\maketitle

\section{Introduction}

In this paper we will study a certain class of compact K\"ahler manifolds with trivial canonical bundle which contains all strict Calabi--Yau varieties as well as all hyperk\"ahler manifolds. 
For the bigger class of manifolds with trivial first Chern class $c_1(X)=0\in \Ho^2(X,\IR)$ there exists the following nice structure theorem, known as the \textit{Beauville--Bogomolov decomposition}; see \cite{Beasome}. Namely, each such manifold $X$ admits an \'etale covering $X'\to X$ which decomposes as
\begin{align*}
 X'=T\times \prod_{i} Y_i\times \prod_j Z_j
\end{align*}
where $T$ is a complex torus, the $Y_i$ are hyperk\"ahler, and the $Z_j$ are simply connected strict Calabi--Yau varieties of dimension at least $3$. 
 
Given a variety $X$, the graded algebra $\Ho^*(\reg_X):=\oplus_{i=0}^{\dim X}\Ho^i(X,\reg_X)[-i]$ is considered an important invariant; see, in particular, Abuaf \cite{Abuunit} who calls $\Ho^*(\reg_X)$ the \textit{homological unit} of $X$ and conjectures that it is stable under derived equivalences.
In this paper, we want to study varieties which have trivial canonical bundle and the property that the algebra $\Ho^*(\reg_X)$ is generated by one element.

The main motivation are the following two observations.
Let $X$ be a compact K\"ahler manifold.
\begin{obs}\label{CYobs}
 $X$ is a strict Calabi--Yau manifold if and only if the canonical bundle $\omega_X$ is trivial and $\Ho^*(\reg_X)\cong\IC[x]/x^2$ with $\deg x=\dim X$.  
These conditions can be summarised in terms of objects of the bounded derived category $\D(X):=\D^b(\Coh(X))$ of coherent sheaves. Namely, $X$ is a strict Calabi--Yau manifold if and only if $\reg_X\in \D(X)$ is a \textit{spherical object} in the sense of Seidel and Thomas \cite{ST}.
\end{obs}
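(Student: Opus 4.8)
The plan is to treat the statement as an unwinding of the relevant definitions, with Serre duality as the only non-formal ingredient. Recall that I take a compact K\"ahler manifold $X$ of dimension $n$ to be a \emph{strict Calabi--Yau manifold} precisely when $\omega_X\cong\reg_X$ and $\Ho^i(X,\reg_X)=0$ for all $0<i<n$. Two facts are used throughout: since $X$ is connected, $\Ho^0(X,\reg_X)=\IC$; and Serre duality on a compact complex manifold gives $\Ho^i(X,\reg_X)\cong\Ho^{n-i}(X,\omega_X)\dual$.

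First I would establish the equivalence of the two stated geometric conditions. Assume $\omega_X\cong\reg_X$. Then Serre duality specialises to $\Ho^i(X,\reg_X)\cong\Ho^{n-i}(X,\reg_X)\dual$; in particular $\Ho^n(X,\reg_X)\cong\Ho^0(X,\reg_X)\dual=\IC$. Hence the vanishing $\Ho^i(X,\reg_X)=0$ for $0<i<n$ is equivalent to saying that, as a graded vector space, $\Ho^*(\reg_X)$ is one-dimensional in degrees $0$ and $n$ and zero elsewhere. The algebra structure is then forced: writing $x$ for a generator of $\Ho^n(X,\reg_X)$, the product $x^2$ lies in $\Ho^{2n}(X,\reg_X)$, which vanishes because $2n>n=\dim X$; thus $\Ho^*(\reg_X)\cong\IC[x]/x^2$ with $\deg x=n$ automatically. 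Conversely, if $\omega_X\cong\reg_X$ and $\Ho^*(\reg_X)\cong\IC[x]/x^2$ with $\deg x=n$, the intermediate cohomology vanishes by inspection, so $X$ is strict Calabi--Yau.

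Next I would record the equivalence with sphericalness. By definition an object $E\in\D^b(\Coh(X))$ is spherical if $E\otimes\omega_X\cong E$ and $\Ext^*(E,E)\cong\Ho^*(S^n,\IC)$ as graded rings, that is $\Ext^i(E,E)=\IC$ for $i\in\{0,n\}$ and $0$ otherwise. For $E=\reg_X$ the first condition reads $\omega_X\cong\reg_X$, and the Yoneda algebra $\Ext^*(\reg_X,\reg_X)$ is canonically identified with $\Ho^*(\reg_X)$ equipped with its cup product. Hence the spherical condition on $\reg_X$ is exactly $\omega_X\cong\reg_X$ together with $\Ho^*(\reg_X)\cong\IC[x]/x^2$, $\deg x=n$, which matches the second condition verbatim.

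I do not expect a genuine obstacle: the content is definitional once Serre duality is in hand. The only point deserving a word of care is the identification of graded ring structures---that the forced relation $x^2=0$ really reproduces $\Ho^*(S^n,\IC)$ and not some nontrivial deformation---but this is immediate from the degree bound $\Ho^{>n}(\reg_X)=0$, which leaves no room for a nonzero product in the relevant degree.
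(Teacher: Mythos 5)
Your proposal is correct and takes the same route the paper implicitly takes: the paper offers no separate proof, remarking only that the observation is ``a very simple reformulation of the standard definition,'' and your write-up simply makes that reformulation explicit via Serre duality (forcing $\Ho^{\dim X}(\reg_X)\cong\IC$ and $x^2=0$ for degree reasons) together with the standard identification of the Yoneda algebra $\Hom^*(\reg_X,\reg_X)$ with $\Ho^*(\reg_X)$ under cup product, which the paper itself records in Remark \ref{regspecial}. No gaps; this is exactly the intended argument, spelled out.
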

 The above is a very simple reformulation of the standard definition of a strict Calabi--Yau manifold. The second observation is probably less well-known. 
\begin{obs}\label{HKobs}
 $X$ is a hyperk\"ahler manifold of dimension $\dim X=2n$ if and only if $\omega_X$ is trivial and $\Ho^*(\reg_X)\cong \IC[x]/x^{n+1}$ with $\deg x=2$. This is equivalent to the condition that $\reg_X\in \D(X)$ is a \textit{$\IP^n$-object} in the sense of Huybrechts and Thomas \cite{HT}. 
\end{obs}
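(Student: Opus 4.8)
The plan is to treat the two asserted equivalences separately, beginning with the essentially formal reformulation in terms of $\IP^n$-objects. Since $\Ext^i(\reg_X,\reg_X)=\Ho^i(X,\reg_X)$, the graded ring $\Hom^*(\reg_X,\reg_X)=\bigoplus_i\Ext^i(\reg_X,\reg_X)$ is exactly $\Ho^*(\reg_X)$, so the requirement $\Ho^*(\reg_X)\cong\IC[x]/x^{n+1}$ with $\deg x=2$ is literally the ring-theoretic half of the definition of a $\IP^n$-object, while the remaining half $\reg_X\otimes\omega_X\cong\reg_X$ says precisely that $\omega_X$ is trivial. Thus the second equivalence holds by definition once the first is established, and I would dispatch it in a line. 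The real content is the equivalence of ``hyperk\"ahler of dimension $2n$'' with ``$\omega_X$ trivial and $\Ho^*(\reg_X)\cong\IC[x]/x^{n+1}$, $\deg x=2$''.

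For the forward implication I would use the Hodge theory of an irreducible holomorphic symplectic manifold $X$. The symplectic form $\sigma\in\Ho^0(X,\Omega_X^2)$ has $\sigma^n$ a nowhere-vanishing section of $\Omega_X^{2n}=\omega_X$, so $\omega_X$ is trivial. Via the conjugation isomorphism $\Ho^q(X,\reg_X)=\Ho^{0,q}(X)\cong\overline{\Ho^0(X,\Omega_X^q)}$ and the fact that on such a manifold the algebra of holomorphic forms is $\bigoplus_k\IC\cdot\sigma^k$ (holonomy $\Sp(n)$), one gets $\dim\Ho^q(X,\reg_X)=1$ for $q$ even with $0\le q\le 2n$ and $0$ otherwise. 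Writing $x=\bar\sigma\in\Ho^2(X,\reg_X)$ and using that conjugation intertwines the cup product with the wedge of holomorphic forms, $x^k=\overline{\sigma^k}\neq0$ for $k\le n$ while $x^{n+1}$ sits in degree $>2n$ and vanishes; this identifies $\Ho^*(\reg_X)$ with $\IC[x]/x^{n+1}$.

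For the converse, the clean first step extracts the symplectic form. From $\dim\Ho^2(X,\reg_X)=1$ and $h^{2,0}=h^{0,2}$ there is a unique (up to scalar) $\sigma\in\Ho^0(X,\Omega_X^2)$, corresponding under conjugation to the generator $x$. Since $x^n\neq0$ in $\IC[x]/x^{n+1}$ and conjugation is multiplicative, $\sigma^n\neq0$ in $\Ho^0(X,\omega_X)$; but $\omega_X\cong\reg_X$ forces $\Ho^0(X,\omega_X)\cong\IC$, so every nonzero section is nowhere vanishing. Hence $\sigma^n$ trivialises $\Omega_X^{2n}$, which is exactly the pointwise nondegeneracy of $\sigma$: $X$ carries a holomorphic symplectic form. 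Moreover $\dim\Ho^1(X,\reg_X)=0$ gives $b_1(X)=0$, and $\chi(X,\reg_X)=n+1\neq0$. I would then feed $c_1(X)=0$ into the Beauville--Bogomolov decomposition, passing to a finite \'etale cover $X'=T\times\prod_iY_i\times\prod_jZ_j$. Multiplicativity of $\chi(\reg_{-})$ under products and \'etale covers, together with $\chi(\reg_T)=0$ when $\dim T>0$, rules out a torus factor; the strict Calabi--Yau factors $Z_j$ are excluded because $\pi^*x$ lies in the span of the degree-$2$ classes $x_i$ coming from the $Y_i$, so $(\pi^*x)^n$ (nonzero, as $\pi^*$ is injective) involves no class from the $Z_j$ and would vanish in the top degree of $X'$ unless no $Z_j$ is present.

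The main obstacle is the final step: showing that the cover is a single hyperk\"ahler factor $Y$ and that the necessarily free, finite deck group $G=\pi_1(X)$ is trivial, so that $X=Y$ is irreducible holomorphic symplectic. Here I would combine two constraints. First, freeness forces the holomorphic Lefschetz number $L(g)=\sum_k\tr\!\big(g\mid\Ho^{2k}(\reg_{X'})\big)$ to vanish for every $g\neq1$; for a single factor this already gives a contradiction, since either $G$ fixes $\sigma$ and then $L(g)=n+1\neq0$, or $G$ acts by a nontrivial character on $\IC\sigma$ and then $h^{2,0}(X)=0\neq1$. Second, to exclude several factors I would use not merely $\chi$ but the full graded condition $\dim\Ho^{2k}(\reg_{X'})^{G}=1$ for all $k$: a factor permutation makes some $L(g)\neq0$, violating freeness, while a non-permuting action on $\bigotimes_i\IC[x_i]/x_i^{n_i+1}$ cannot cut every graded piece down to dimension one once $r\ge2$. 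Making this dichotomy precise — the interplay between freeness (vanishing of all $L(g)$) and one-dimensionality of the invariants in each degree — is where the argument requires the most care, and I expect it to be the crux distinguishing the genuine hyperk\"ahler case (trivial $\omega_X$, no nontrivial quotient) from the higher-index Enriques-type quotients studied later in the paper.
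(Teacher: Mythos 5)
Your proposal follows the paper's proof of Proposition \ref{HKchar} almost step for step through its first half: the formal reformulation via $\Hom^*(\reg_X,\reg_X)\cong\Ho^*(\reg_X)$, the forward direction via holonomy, and, for the converse, the Beauville--Bogomolov cover, killing the torus factor with $\chi(\reg_{-})$ and the Calabi--Yau factors with the degree argument. The gap is in the final step, exactly where you predicted the crux would be, and both halves of your proposed dichotomy fail as stated. First, the claim that ``a factor permutation makes some $L(g)\neq 0$, violating freeness'' is false: take $Y\in\Kthree$ carrying a fixed-point-free (Enriques) involution $\iota$ and consider $\phi=(\iota\times\id)\circ(1\,2)$ on $Y\times Y$. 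By Lemma \ref{freeautos}\ref{freeautos2} this acts freely, so all its holomorphic Lefschetz numbers vanish --- indeed $\phi^*$ has traces $1,0,-1$ on $\Ho^0,\Ho^2,\Ho^4$ --- yet it permutes the factors. Free factor-permuting automorphisms are precisely the raw material of the paper's later constructions (Sections \ref{furtherconst1} and \ref{n=2}), so they cannot be excluded by Lefschetz theory. Second, the claim that a non-permuting action ``cannot cut every graded piece down to dimension one once $r\ge 2$'' is also false as an algebraic statement: on $Y_1\times Y_2$ with $Y_1\in\HK_4$, $Y_2\in\Kthree$, the diagonal action $y_1\mapsto -y_1$, $y_2\mapsto y_2$ has invariant algebra spanned by $1,\,y_2,\,y_1^2,\,y_1^2y_2$, so every even graded piece of the invariants is one-dimensional; what fails is not any dimension count but generation in degree $2$, since the degree-$2$ invariant satisfies $y_2^2=0$.

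What actually closes the argument --- and what is missing from your proposal --- is the multiplicative structure combined with arithmetic. In the paper's proof, the $G$-invariant degree-$2$ class $x$ must satisfy $x^n\neq 0$, hence must involve every remaining factor, and Corollary \ref{inducedaction} (elements of $G$ act on the $y_i$ by roots of unity and permutations) forces $G$ to permute the $y_i$ transitively, since otherwise proper sub-sums of $x$ would be invariant, contradicting $h^2(\reg_X)=1$; this gives $\widehat X\cong Y^\ell$. Then the two summands of $x^2=\sum_\alpha y_\alpha^2+2\sum_{\alpha\neq\beta}y_\alpha y_\beta$ are separately $G$-invariant, so $h^4(\reg_X)=1$ forces one of them to vanish, which is only possible if $Y\in\Kthree$ and hence $\ell=n$. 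Finally the Euler-characteristic identity $\ord(G)\cdot(n+1)=\chi(\reg_{\widehat X})=2^n$ together with $n\mid\ord(G)$ (from transitivity) yields $n\mid 2^n$ and $n+1\mid 2^n$, impossible for $n\ge 2$. Note that this is exactly the pattern used again for Proposition \ref{k=4prop}, where no Lefschetz shortcut is available either. Without some version of this ring-theoretic and divisibility argument, your several-factor case remains open: dimension counts and Lefschetz numbers alone do not distinguish the hyperk\"ahler situation from the free quotients that the rest of the paper is devoted to constructing.
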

Indeed, the structure sheaf of a hyperk\"ahler manifold is one of the well-known examples of a $\IP^n$-object; see \cite[Ex.\ 1.3(ii)]{HT}. The fact that $\Ho^*(\reg_X)$ also characterises the compact hyperk\"ahler manifolds follows from \cite[Prop.\ A.1]{HuyNW}. 

Inspired by this, we study the class of compact K\"ahler manifolds $X$ with the property that $\reg_X\in \D(X)$ is what we call a \textit{$\IP^n[k]$-object}; see Definition \ref{Pnkobjdef}. Concretely, this means:
\begin{enumerate}
 \item[(C1)] The canonical line bundle $\omega_X$ is trivial,
 \item[(C2)] There is an isomorphism of $\IC$-algebras $\Ho^*(\reg_X)\cong \IC[x]/x^{n+1}$ with $\deg x = k$.
\end{enumerate}
By Serre duality, such a manifold is of dimension $\dim X=\deg(x^n)=n\cdot k$.
For $n=1$, we get exactly the strict Calabi--Yau manifolds while for $k=2$ we get the hyperk\"ahler manifolds. 

In this paper, we will study the case of higher $n$ and $k$. We construct examples and prove some structure results.
If $\reg_X$ is a $\IP^n[k]$-object with $k>2$, the manifold $X$ is automatically projective; see Lemma \ref{algebraiclemma}. Hence, we will call $X$ a \textit{variety with $\IP^n[k]$-unit.}
The main results of this paper can be summarised as
\begin{theorem}\label{main}
Let $n+1=p^\nu$ be a prime power. Then the following are equivalent:
\begin{enumerate}
 \item There exists a variety with $\IP^n[4]$-unit,
\item There exists a variety with $\IP^n[k]$-unit for every even $k$,
\item There exists a strict Enriques variety of index $n+1$.
\end{enumerate}
For $n+1$ arbitrary, the implications (iii)$\Longrightarrow$(ii)$\Longrightarrow$(i) are still true.
\end{theorem}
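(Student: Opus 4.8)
The plan is to establish the three implications as a cycle, with the extra claim that (iii)$\Longrightarrow$(ii)$\Longrightarrow$(i) holds unconditionally, the prime-power hypothesis being needed only for (i)$\Longrightarrow$(iii). Throughout, the governing principle is that $\reg_X$ being a $\IP^n[k]$-object is equivalent to the two conditions (C1) and (C2), so all arguments reduce to controlling the triviality of $\omega_X$ together with the graded ring structure of $\Ho^*(\reg_X)$. The first step is to understand how an \emph{Enriques variety of index $d=n+1$} gives rise to these data. By definition such a variety is a quotient $Y=\widetilde Y/G$ of a variety $\widetilde Y$ with $\Ho^*(\reg_{\widetilde Y})$ as simple as possible (torus or strict Calabi--Yau behaviour) by a free action of a cyclic group $G\cong\Z/d$ acting on the canonical bundle by a primitive $d$-th root of unity. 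My expectation is that the finite \'etale cover $\widetilde Y\to Y$ pushes the cohomology $\Ho^*(\reg_{\widetilde Y})$ forward and decomposes it into $G$-eigenspaces, and that the invariant part builds up exactly the truncated polynomial algebra $\IC[x]/x^{n+1}$ in the required degree.

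For the implication (iii)$\Longrightarrow$(ii), I would take a strict Enriques variety of index $n+1$ and manufacture, for each even $k$, a variety whose homological unit is $\IC[x]/x^{n+1}$ with $\deg x=k$. The natural mechanism is to pass from the covering $\widetilde Y$ (whose structure sheaf should be spherical or $\IP$-like) to products or to the quotient, using a K\"unneth-type computation to raise the degree of the generator from $2$ up to an arbitrary even $k$ while preserving the nilpotency index. Concretely, I would look at $(\widetilde Y)^{m}$ or a suitable twisted product with a diagonal $G$-action, so that the degree of the generator scales and the $G$-invariants still produce a single generator of order $n+1$. The step (ii)$\Longrightarrow$(i) is then trivial: specialise $k=4$.

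The substantive content, and the main obstacle, is the reverse implication (i)$\Longrightarrow$(iii), where the prime-power hypothesis $n+1=p^\nu$ enters. Here I start from a variety $X$ with $\IP^n[4]$-unit and must reconstruct a strict Enriques variety. The strategy is to apply the Beauville--Bogomolov decomposition to an \'etale cover $X'\to X$: since $\omega_X$ is trivial and $c_1(X)=0$, $X'$ splits as a product of a torus, hyperk\"ahler factors, and simply connected strict Calabi--Yau factors. The graded ring $\Ho^*(\reg_{X'})$ is the tensor product of the homological units of the factors, so the constraint that $\Ho^*(\reg_X)=\IC[x]/x^{n+1}$ with $\deg x=4$ forces strong restrictions on which factors may appear and on the deck group $G=\pi_1(X)/\pi_1(X')$ acting on this tensor product. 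The prime-power condition is what lets me control $G$: a truncated polynomial algebra $\IC[x]/x^{p^\nu}$ arising as the invariants of a tensor product under a finite group should force $G$ to be cyclic of order $p^\nu$ acting freely, which is precisely the defining data of a strict Enriques variety of index $n+1$. The delicate point is to show that the eigenspace decomposition of the cohomology under $G$ is compatible with a \emph{single} algebra generator of the correct order---in other words, that no other group could produce the same invariant ring---and it is exactly here that primality of the nilpotency structure (via $n+1=p^\nu$) is indispensable.
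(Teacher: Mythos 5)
Your overall architecture matches the paper's (produce varieties from Enriques covers for (iii)$\Longrightarrow$(ii), specialise $k=4$ for (ii)$\Longrightarrow$(i), Beauville--Bogomolov for (i)$\Longrightarrow$(iii)), but there is a genuine gap in (i)$\Longrightarrow$(iii), exactly at the point where $n+1=p^\nu$ must be used. You claim the prime-power hypothesis ``forces $G$ to be cyclic of order $p^\nu$ acting freely, which is precisely the defining data of a strict Enriques variety.'' This is off in two ways. First, cyclicity, order $n+1$, and freeness of $G\cong\pi_1(X)$ hold for \emph{arbitrary} $n+1$ (Theorem \ref{4theorem}\ref{narb}); no primality is needed there. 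Second, and more importantly, this is \emph{not} the data of a strict Enriques variety: $X$ has dimension $4n$ and trivial $\omega_X$, whereas a strict Enriques variety of index $n+1$ has dimension $2n$ and is a free quotient of a single hyperk\"ahler manifold $Y\in\HK_{2n}$ (Proposition \ref{classEnriques}\ref{Enriquesquotient}). The missing content --- which is the bulk of the paper's proof --- is to show that the universal cover splits as $\widehat X\cong Y\times Y'$ with $Y,Y'\in \HK_{2n}$ and $G=\langle f\times f'\rangle$, $f,f'$ purely non-symplectic of order $n+1$ (Proposition \ref{k=4prop}, an intricate case analysis via $G$-invariance of K\"unneth summands and Euler-characteristic divisibility contradictions such as $n\mid 2^n$ together with $n+1\mid 2^n$), and then to show that $\langle f\rangle$ or $\langle f'\rangle$ acts freely \emph{on its own factor}, so that $Y/\langle f\rangle$ or $Y'/\langle f'\rangle$ is the desired Enriques variety. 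Freeness of $\langle f\times f'\rangle$ on $Y\times Y'$ does not imply freeness of either factor action (by Lemma \ref{freeautos}, $f^a\times (f')^a$ is fixed point free as soon as one of the two factors is), and it is exactly here, and only here, that the prime-power hypothesis enters: in $\IZ/p^\nu$ any two cyclic subgroups are comparable, so if some $f^a$ and some $(f')^b$ with $p^\nu\nmid a,b$ both had fixed points, then one of $(f\times f')^a$, $(f\times f')^b$ would have a fixed point, a contradiction. Without this step your argument produces no Enriques variety at all. (A further subtlety: the splitting statement fails for $n=2$ --- the paper exhibits $X$ with $\IP^2[4]$-unit whose cover is $Y^2\times Y'$, $Y\in\Kthree$, $Y'\in\HK_4$ --- so that case must be settled by the known existence of index-$3$ strict Enriques varieties.)

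There is also a smaller but real problem in your (iii)$\Longrightarrow$(ii): the proposed ``diagonal $G$-action'' on $(\widetilde Y)^{k}$ does not work. If $\IZ/(n+1)$ acts diagonally on $Y^k$ by $y_i\mapsto \rho y_i$, the invariant subalgebra of $\IC[y_1,\dots,y_k]/(y_1^{n+1},\dots,y_k^{n+1})$ consists of all monomials $y_1^{a_1}\cdots y_k^{a_k}$ with $a_1+\dots+a_k\equiv 0 \bmod (n+1)$; already for $k=2$ and $n+1=3$ this contains the two linearly independent classes $y_1y_2^2$ and $y_1^2y_2$ of the same degree, so the invariants are not of the form $\IC[x]/x^{n+1}$ (and the quotient has non-trivial canonical bundle). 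The group that works is the larger ``sum-zero'' subgroup $\bigl\{f_1^{a_1}\times\dots\times f_k^{a_k}\mid a_1+\dots+a_k\equiv 0 \bmod (n+1)\bigr\}\cong \mu_{n+1}^{k-1}$, equivalently one takes the canonical cover of the product $E_1\times\dots\times E_k$ of $k$ strict Enriques varieties; then the invariants are spanned by $(y_1\cdots y_k)^s$ for $s=0,\dots,n$, which is exactly $\IC[x]/x^{n+1}$ with $\deg x=2k$, as in the paper's Theorem \ref{existencethm}.
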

We do not know whether or not (i)$\Longrightarrow$(iii) is true in general if $n+1$ is not a prime power, but we will prove a slightly weaker statement that holds for arbitrary $n+1$; see Section \ref{k=4Sec}.
In particular, the universal cover of a variety with $\IP^n[4]$-unit, with $n+1$ arbitrary, splits into a product of two hyperk\"ahler varieties; see Proposition \ref{k=4prop}.

Our notion of strict Enriques varieties is inspired by similar notions of higher dimensional analogues of Enriques surfaces due to Boissi\`ere, Nieper-Wi\ss kirchen, and Sarti \cite{BNWS} and Oguiso and Schr\"oer \cite{OSEnriques}.
There are known examples of strict Enriques varieties of index $3$ and $4$. Hence, we get
\begin{cor}\label{existencecor}
For $n=2$ and $n=3$ there are examples of varieties with $\IP^n[k]$-units for every even $k\in \IN$. 
\end{cor}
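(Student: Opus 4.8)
The plan is to deduce the statement directly from Theorem \ref{main}, using only the implication (iii)$\Longrightarrow$(ii), which the theorem asserts for \emph{arbitrary} $n+1$. Thus it suffices, for each of the two values $n=2$ and $n=3$, to exhibit a strict Enriques variety of index $n+1$; the theorem then automatically upgrades this to the existence of a variety with $\IP^n[k]$-unit for every even $k$, which is exactly the assertion of the corollary.

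First I would record that both relevant indices are in fact prime powers: for $n=2$ we have $n+1=3$, and for $n=3$ we have $n+1=4=2^2$. Hence the full equivalence of Theorem \ref{main} is available in both cases, although for the present purpose the single implication (iii)$\Longrightarrow$(ii) already suffices and the prime-power hypothesis is not strictly needed. This reduces the whole corollary to a single existence input, namely the presence of a strict Enriques variety in indices $3$ and $4$.

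The remaining input is the existence of those strict Enriques varieties. Here I would appeal to the known higher-dimensional analogues of Enriques surfaces constructed by Boissi\`ere, Nieper-Wi\ss kirchen and Sarti \cite{BNWS} and by Oguiso and Schr\"oer \cite{OSEnriques}, which supply examples in indices $3$ and $4$. The one point that genuinely needs care is that these constructions satisfy the precise definition of \emph{strict} Enriques variety of the stated index as used in this paper, rather than merely one of the closely related notions in the cited literature; since the paper's definition is only \emph{inspired by} those of \cite{BNWS} and \cite{OSEnriques}, verifying this compatibility of definitions is the sole nontrivial obstacle. Once it is confirmed, feeding the two examples into the implication (iii)$\Longrightarrow$(ii) of Theorem \ref{main} produces varieties with $\IP^2[k]$- and $\IP^3[k]$-units for all even $k$, which completes the proof of Corollary \ref{existencecor}.
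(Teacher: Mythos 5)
Your proposal is correct and follows essentially the same route as the paper: the corollary is obtained by feeding the known strict Enriques varieties of index $3$ and $4$ (Theorem \ref{Enriquesexistence}, whose examples are exactly those appearing in both \cite{BNWS} and \cite{OSEnriques}, as required by Proposition \ref{classEnriques}\ref{eqofdef}) into the implication (iii)$\Longrightarrow$(ii) of Theorem \ref{main}, i.e.\ the construction of Section \ref{mainconstruction}. Your flagged concern about matching the paper's notion of ``strict'' Enriques variety is precisely what the paper resolves via Proposition \ref{classEnriques}\ref{eqofdef}, so the argument is complete.
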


The motivation for this work comes from questions concerning derived categories and the notions are influenced by this. However, in this paper, with the exception Sections \ref{derivedsection} and \ref{autosection}, all results and proofs are also formulated without using the language of derived categories.

All our examples of varieties with $\IP^n[k]$-units are constructed using strict Enriques varieties or, equivalently, hyperk\"ahler varieties together with special automorphisms. It would be very interesting to find methods which allow to construct varieties with $\IP^n[k]$-units directly; maybe as moduli spaces of sheaves on varieties of dimension $k$ with trivial canonical bundle. By the above results, this could give rise to new examples of strict Enriques or even hyperk\"ahler varieties by considering the universal covers.

The paper is organised as follows. In Section \ref{notations}, we fix some notations and conventions. Sections \ref{derivedintro} and \ref{specialobjects} are a very brief introduction into derived categories and some types of objects that occur in these categories. In particular, we introduce the notion of $\IP^n[k]$-objects. 

In Section \ref{HKsection}, we say a few words about compact hyperk\"ahler manifolds. In Section \ref{autoaction}, we discuss automorphisms of Beauville--Bogomolov products and their action on cohomology. This is used in the following Section \ref{HKobssection} in order to give a proof of Observation \ref{HKobs}. This proof is probably a bit easier than the one in \cite[App.\ A]{HuyNW}. More importantly, it allows us to introduce some of the notations and ideas which are used in the later sections. In Section \ref{Enriquessection}, we discuss a class of varieties which we call strict Enriques varieties. There are two different notions of Enriques varieties in the literature (see \cite{BNWS} and \cite{OSEnriques}) and our notion is the intersection of these two; see Proposition \ref{classEnriques}\ref{eqofdef}. In Section \ref{Enriquesstacksection},  we quickly mention a generalisation; namely strict Enriques stacks. 

We give the definition of a variety with a $\IP^n[k]$-unit together with some basic remarks in Section \ref{Pnkdefsection}. Section \ref{nonexamplesection} provides two examples of varieties which look like promising candidates, but ultimately fail to have $\IP^n[k]$-units. In Section \ref{mainconstruction}, we construct series of varieties with $\IP^n[k]$-units out of strict Enriques varieties of index $n+1$. In particular, we prove the implication (iii)$\Longrightarrow$(ii) of Theorem \ref{main}.  

In Section \ref{generalfundamental}, we make some basic observations concerning the fundamental group and the universal cover of varieties with $\IP^n[k]$-units. In Section \ref{k=4Sec}, we specialise to the case $k=4$. We proof that the universal cover of a variety with $\IP^n[4]$-unit is the product of two hyperk\"ahler manifolds of dimension $2n$. Then we proceed to proof the implication (i)$\Longrightarrow$(iii) of Theorem \ref{main} for $n+1$ a prime power. 

Section \ref{furtherremarks} is a collection of some further observations and ideas. In Sections \ref{furtherconst1}, \ref{n=2}, and \ref{furtherconst3}, some further constructions leading to varieties with $\IP^n[k]$-units are discussed. We talk briefly about stacks with $\IP^n[k]$-units in Section \ref{Pnkstacks}. We see that symmetric quotients of strict Calabi--Yau varieties provide examples of stacks with $\IP^n[k]$-units for every $n$ and $k$.
In Section \ref{derivedsection}, we prove that the class of strict Enriques varieties is stable under derived equivalences, and in Section \ref{autosection} we study some derived autoequivalences of varieties with $\IP^n[k]$-units. In the final Section \ref{moduli}, we contemplate a bit about varieties with $\IP^n[k]$-units as moduli spaces.

\smallskip
\textbf{Acknowledgements.} The early stages of this work were done while the author was financially supported by the research grant KR 4541/1-1 of the DFG (German Research Foundation). He thanks Daniel Huybrechts, Marc Nieper-Wi\ss kirchen, S\"onke Rollenske, and Pawel Sosna for helpful discussions and comments.

\section{Notations and preliminaries}

\subsection{Notations and conventions}\label{notations}
\begin{enumerate}
\item Throughout, $X$ will be a compact K\"ahler manifold (often a smooth projective variety).
\item We denote the universal cover by $\widehat X\to X$.
\item\label{cc} If $\omega_X$ is of finite order $m$, we denote the canonical cover by $\pi\colon\widetilde X\to X$. It is defined by the properties that $\omega_{\widetilde X}$ is trivial and $\pi$ is an \'etale Galois cover of degree $m$. We have $\pi_*\reg_{\widetilde X}\cong \reg_X\oplus \omega_X^{-1}\oplus \omega_X^{-2}\oplus \dots \oplus \omega_X^{-(m-1)}$ and
the covering map $\widetilde X\to X$ is the quotient by a cyclic group $G=\langle g\rangle$ with $g\in \Aut(\widetilde X)$ of order $m$.
\item We will usually write graded vector spaces in the form $V^*=\oplus_{i\in \IZ}V^i[-i]$. The \textit{Euler characteristic} is given by the alternating sum $\chi(V^*)=\sum_{i\in \IZ}(-1)^i\dim V^i$.
\item Given a sheaf or a complex of sheaves $E$ and an integer $i\in \IZ$, we write $\Ho^i(X,E)$ for the $i$-th derived functor of global sections. In contrast, $\cH^i(E)$ denotes the cohomology of the complex in the sense kernel modulo image of the differentials.
\item We will usually write for short $\Ho^*(\reg_X)$ instead of $\Ho^*(X,\reg_X)$.
\item\label{HKitem} We write for short $Y\in \HK_{2d}$ to express the fact that $Y$ is a compact hyperk\"ahler manifold of dimension $2d$. In this case, we denote by $y$ a generator of $\Ho^2(\reg_Y)$, i.e.\ $y$ is the complex conjugate of a symplectic form on $Y$. If we just write $Y\in \HK$, this means that $Y$ is a hyperk\"ahler manifold of unspecified dimension. Sometimes, we write $Y\in \Kthree$ instead of $Y\in \HK_2$.
\item We write for short $Z\in \CY_{e}$ to express the fact that $Z$ is a compact simply connected strict Calabi--Yau variety of dimension $e\ge 3$. In this case, we denote by $z$ a generator of $\Ho^e(\reg_Z)$, i.e.\ $z$ is the complex conjugate of a volume form on $Z$. If we just write $Z\in \CY$, this means that $Z$ is a simply connected strict Calabi--Yau variety of unspecified dimension.
\item We denote the connected zero-dimensional manifold by $\pt$.
\item For $n\in \IN$, we denote the symmetric group of permutations of the set $\{1,\dots,n\}$ by $\sym_n$. Given a space $X$ and a permutation $\sigma\in \sym_n$, we denote the automorphism of the cartesian product $X^n$ which is given by the according permutation of components again by $\sigma\in \Aut(X^n)$.  
\item For $n\in \IN$, we denote by $\mu_n\subset \IC^*$ the cyclic group of $n$-th roots of unity.
\item If we write $i\neq j$ as a subscript of a sum, we mean that the sum is indexed by all unordered tuples of distinct $i$ and $j$ (in some index set which is, hopefully, clear from the context). Similarly, a sum $\sum_{i_1\neq i_2\neq \dots\neq i_\ell}$ is meant to summarise terms indexed by unordered $\ell$-tuples of pairwise distinct elements. 
\end{enumerate}
\subsection{Derived categories of coherent sheaves}\label{derivedintro}
As mentioned in the introduction, knowledge of derived categories is not necessary for the understanding of this paper. However, often things can be stated in the language of derived categories in the most convenient way, and questions concerning derived categories motivated this work. Hence, we will give, in a very brief form, some basic definitions and facts.

The derived category is defined as the category of complexes of coherent sheaves localised at the class of quasi-isomorphisms. Hence, the objects of $\D(X)$ are (bounded) complexes of coherent sheaves. The morphisms are morphisms of complexes together with formal inverses of quasi-isomorphisms. In particular, every quasi-isomorphism between complexes becomes an isomorphism in $\D(X)$. The derived category $\D(X)$ is a triangulated category. In particular, there is the shift autoequivalence $[1]\colon \D(X)\to \D(X)$. Given two objects $E,F\in \D(X)$, there is a graded Hom-space $\Hom^*(E,F)=\oplus_i \Hom_{\D(X)}(E,F[i])[-i]$. For $E=F$, this is a graded algebra by the Yoneda product (composition of morphisms). There is a fully faithful embedding $\Coh(X)\hookrightarrow \D(X)$, $A\mapsto A[0]$ which is given by considering sheaves as complexes concentrated in degree zero. Most of the time, we will denote $A[0]$ simply by $A$ again. For $A, B\in \Coh(X)$, we have $\Hom^*(A,B)\cong \Ext^*(A,B)$. Besides the shift functor, the data of a triangulated category consists of a class of distinguished triangles $E\to F\to G\to E[1]$ consisting of objects and morphisms in $\D(X)$ satisfying certain axioms. In particular, every morphism $f\colon E\to F$ in $\D(X)$ can be completed to a distinguished triangle \[E\xrightarrow{f} F\to G\to E[1]\,.\] The object $G$ is determined by $f$ up to isomorphism and denoted by $G=\cone(f)$. There is a long exact cohomology sequence
\[
 \dots\to \cH^{i-1}(\cone(f))\to \cH^i(E)\to \cH^i(F)\to \cH^i(\cone(f))\to \cH^{i+1}(E)\to \dots\,.
\]
\subsection{Special objects of the derived category}\label{specialobjects}
In the following, we will recall the notions of exceptional, spherical and $\IP$-objects in the derived category $\D(X)$ of coherent sheaves on a compact K\"ahler manifold $X$. Exceptional objects can be used in order to decompose derived categories while spherical and $\IP$-objects induce autoequivalences. Our main focus in this paper, however, will be to characterise varieties where $\reg_X\in \D(X)$ is an object of one of these types.

\begin{definition}\label{exceptionalobj}
An object $E\in\D(X)$ is called \textit{exceptional} if $\Hom^*(E,E)\cong \IC[0]$.  
\end{definition}
Let $X$ be a Fano variety, i.e.\ the anticanonical bundle $\omega_X^{-1}$ is ample.
Then, by Kodaira vanishing, every line bundle on $X$ is exceptional when considered as an object of the derived category $\D(X)$; see also Remark \ref{regspecial}. Similarly, every line bundle on an Enriques surface is exceptional. Another typical example of an exceptional object is the structure sheaf $\reg_C\in \D(S)$ of a $(-1)$-curve $\IP^1\cong C\subset S$ on a surface.  
 \begin{definition}[\cite{ST}]
 An object $E\in\D(X)$ is called \textit{spherical} if 
 \begin{enumerate}
  \item $E\otimes \omega_X\cong E$,
  \item $\Hom^*(E,E)\cong \IC[0]\oplus \IC[\dim X]\cong \Ho^*(\mathbb S^{\dim X}, \IC)$.
 \end{enumerate}
 \end{definition}
Every line bundle on a strict Calabi--Yau variety is spherical. Another typical 
example of a spherical object is the structure sheaf $\reg_C\in \D(S)$ of a $(-2)$-curve $\IP^1\cong C\subset S$ on a surface.  
\begin{definition}[\cite{HT}]
Let $n\in \IN$. An object $E\in\D(X)$ is called \textit{$\IP^n$-object} if 
\begin{enumerate}
  \item $E\otimes \omega_X\cong E$,
  \item There is an isomorphism of $\IC$-algebras $\Hom^*(E,E)\cong \IC[x]/x^{n+1}$ with $\deg x=2$.
 \end{enumerate}
 \end{definition}
Condition (ii) can be rephrased as $\Hom^*(E,E)\cong \Ho^*(\mathbb P^{n}, \IC)$. As we will see in the next subsection, every line bundle on a compact hyperk\"ahler manifold is a $\IP$-object. Another typical example is the structure sheaf of the centre of a Mukai flop. 
\begin{definition}\label{Pnkobjdef}
 Let $n,k\in \IN$. An object $E\in \D(X)$ is called \textit{$\IP^n[k]$-object} if 
\begin{enumerate}
\item $E\otimes \omega_X\cong E$,
\item There is an isomorphism of $\IC$-algebras $\Hom^*(E,E)\cong \IC[x]/x^{n+1}$
with $\deg x=k$.
\end{enumerate} 
\end{definition}
\begin{remark}\label{Pobjremark}
If there is a $\IP^n[k]$-object $E\in \D(X)$, we have $\dim X=n\cdot k$ by Serre duality.
\end{remark}
\begin{remark}
For $n=1$, the $\IP^1[k]$-objects coincide with the spherical objects. For $k=2$, the $\IP^n[2]$-objects are exactly the $\IP^n$-objects in the sense of Huybrechts and Thomas. 
\end{remark}

The names spherical and $\IP$-objects come from the fact that their graded endomorphism algebra coincides with the cohomology of spheres and projective spaces, respectively. Hence, it would be natural to name $\IP^n[k]$-object by series of manifolds whose cohomology is of the form $\IC[x]/x^{n+1}$ with $\deg x=k$. For $k=4$, there are the quaternionic projective spaces. For $k>4$, however, there are no such series. Hence, we will stick to the notion of $\IP^n[k]$-objects which is justified by the following 
\begin{remark}\label{inducedautoremark}
A $\IP^n[k]$-object is essentially the same as a $\IP$-functor (see \cite{Add}) $\D(\pt)\to \D(X)$ with $\IP$-cotwist $[-k]$. In particular, as we will further discuss in Section \ref{autosection}, it induces an autoequivalence of $\D(X)$.
\end{remark}

\begin{remark}\label{regspecial}
Given a compact K\"ahler manifold $X$, the following are equivalent:
\begin{enumerate}
 \item $\reg_X$ is exceptional (a $\IP^n[k]$-object).
 \item Every line bundle on $X$ is exceptional (a $\IP^n[k]$-object).
 \item Some line bundle on $X$ is exceptional (a $\IP^n[k]$-object). 
\end{enumerate}
Indeed, for every line bundle $L$ on $X$, we have isomorphisms of $\IC$-algebras
\[\Hom^*(L,L)\cong \Hom^*(\reg_X,\reg_X)\cong \Ho^*(\reg_X)\] where the latter is an algebra by the cup product. Furthermore, $L\otimes \omega_X\cong L$ holds if and only if $\omega_X$ is trivial.
\end{remark}

\section{Hyperk\"ahler and Enriques varieties}
In this section, we first review some results on hyperk\"ahler manifolds and their automorphisms. In particular, we give a proof of Observation \ref{HKobs}, i.e.\ the fact that hyperk\"ahler manifolds can be characterised by the property that the trivial line bundle is a $\IP$-object. Then we introduce and study strict Enriques varieties. They are a generalisation of Enriques surfaces to higher dimensions and can be realised as quotients of hyperk\"ahler varieties. 

\subsection{Hyperk\"ahler manifolds}\label{HKsection}

Let $X$ be a compact K\" ahler manifold of dimension $2n$. We say that $X$ is \textit{hyperk\"ahler} if and only if its Riemannian holonomy group is the symplectic group $\Sp(n)$.   
A compact K\"ahler manifold $X$ is hyperk\"ahler if and only if it is \textit{irreducible holomorphic symplectic} which means that it is simply connected and $\Ho^2(X,\wedge^2\omega_X)$ is spanned by an everywhere non-degenerate $2$-form, called \textit{symplectic form}; see e.g.\ \cite{HuyHKbook}.

The structure sheaf of a hyperk\"ahler manifold is a $\IP^n$-object; see \cite[Ex.\ 1.3(ii)]{HT}. This means that the canonical bundle $\omega_X=\wedge^{2n}\Omega_X$ is trivial and $\Ho^*(\reg_X)=\IC[x]/x^{n+1}$; compare Item \ref{HKitem} of Section \ref{notations}. This follows essentially from the holonomy principle together with Bochner's principle. We will see in Section \ref{HKobssection} that also the converse holds, which amounts to Observation \ref{HKobs}.

%A good reference for the theory compact hyperk\"ahler manifolds is \cite{}

\subsection{Automorphisms and their action on cohomology}\label{autoaction}
In the later sections, we will often deal with automorphisms of Beauville--Bogomolov covers. 
There is the following result of Beauville \cite[Sect.\ 3]{Beasome}. 
\begin{lemma}\label{productauto}
 Let $X'\cong \prod_i Y_i^{\lambda_i}\times \prod_j Z_j^{\nu_j}$ be a finite product with $Y_i\in \HK_{2d_i}$ and $Z_j\in \CY_{e_j}$ such that the $Y_i$ and $Z_j$ are pairwise non-isomorphic. Then, every automorphism of $X'$ preserves the decomposition up to permutation of factors. More concretely, every automorphism $f\in \Aut(X')$ is of the form $f=\prod f_{Y_i^{\lambda_i}} \times\prod f_{Z_j^{\nu_j}}$ with $f_{Y_i^{\lambda_i}}\in \Aut(Y_i^{\lambda_i})$ and $f_{Z_j^{\nu_j}}\in \Aut(Z_j^{\nu_j})$. Furthermore, $f_{Y_i^{\lambda_i}}=(f_{Y_{i1}}\times \dots \times f_{Y_{i\lambda_i}})\circ \sigma_{Y_i,f}$ with $f_{Y_{i\alpha}}\in \Aut(Y_i)$ and $\sigma_{Y_i,f}\in \sym_{\lambda_i}$. Similarly, $f_{Z_j^{\nu_i}}=(f_{Z_{j1}}\times \dots \times f_{Z_{i\nu_i}})\circ \sigma_{Z_j,f}$ with $f_{Z_{j\beta}}\in \Aut(Z_i)$ and $\sigma_{Z_j,f}\in \sym_{\nu_i}$. 
\end{lemma}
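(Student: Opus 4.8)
The plan is to reconstruct the individual factors of the product from the complex geometry of $X'$ alone, using spaces of global holomorphic forms, and then to show that an arbitrary $f\in\Aut(X')$ must permute these factors. Two facts are used throughout. First, all the $Y_i$ and $Z_j$ are simply connected, so $\Ho^0(X',\Omega^1_{X'})=0$ and there is no torus factor to interfere with the computation of holomorphic forms. Second, for any biholomorphism $f$ the pullback $f^*$ is a grading- and wedge-preserving automorphism of $\bigoplus_p\Ho^0(X',\Omega^p_{X'})$ which preserves the pointwise rank, and more generally the pointwise decomposability, of forms, because $df_x$ is a linear isomorphism at every point $x$.

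First I treat the hyperk\"ahler factors by means of holomorphic $2$-forms. Since $\Ho^0(\Omega^1)=0$ on every factor and $\Ho^0(Z_j,\Omega^2_{Z_j})=0$ because $e_j\ge 3$, K\"unneth gives that $\Ho^0(X',\Omega^2_{X'})$ has as a basis the pulled-back symplectic forms $s_{i\alpha}$, one for each of the $\lambda_i$ copies of each $Y_i$. Each $s_{i\alpha}$ has constant rank $2d_i$, and a combination $\sum c_{i\alpha}s_{i\alpha}$ has rank $\sum_{c_{i\alpha}\neq 0}2d_i$ because the forms are supported on mutually independent factors; hence the lines $\IC\,s_{i\alpha}$ are exactly the rank-indecomposable ones, those that cannot be split as a sum of two nonzero forms whose ranks add. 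As $f^*$ preserves this intrinsic notion it permutes the set $\{\IC\,s_{i\alpha}\}$, so $df$ permutes the subbundles $\ker s_{i\alpha}\subset T_{X'}$, and therefore also the factor directions $TY_{i\alpha}=\bigcap_{(i',\alpha')\neq(i,\alpha)}\ker s_{i'\alpha'}$ together with their common kernel $\bigcap_{i,\alpha}\ker s_{i\alpha}=T\big(\prod_j Z_j^{\nu_j}\big)$.

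Consequently both the foliation by hyperk\"ahler leaves and the foliation with leaf $\prod_j Z_j^{\nu_j}$ are preserved by $f$, so $f$ respects the product $X'=\big(\prod_i Y_i^{\lambda_i}\big)\times\big(\prod_j Z_j^{\nu_j}\big)$ and splits as $f=f_{HK}\times f_{CY}$ (using the elementary fact that a biholomorphism of a product preserving both factor foliations is itself a product map). The induced permutation of the $TY_{i\alpha}$ carries each leaf $Y_{i\alpha}$ biholomorphically onto another such leaf, which forces the two corresponding $Y$'s to be isomorphic; as the $Y_i$ are pairwise non-isomorphic, this permutation preserves the block of copies of each fixed $Y_i$ and restricts there to some $\sigma_{Y_i,f}\in\sym_{\lambda_i}$. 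Finally, composing $f$ with the projection onto a copy of $Y_i$ yields a map that is constant along all but one factor foliation, hence factors through a single projection via an automorphism $f_{Y_{i\alpha}}\in\Aut(Y_i)$; assembling these gives exactly the asserted form $f_{Y_i^{\lambda_i}}=(f_{Y_{i1}}\times\dots\times f_{Y_{i\lambda_i}})\circ\sigma_{Y_i,f}$.

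The Calabi--Yau part $f_{CY}\in\Aut\big(\prod_j Z_j^{\nu_j}\big)$ is handled by the same scheme with top-degree forms in place of $2$-forms. Since $\Ho^0(Z_j,\Omega^p_{Z_j})$ vanishes for $0<p<e_j$ and is spanned by the volume form $z_{j\beta}$ for $p=e_j$, the algebra $\bigoplus_p\Ho^0(\Omega^p)$ of the product is the exterior algebra on the lines $\IC\,z_{j\beta}$. The graded algebra automorphism $f_{CY}^*$ permutes the indecomposable generators up to scalars within each degree $e_j$, and the fact that a sum of volume forms of distinct $e$-dimensional factors is never a decomposable exterior form for $e\ge 2$ upgrades this to a genuine permutation of the lines $\IC\,z_{j\beta}$. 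Recovering $TZ_{j\beta}=\bigcap_{(j',\beta')\neq(j,\beta)}\ker z_{j'\beta'}$ and repeating the leaf-and-projection argument produces the wreath-product form for $f_{CY}$ as well. The main obstacle is exactly the presence of isomorphic factors, i.e.\ multiplicities $\lambda_i,\nu_j>1$: the de Rham (holonomy) decomposition recovers only the isotypic blocks, and it is the rank/decomposability refinement of the holomorphic-forms argument that is needed to pin down the individual factor directions rather than diagonally embedded copies.
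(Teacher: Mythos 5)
Your overall strategy (recover the factor directions as intrinsic data attached to holomorphic forms, then integrate the resulting distributions) is reasonable, and in the purely hyperk\"ahler case (all $\nu_j=0$) your argument is essentially complete. But there is a genuine gap as soon as Calabi--Yau factors are present. Every pulled-back symplectic form $s_{i'\alpha'}$ vanishes on the whole distribution $K=T\bigl(\prod_j Z_j^{\nu_j}\bigr)$, so your identity $TY_{i\alpha}=\bigcap_{(i',\alpha')\neq(i,\alpha)}\ker s_{i'\alpha'}$ is false: the right-hand side equals $TY_{i\alpha}\oplus K$. Consequently what your $2$-form argument actually shows is that $df$ preserves $K$ and permutes the distributions $TY_{i\alpha}\oplus K$; it never shows that $df$ preserves $T\bigl(\prod_i Y_i^{\lambda_i}\bigr)$, which is exactly the half of the splitting $f=f_{HK}\times f_{CY}$ saying that the $\prod_j Z_j^{\nu_j}$-component of $f$ is independent of the $\prod_i Y_i^{\lambda_i}$-variable. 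This is not a presentational slip: holomorphic $2$-forms can never cut out $T\bigl(\prod_i Y_i^{\lambda_i}\bigr)$, because they all contain $K$ in their kernels and hence cannot distinguish that distribution from any graph-type perturbation of it into $K$. Two standard repairs: (a) run the decomposability analysis on all of $X'$ simultaneously for the $2$-forms and for the pulled-back volume forms $z_{j\beta}$, so that $T\bigl(\prod_i Y_i^{\lambda_i}\bigr)=\bigcap_{j,\beta}\ker z_{j\beta}$ is also $df$-invariant; or (b) after descending $f$ along the $K$-foliation to $\bar f\in\Aut\bigl(\prod_i Y_i^{\lambda_i}\bigr)$, write $f(a,b)=(\bar f(a),h(a,b))$ and use that the factors carry no holomorphic vector fields ($\Ho^0(T_{Y_i})\cong\Ho^0(\Omega^1_{Y_i})=0$ and $\Ho^0(T_{Z_j})\cong\Ho^0(\Omega^{e_j-1}_{Z_j})=0$), so that $\Aut\bigl(\prod_j Z_j^{\nu_j}\bigr)$ is discrete and the holomorphic family $a\mapsto h(a,\cdot)$ over the connected base is constant.

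There is a second, smaller gap in your Calabi--Yau step. In degree $e_j$ the space of holomorphic forms on $\prod_j Z_j^{\nu_j}$ contains, besides the generators $z_{j\beta}$, the monomials $z_{j_1\beta_1}\wedge\dots\wedge z_{j_r\beta_r}$ with $e_{j_1}+\dots+e_{j_r}=e_j$ (e.g.\ the product of two threefold volume forms has the same degree as a sixfold volume form), and such monomials \emph{are} pointwise decomposable; your stated fact that sums of volume forms are indecomposable does not exclude $f^*z_{j\beta}$ being such a monomial with $r\ge 2$, nor a monomial plus corrections. The step can be completed by combining two ingredients: pointwise decomposability (via the criterion that a nonzero $p$-form is decomposable iff its contraction kernel has codimension exactly $p$, which forces $f^*z_{j\beta}$ to be a multiple of a single monomial) and the algebra structure ($z_{j\beta}$ does not lie in the square of the augmentation ideal of $\oplus_p\Ho^0(\Omega^p)$, and an algebra automorphism preserves that ideal and its square, which rules out $r\ge 2$). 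Finally, note that the paper itself gives no proof of this lemma but quotes it from Beauville, so a self-contained argument is welcome --- but as written yours does not yet establish the separation of the Calabi--Yau block from the hyperk\"ahler one.
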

Let $X'\cong \prod_i Y_i^{\mu_i}\times \prod_j Z_j^{\nu_j}$ as above. For $\alpha=1,\dots, \mu_i$ we denote by $y_{i\alpha}\in \Ho^2(\reg_{X'})$ the image of $y_i\in \Ho^2(\reg_{Y_i})$ under pull-back along the projection $X'\to Y_i$ to the $\alpha$-th $Y_i$ component; compare Item \ref{HKitem} of Section \ref{notations}. For $\beta=1,\dots,\nu_j$, the class $z_{j\beta}$ is defined analogously. By the K\"unneth formula, the $y_{i\alpha}$ and $z_{j\beta}$ together generate the cohomology $\Ho^*(\reg_{X'})$ and we have
\begin{align}\label{productcoh}
\Ho^*(\reg_{X'})=\IC[\{y_{i\alpha} \}_{i\alpha}, \{z_{j\beta}\}_{j\beta}]/(y_{i\alpha}^{d_i}, z_{j\beta}^2)\,.
\end{align}
Let $Y\in \HK$. The action of automorphisms on $\Ho^2(\reg_X)\cong \IC$ defines a group character which we denote by
\[
 \rho_Y\colon \Aut(Y)\to \IC^*\quad ,\quad f\mapsto \rho_{Y,f}\,. 
\]
In particular, an automorphism $f\in \Aut(Y)$ of finite order $\ord f=m$ acts on $\Ho^2(\reg_X)$ by multiplication by an $m$-th root of unity $\rho_{Y,f}\in \mu_m$. Similarly, for $Z\in \CY_k$ we have a character $\rho_Z\colon \Aut(Z)\to \IC^*$ given by the action of automorphisms on $\Ho^k(\reg_Z)$.  

\begin{cor}\label{inducedaction}
Let $f\in \Aut(X')$ be of finite order $d$. Then the induced action of $f$ on cohomology is given by permutations of the $y_{i\alpha}$ with fixed $i$ and the $z_{j\beta}$ with fixed $j$ together with multiplications by $d$-th roots of unity. This means
\[f\colon \quad y_{i\alpha}\mapsto \rho_{Y_{i\alpha}, f_{Y_{i\alpha}}}\cdot y_{i\sigma_{Y_i,f}(\alpha)}\quad, \quad z_{j\beta}\mapsto \rho_{Z_{j\beta},f_{Z_{j\beta}}}\cdot z_{j\sigma_{Z_j,f}(\beta)}\]
with $\rho_{Y_{i\alpha},f},\rho_{Z_{j\beta},f}\in \mu_d$.
\end{cor}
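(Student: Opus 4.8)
The plan is to deduce the statement directly from the product decomposition of $f$ provided by Lemma \ref{productauto} together with the description of the cohomology ring in \eqref{productcoh}. First I would observe that pull-back $f^*$ is a homomorphism of graded $\IC$-algebras, so by \eqref{productcoh} it is completely determined once its values on the algebra generators $y_{i\alpha}$ and $z_{j\beta}$ are known. Hence it suffices to compute $f^*y_{i\alpha}$ and $f^*z_{j\beta}$.

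For this I would use naturality of pull-back under composition. By definition $y_{i\alpha}=\pr_{i\alpha}^*y_i$, where $\pr_{i\alpha}\colon X'\to Y_i$ is the projection onto the $\alpha$-th copy of $Y_i$, so that $f^*y_{i\alpha}=(\pr_{i\alpha}\circ f)^*y_i$. The key step is to identify the morphism $\pr_{i\alpha}\circ f$. Since the $Y_i$ and $Z_j$ are pairwise non-isomorphic, Lemma \ref{productauto} guarantees that $f$ preserves the $Y_i$-block and that $f_{Y_i^{\mu_i}}=(f_{Y_{i1}}\times\dots\times f_{Y_{i\mu_i}})\circ\sigma_{Y_i,f}$; composing with the projection therefore yields $\pr_{i\alpha}\circ f=f_{Y_{i\alpha}}\circ \pr_{i\sigma_{Y_i,f}(\alpha)}$, the permutation reindexing the projection and the factor automorphism acting afterwards. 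Feeding this back in gives $f^*y_{i\alpha}=\pr_{i\sigma_{Y_i,f}(\alpha)}^*\,f_{Y_{i\alpha}}^*y_i=\rho_{Y_i,f_{Y_{i\alpha}}}\cdot y_{i\sigma_{Y_i,f}(\alpha)}$, because $f_{Y_{i\alpha}}$ acts on the line $\Ho^2(\reg_{Y_i})=\IC y_i$ by the scalar $\rho_{Y_i,f_{Y_{i\alpha}}}$ by definition. The classes $z_{j\beta}=\pr_{j\beta}^*z_j$ are treated verbatim, replacing $\Ho^2$ by $\Ho^{e_j}$ and $\rho_Y$ by $\rho_Z$. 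This already yields the stated monomial form of the action.

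It then remains to show that the scalars lie in $\mu_d$. Here I would use that $f$ has finite order $d$, so $(f^*)^d=\id$ on $\Ho^*(\reg_{X'})$. Iterating the formula above, $f^*$ moves $y_{i\alpha}$ around the $\sigma_{Y_i,f}$-orbit of $\alpha$ while multiplying by the corresponding scalars; after $d$ applications $y_{i\alpha}$ returns to itself, so the product of the scalars collected over these $d$ steps equals $1$, which forces the product taken once around each orbit to be a root of unity. Combined with the fact that an automorphism of a hyperk\"ahler (resp.\ strict Calabi--Yau) manifold acts on the one-dimensional space $\Ho^2(\reg_{Y_i})$ (resp.\ $\Ho^{e_j}(\reg_{Z_j})$) by a root of unity, the image of $\Aut\to\IC^*$ being finite, this identifies each individual scalar as a root of unity of the order dictated by $d$.

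I expect the main obstacle to be exactly this last paragraph: the monomial shape of the action is essentially bookkeeping, but controlling the \emph{order} of the scalars genuinely uses the geometry of the factors, namely that automorphisms of hyperk\"ahler and Calabi--Yau manifolds act on the period line through roots of unity, and some care is needed in deducing that an individual scalar, rather than merely the product around an orbit, is constrained by $d$. A secondary, purely notational point is to fix consistently the convention by which a permutation $\sigma_{Y_i,f}\in\sym_{\mu_i}$ acts on the factors, so that the index comes out as $\sigma_{Y_i,f}(\alpha)$ and not its inverse.
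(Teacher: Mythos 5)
Your first two paragraphs reproduce what is, in effect, the paper's own proof: Corollary \ref{inducedaction} is stated there with no written argument, as immediate bookkeeping from Lemma \ref{productauto}, and your computation $f^*y_{i\alpha}=(\pr_{i\alpha}\circ f)^*y_i$ together with $\pr_{i\alpha}\circ f=f_{Y_{i\alpha}}\circ\pr_{i\sigma^{-1}(\alpha)}$ (modulo the permutation-versus-inverse convention you yourself flag) is exactly that bookkeeping. This part is correct.

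The genuine gap is in your last paragraph, i.e.\ at precisely the point you suspected. The $d$-fold iteration argument controls only the \emph{product} of the scalars around each $\sigma_{Y_i,f}$-cycle: if the cycle has length $\ell$ (so $\ell\mid d$), then $(f^*)^d=\id$ says that this product is a $(d/\ell)$-th root of unity, and nothing more about individual scalars. The extra input you invoke, finiteness of the image of $\rho_{Y_i}\colon\Aut(Y_i)\to\IC^*$, is neither available nor sufficient. Not available: in this section the factors are only compact K\"ahler, and a non-projective K3 surface can carry an automorphism $g$ for which $\rho_{Y,g}$ is not a root of unity (McMullen's Siegel-disk examples), so the image can be infinite. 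Not sufficient: membership in some $\mu_N$ does not give membership in $\mu_d$. Concretely, for $Y\in\Kthree$ and arbitrary $g\in\Aut(Y)$, the automorphism $f=(g\times g^{-1})\circ(1\,\,2)\in\Aut(Y\times Y)$ has order $2$, yet $f^*y_1=\rho_{Y,g}\,y_2$ and $f^*y_2=\rho_{Y,g}^{-1}\,y_1$ with $\rho_{Y,g}$ arbitrary; so with the paper's rigid normalization (all $y_{i\alpha}$ pulled back from one chosen generator $y_i$) the literal claim $\rho_{Y_{i\alpha},f}\in\mu_d$ can fail on permuted factors. What is true, and what a careful proof should say, is: (a) if $\sigma_{Y_i,f}(\alpha)=\alpha$, then the $\alpha$-component of $f^d=\id$ gives $f_{Y_{i\alpha}}^d=\id$, hence $\rho_{Y_i,f_{Y_{i\alpha}}}\in\mu_d$; (b) for a cycle of length $\ell$, the composition of the factor automorphisms around the cycle has order dividing $d/\ell$, so the cycle product of the scalars lies in $\mu_d$, and after rescaling the generators $y_{i\alpha}$ along the cycle (choose $c$ with $c^\ell$ equal to the cycle product, so that $c^d=1$) every individual scalar can be taken in $\mu_d$. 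This normalized monomial form, rather than the literal statement, is also all that the later applications (Proposition \ref{HKchar}, Theorem \ref{4theorem}) actually use: there one only needs that $G$-invariant classes split into invariant monomial types, and the character values on factors that are not permuted.
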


The main takeaway for the computations in the latter sections is that the cohomology classes can only be permuted if the corresponding factors of the product coincide.  

\begin{definition}
Let $Y\in \HK$ and $f\in \Aut Y$ of finite order. We call the order of $\rho_{Y,f}\in\IC$ the \textit{symplectic order} of $f$. The reason for the name is that $f$ acts by a root of unity of the same order, namely $\bar\rho_{Y,f}$, on $\Ho^0(\wedge^2\Omega_X)$, i.e.\ on the symplectic forms. 
We say that $f$ is \textit{symplectic} if $\rho_{Y,f}=1$.
In general, the symplectic order divides the order of $f$ in $\Aut(X)$. We say that $f$ is \textit{purely non-symplectic} if its symplectic order is equal to $\ord f$.  
\end{definition}

% \begin{definition}
%  Let $Y\in \HK_{2n}$ and let $f\in \Aut(Y)$ be of order $r$. We say that the automorphisms $f$ is \textit{purely non-symplectic} if $\rho_{Y,f}$ is a primitive $r$-th root of unity. This means that $f$ acts by multiplication by a primitive root of unity on $y\in \Ho^2(\reg_Y)$ or, equivalently, on a symplectic form of $Y$. 
% \end{definition}

\begin{lemma}\label{HKfreeauto}
Let $Y\in \HK_{2n}$ and let $f\in \Aut(Y)$ be an automorphism of finite order $m$ such that the generated group $\langle f\rangle$ acts freely on $Y$. Then $f$ is purely non-symplectic and $m\mid n+1$. Similarly, every fixed point free automorphism of finite order of a strict Calabi--Yau variety is a non-symplectic involution. 
\end{lemma}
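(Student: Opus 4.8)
The plan is to prove the statement about $Y \in \HK_{2n}$ first and then treat the strict Calabi--Yau case. Suppose $f \in \Aut(Y)$ has finite order $m$ and generates a group $\langle f\rangle$ acting freely on $Y$. The key tool is the interplay between the action on $\Ho^*(\reg_Y) \cong \IC[x]/x^{n+1}$ (with $\deg x = 2$) and the topological/holomorphic Lefschetz fixed-point formulas. First I would show that $f$ is purely non-symplectic, i.e.\ that $\rho_{Y,f}$ has order exactly $m$. If not, some power $f^t$ with $1 \le t < m$ would be symplectic, meaning $\rho_{Y,f^t} = 1$. A symplectic automorphism of finite order on a hyperk\"ahler manifold always has fixed points: its holomorphic Lefschetz number $\sum_i (-1)^i \tr\big(f^t \mid \Ho^i(\reg_Y)\big)$ equals $\sum_{i=0}^n \rho_{Y,f^t}^{\,i} = n+1 \neq 0$ when $\rho_{Y,f^t}=1$, so by the holomorphic Lefschetz fixed-point formula $f^t$ cannot act freely, contradicting that $\langle f\rangle$ acts freely. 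Hence every nontrivial power of $f$ is non-symplectic, which forces $\rho_{Y,f}$ to have order exactly $m$, i.e.\ $f$ is purely non-symplectic.

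Next I would derive the divisibility $m \mid n+1$. Since $f$ acts freely, the holomorphic Lefschetz number of $f$ itself must vanish:
\begin{align*}
\sum_{i=0}^{n} \rho_{Y,f}^{\,i} = 0\,.
\end{align*}
Writing $\zeta := \rho_{Y,f}$, which is a primitive $m$-th root of unity by the previous paragraph, this says $1 + \zeta + \zeta^2 + \dots + \zeta^n = 0$. The partial geometric sum $\sum_{i=0}^{n}\zeta^i = \frac{\zeta^{n+1}-1}{\zeta - 1}$ vanishes precisely when $\zeta^{n+1} = 1$, i.e.\ when $m \mid n+1$. This gives exactly the claimed divisibility. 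I should double-check that the same vanishing of the holomorphic Lefschetz number applies to each power $f^t$ as well, which is consistent: for $1 \le t \le m-1$ one gets $\sum_{i=0}^n \zeta^{ti} = 0$, automatically satisfied once $\zeta^{n+1}=1$ and $\zeta^t \neq 1$, so there is no further obstruction.

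For the Calabi--Yau case, let $Z \in \CY_e$ with $e \ge 3$ and let $f$ be a fixed-point-free automorphism of finite order $m$. Here $\Ho^*(\reg_Z) = \IC \oplus \IC\cdot z$ with $\deg z = e$, so the holomorphic Lefschetz number of $f$ is $1 + (-1)^e \rho_{Z,f}$, where $\rho_{Z,f} = \zeta'$ is the scalar by which $f$ acts on $\Ho^e(\reg_Z) \cong \IC$. Freeness forces this number to be zero, giving $(-1)^e \zeta' = -1$. Combined with $f$ being an automorphism of finite order whose only nontrivial cohomological action on $\Ho^*(\reg_Z)$ is through $z$, the constraint $\zeta' = (-1)^{e+1}$ must be a root of unity of order equal to $\ord f$ (purely non-symplectic, by the same holomorphic Lefschetz argument applied to powers), forcing $\zeta' = -1$ and $m = 2$; the only way $(-1)^{e+1} = -1$ is an element of order $2$ is $e$ even, so $f$ is a non-symplectic involution. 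The main obstacle I anticipate is justifying cleanly that $\langle f\rangle$ acting freely forces the holomorphic Lefschetz number of \emph{every} nontrivial power to vanish while simultaneously pinning down that $\rho_{Y,f}$ (resp.\ $\rho_{Z,f}$) is a primitive root of unity; this requires care in separating the topological fixed-point statement (freeness) from the purely algebraic computation of traces on the one-dimensional graded pieces, and in invoking that a finite-order symplectic hyperk\"ahler automorphism necessarily has fixed points.
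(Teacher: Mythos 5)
Your hyperk\"ahler argument is correct and is exactly the route the paper takes: the paper's own proof is a one-line citation of the holomorphic Lefschetz fixed point theorem (\cite[Sect.\ 2.2]{BNWS}), and your fleshed-out version --- freeness forces $L(f^t)=\sum_{i=0}^n \rho_{Y,f}^{ti}=0$ for every nontrivial power, a symplectic power would instead give $L(f^t)=n+1\neq 0$, hence $\rho_{Y,f}$ is a primitive $m$-th root of unity, and then $\sum_{i=0}^n\rho_{Y,f}^i=0$ forces $\rho_{Y,f}^{n+1}=1$, i.e.\ $m\mid n+1$ --- is precisely what that citation unpacks to.

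The Calabi--Yau half, however, has a genuine gap. Your parenthetical claim that $\zeta'=\rho_{Z,f}$ ``must be a root of unity of order equal to $\ord f$ by the same holomorphic Lefschetz argument applied to powers'' fails when $e=\dim Z$ is odd: if $(\zeta')^t=1$ then $L(f^t)=1+(-1)^e(\zeta')^t=1+(-1)^e$, which is $0$ for odd $e$, so freeness of $f^t$ produces no contradiction at all. Your subsequent deduction that $e$ must be even and $m=2$ therefore rests on nothing --- and it cannot be rescued, because the assertion is actually false for odd-dimensional strict Calabi--Yau varieties: the Fermat quintic threefold $\{x_0^5+\dots+x_4^5=0\}\subset\IP^4$ carries the free automorphism $[x_0:x_1:\dots:x_4]\mapsto[x_0:\zeta x_1:\dots:\zeta^4x_4]$ ($\zeta$ a primitive fifth root of unity) of order $5$, which preserves the holomorphic volume form and is certainly not a non-symplectic involution. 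The lemma as stated silently assumes $e$ even --- harmless in the paper, since in every application the odd-dimensional Calabi--Yau factors have already been excluded by Euler-characteristic arguments, but a proof must make this restriction explicit. For even $e$ your argument does close up, and more simply than you phrase it: $L(f)=0$ gives $\zeta'=-1$, and if $m>2$ then $f^2\neq\id$ is also fixed point free, yet $L(f^2)=1+(\zeta')^2=2\neq 0$, a contradiction; hence $m=2$.
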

\begin{proof}
This follows from the holomorphic Lefschetz fixed point theorem; compare \cite[Sect.\ 2.2]{BNWS}.
\end{proof}

\begin{cor}\label{quotientcanonical}
Let $Y\in \HK_{2n}$ and let $X=Y/\langle f\rangle$ be the quotient by a cyclic group of automorphisms acting freely. Then $\omega_X$ is non-trivial and of finite order.
\end{cor}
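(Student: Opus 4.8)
The plan is to use that a free action of the finite cyclic group $G=\langle f\rangle$ makes the quotient map $\pi\colon Y\to X$ an \'etale Galois cover of degree $m:=\ord f$, and then to reduce the triviality and torsion questions for $\omega_X$ to a single computation of the $G$-action on the one-dimensional space of holomorphic volume forms on $Y$. Since $\langle f\rangle$ acts freely we have $m\ge 2$, and by Lemma \ref{HKfreeauto} the automorphism $f$ is purely non-symplectic with $m\mid n+1$; hence $\rho:=\rho_{Y,f}$ is a primitive $m$-th root of unity.

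First I would record that, $\pi$ being \'etale, one has $\omega_Y\cong\pi^*\omega_X$, and since $Y\in\HK_{2n}$ the bundle $\omega_Y$ is trivial. More usefully, for every $j$ the pullback $\pi^*(\omega_X^{\otimes j})\cong\omega_Y^{\otimes j}\cong\reg_Y$ is trivial, so $\Ho^0(X,\omega_X^{\otimes j})=\Ho^0(Y,\omega_Y^{\otimes j})^G$. The space $\Ho^0(Y,\omega_Y^{\otimes j})$ is one-dimensional, spanned by $(\sigma^n)^{\otimes j}$, where $\sigma$ denotes the symplectic form on $Y$; since $f^*\sigma=\bar\rho\,\sigma=\rho^{-1}\sigma$, the generator $f$ acts on this line by multiplication with $\rho^{-nj}$. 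The key step is then the equivalence: $\omega_X^{\otimes j}$ is trivial if and only if the nowhere-vanishing section $(\sigma^n)^{\otimes j}$ is $G$-invariant, i.e.\ if and only if $\rho^{-nj}=1$, i.e.\ if and only if $m\mid nj$. Because $m\mid n+1$ forces $\gcd(m,n)=1$, this holds precisely when $m\mid j$. Taking $j=1$ and using $m\ge 2$ shows that $\omega_X$ is non-trivial, while taking $j=m$ gives $\omega_X^{\otimes m}\cong\reg_X$, so $\omega_X$ is of finite order (in fact of order exactly $m$).

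The main obstacle is the book-keeping in this key step, namely justifying that triviality of the torsion line bundle $\omega_X^{\otimes j}$ is equivalent to $G$-invariance of the descending volume form $(\sigma^n)^{\otimes j}$: one direction needs that a $G$-invariant nowhere-vanishing section upstairs descends to a nowhere-vanishing, hence trivializing, section of $\omega_X^{\otimes j}$, and the other that any trivializing section downstairs pulls back to such an invariant section. This rests on \'etale (Galois) descent for line bundles together with the identification of the $f$-action on the one-dimensional $\Ho^0(Y,\omega_Y^{\otimes j})$ with multiplication by the root of unity $\rho^{-nj}$. Once these points are pinned down, the remainder is the short root-of-unity arithmetic powered by Lemma \ref{HKfreeauto}.
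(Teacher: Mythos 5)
Your proposal is correct and follows essentially the same route as the paper's proof: both reduce the order of $\omega_X$ to the order of the action of $f$ on the one-dimensional space of top forms (equivalently $\Ho^{2n}(\reg_Y)$), i.e.\ to the order of $\rho_{Y,f}^{\pm n}$, and then invoke Lemma \ref{HKfreeauto} to see that $\rho_{Y,f}$ is a primitive $m$-th root of unity with $m\mid n+1$, whence $\gcd(m,n)=1$ and the order is exactly $m\ge 2$. The only difference is that you spell out the \'etale-descent identification $\Ho^0(X,\omega_X^{\otimes j})=\Ho^0(Y,\omega_Y^{\otimes j})^G$, which the paper compresses into the single assertion that the order of $\omega_X$ equals the order of that action.
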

\begin{proof}
 The order of $\omega_X$ is exactly the order of the action of $f$ on $\Ho^{2n}(\reg_X)$, i.e. the order of $\rho_{Y,f}^n\in \IC^*$. By the previous lemma, this order is finite and greater than one. 
\end{proof}

Here is a simple criterion for automorphisms of products to be fixed point free.
\begin{lemma}\label{freeautos}
\begin{enumerate}
 \item\label{freeautos1} Let $X_1,\dots, X_k$ be manifolds and $f_i\in \Aut(X_i)$. Then \[f_1\times \dots\times f_k \in \Aut(X_1\times\dots\times X_k)\] is fixed point free if and only if at least one of the  $f_i$ is fixed point free.
 \item\label{freeautos2} Let $X$ be a manifold and $g_1,\dots, g_k\in \Aut(X)$. Consider the automorphism \[\phi=(g_1\times\dots \times g_k)\circ (1\,\, 2\,\dots\, k)\in \Aut(X^k)\] given by $(p_1, p_2, \dots ,p_k)\mapsto (g_1(p_k), g_2(p_1), \dots, g_k(p_{k-1}))$. Then $\phi$ is fixed point free if and only if the composition $g_k\circ g_{k-1}\circ \dots \circ g_1$ (or, equivalently, $g_i\circ g_{i-1}\circ \dots\circ g_{i+1}$ for some $i=1,\dots k$) is fixed point free. 
\end{enumerate}
\end{lemma}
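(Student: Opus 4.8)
The plan is to prove both parts of Lemma~\ref{freeautos} by a direct analysis of the fixed-point equations, which is the most transparent approach for statements of this elementary combinatorial nature.

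\textbf{Part \ref{freeautos1}.} First I would unwind the definition: a point $(p_1,\dots,p_k)\in X_1\times\dots\times X_k$ is fixed by $f_1\times\dots\times f_k$ if and only if $f_i(p_i)=p_i$ for every $i$. Hence the map $f_1\times\dots\times f_k$ has a fixed point precisely when \emph{every} $f_i$ has a fixed point. Taking the contrapositive, the product is fixed point free if and only if at least one $f_i$ is fixed point free. This direction requires no real work beyond stating it cleanly.

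\textbf{Part \ref{freeautos2}.} Here I would again write out the fixed-point condition explicitly. A point $(p_1,\dots,p_k)$ is fixed by $\phi$ if and only if
\[
p_1=g_1(p_k),\quad p_2=g_2(p_1),\quad \dots,\quad p_k=g_k(p_{k-1})\,.
\]
The key idea is that this is a cyclic system: substituting each equation into the next expresses every $p_i$ in terms of a single coordinate. For instance, substituting repeatedly one obtains $p_1=g_1(p_k)=g_1 g_k(p_{k-1})=\dots=(g_1\circ g_k\circ g_{k-1}\circ\dots\circ g_2)(p_1)$, so $p_1$ is a fixed point of the composition $g_1\circ g_k\circ\dots\circ g_2$. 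Conversely, given a fixed point $q$ of this composition, setting $p_1=q$ and defining $p_2,\dots,p_k$ by the equations $p_{i+1}=g_{i+1}(p_i)$ produces a genuine fixed point of $\phi$, since the last equation $p_1=g_1(p_k)$ is then exactly the statement that $q$ is fixed by the composition. Thus $\phi$ has a fixed point if and only if the relevant $k$-fold composition does, and by the contrapositive $\phi$ is fixed point free if and only if that composition is. I would then remark that the composition starting from any index $i$ (i.e.\ $g_i\circ g_{i-1}\circ\dots\circ g_{i+1}$, indices read cyclically) is conjugate to the one starting from index $1$ via the appropriate $g$'s, so all of them are simultaneously fixed point free or not, which justifies the parenthetical ``or equivalently'' in the statement.

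The main obstacle, if any, is purely bookkeeping: getting the indices and the order of composition in the cyclic substitution exactly right, and being careful that the permutation $(1\,2\,\dots\,k)$ sends coordinate $i$ to coordinate $i+1$ in the way the displayed formula $(p_1,\dots,p_k)\mapsto(g_1(p_k),\dots,g_k(p_{k-1}))$ prescribes. Since no topology or geometry enters—this is a set-theoretic statement about maps of sets—there is no deeper difficulty, and the equivalence of the various cyclic compositions follows because they are conjugate by honest automorphisms and conjugation preserves the property of having a fixed point.
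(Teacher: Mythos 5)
Your proof is correct. The paper in fact states Lemma \ref{freeautos} without any proof (it is introduced merely as ``a simple criterion''), and your direct analysis of the fixed-point equations --- coordinatewise for part \ref{freeautos1}, cyclic substitution plus conjugacy of the rotated compositions for part \ref{freeautos2} --- is precisely the elementary argument the author leaves implicit.
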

We also will frequently use the following well-known fact.
\begin{lemma}\label{EulerGlemma}
 Let $X'$ be a smooth projective variety and let $G\subset \Aut(X')$ be a finite subgroup which acts freely. Then, the quotient variety $X:=X'/G$ is again smooth projective and 
 \[
\chi(\reg_{X'})=\chi(\reg_X)\cdot \ord G\,.  
 \]
 Furthermore, $\Ho^*(\reg_X)=\Ho^*(\reg_{X'})^G$. 
\end{lemma}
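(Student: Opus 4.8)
The key structural fact is that, since the finite group $G$ acts freely, the quotient map $\pi\colon X'\to X$ is an \'etale Galois cover of degree $\ord G$ with Galois group $G$; the plan is to deduce all three assertions from this. Smoothness of $X$ is local: as the action is free, $\pi$ is unramified, hence finite flat and \'etale, so $X$ inherits smoothness from $X'$. For projectivity I would start from an ample line bundle $L$ on $X'$ and pass to the $G$-invariant ample bundle $\bigotimes_{g\in G}g^*L$; this carries a natural $G$-linearisation and, the action being free, descends along $\pi$ to a line bundle on $X$ which is ample because $\pi$ is finite. Hence $X$ is smooth projective.

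For the cohomology statement, I would use that $\pi$ is finite, hence affine, so $\pi_*$ is exact and $\Ho^i(X',\reg_{X'})\cong \Ho^i(X,\pi_*\reg_{X'})$, compatibly with the $G$-actions and with cup products. For an \'etale Galois cover one has $(\pi_*\reg_{X'})^G\cong \reg_X$. Since we work over $\IC$, taking $G$-invariants is exact (it is the image of the averaging idempotent $\frac1{\ord G}\sum_{g\in G}g$), so it commutes with the derived global-section functor. Combining these identifications, all of which are induced by $\pi^*$ and respect the cup product, gives
\[
\Ho^i(X,\reg_X)\cong \Ho^i\big(X,(\pi_*\reg_{X'})^G\big)\cong \Ho^i(X,\pi_*\reg_{X'})^G\cong \Ho^i(\reg_{X'})^G,
\]
which is the claimed isomorphism of graded algebras $\Ho^*(\reg_X)=\Ho^*(\reg_{X'})^G$.

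For the Euler characteristic I would not use invariants but rather the fact that $\pi$ is \'etale, so the relative tangent sheaf vanishes and $\td(X')=\pi^*\td(X)$. Writing $\reg_{X'}=\pi^*\reg_X$ and applying Hirzebruch--Riemann--Roch on $X'$,
\[
\chi(\reg_{X'})=\int_{X'}\ch(\pi^*\reg_X)\,\td(X')=\int_{X'}\pi^*\!\big(\ch(\reg_X)\,\td(X)\big)=\ord G\cdot\int_{X}\ch(\reg_X)\,\td(X)=\ord G\cdot\chi(\reg_X),
\]
where the penultimate step is the degree formula $\int_{X'}\pi^*\alpha=\deg(\pi)\int_X\alpha$ for the finite map $\pi$ of degree $\ord G$ (itself an instance of the projection formula).

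The main obstacle is conceptual rather than computational: one must set up the \'etale Galois descent cleanly, in particular the identity $(\pi_*\reg_{X'})^G\cong\reg_X$ together with the exactness of $G$-invariants, so that taking invariants may legitimately be interchanged with cohomology. It is worth stressing that the factor $\ord G$ in the Euler characteristic is genuinely geometric and does \emph{not} follow from the invariants formula alone, which is why the Riemann--Roch computation, rather than representation theory, is needed for that part.
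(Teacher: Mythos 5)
Your proof is correct, but there is nothing in the paper to compare it against: the paper states this lemma as a ``well-known fact'' and gives no proof at all, so what you have done is supply the standard argument. All three steps are sound. The descended bundle from $\bigotimes_{g\in G}g^*L$ does carry a canonical $G$-linearisation (the cocycle condition holds for the natural isomorphisms $h^*g^*L\cong (gh)^*L$), and ampleness indeed descends along the finite map $\pi$; the only point you gloss over is that forming $X'/G$ as a variety in the first place uses that every $G$-orbit lies in an affine open, which holds since $X'$ is quasi-projective. The cohomology identification is clean: exactness of $\pi_*$ for $\pi$ finite, $(\pi_*\reg_{X'})^G\cong\reg_X$, and exactness of $(-)^G$ in characteristic zero give $\Ho^*(\reg_X)\cong\Ho^*(\reg_{X'})^G$ compatibly with cup products. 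One caveat on your closing remark: it is true that the factor $\ord G$ does not follow from the invariants identification alone, but Riemann--Roch is not the only way to get it, and ``representation theory'' is not quite the right thing to dismiss. The paper's own toolkit (the holomorphic Lefschetz fixed point theorem, invoked in the proof of Lemma \ref{HKfreeauto}) gives an alternative derivation: for $\id\neq g\in G$ acting freely, the Lefschetz number $\sum_i(-1)^i\tr\bigl(g^*|\Ho^i(\reg_{X'})\bigr)$ vanishes, so averaging over $G$ and using $\dim V^G=\frac{1}{\ord G}\sum_{g\in G}\tr(g|V)$ yields
\[
\chi(\reg_X)=\sum_i(-1)^i\dim\Ho^i(\reg_{X'})^G=\frac{1}{\ord G}\sum_{g\in G}\sum_i(-1)^i\tr\bigl(g^*|\Ho^i(\reg_{X'})\bigr)=\frac{1}{\ord G}\,\chi(\reg_{X'})\,.
\]
Both routes are standard; yours has the mild advantage of not needing the fixed point theorem, while the Lefschetz route makes transparent why freeness of the action is exactly what forces the multiplicativity of $\chi$.
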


\subsection{Proof of Observation \ref{HKobs}}\label{HKobssection}

We already remarked in Section \ref{HKsection} that the structure sheaf of a hyperk\"ahler manifold is a $\IP$-object. Hence, for the verification of Observation \ref{HKobs} we only need to prove the following \begin{prop}\label{HKchar}
 Let $X$ be a compact K\"ahler manifold such that $\reg_X\in \D(X)$ is a $\IP^n[2]$-object. Then $X$ is hyperk\"ahler of dimension $2n$.  
\end{prop}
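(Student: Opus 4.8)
The plan is to use the Beauville--Bogomolov decomposition to reduce the statement to a computation of the cohomology algebra $\Ho^*(\reg_{X'})$ of the \'etale cover $X'$, and then to eliminate all factors except a single hyperk\"ahler manifold. Since $\reg_X$ being a $\IP^n[2]$-object forces $\omega_X$ to be trivial (condition (i)) and forces $c_1(X)=0$, I may apply the decomposition theorem quoted in the introduction: $X$ admits a finite \'etale cover $X'\cong T\times \prod_i Y_i^{\lambda_i}\times \prod_j Z_j^{\nu_j}$ with $T$ a torus, $Y_i\in\HK$, and $Z_j\in\CY$. The condition $\Ho^*(\reg_X)\cong \IC[x]/x^{n+1}$ with $\deg x=2$ gives that $X$ has dimension $2n$ (Remark \ref{Pobjremark}) and, crucially, that $\Ho^1(\reg_X)=0$, which already rules out a torus factor and shows $X$ has finite fundamental group; hence I may as well pass to the canonical cover and work with $X'$ that has trivial canonical bundle.

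First I would record the structure of $\Ho^*(\reg_{X'})$ via the K\"unneth formula, exactly as in formula \eqref{productcoh}: it is the polynomial algebra generated by the degree-$2$ classes $y_{i\alpha}$ (one per hyperk\"ahler factor) modulo the relations $y_{i\alpha}^{d_i}=0$, tensored with the exterior-type algebra on the degree-$e_j$ classes $z_{j\beta}$ coming from the Calabi--Yau factors, modulo $z_{j\beta}^2=0$. The key structural input is that $\Ho^*(\reg_X)=\Ho^*(\reg_{X'})^G$ (Lemma \ref{EulerGlemma}), where $G$ is the deck group of the cover. So I must show that the $G$-invariant part of this big algebra can be isomorphic to $\IC[x]/x^{n+1}$ with the generator in degree $2$ only if there are no Calabi--Yau factors, no torus factor, and exactly one hyperk\"ahler factor appearing with multiplicity one.

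The main argument is then a degree-and-generation count. Because $\Ho^*(\reg_X)$ is generated by a single element in degree $2$, the invariant subalgebra must be concentrated in even degrees and generated in degree $2$. Any Calabi--Yau factor $Z_j\in\CY_{e_j}$ contributes a generator $z_{j\beta}$ in odd or $\ge 3$ degree whose presence in some invariant monomial cannot be produced from degree-$2$ classes; using Corollary \ref{inducedaction}, which tells me $G$ can only permute classes attached to isomorphic factors and rescale by roots of unity, I would show such a $z$-class would force an invariant element in a degree not reachable by powers of a degree-$2$ generator, contradicting $\Ho^*(\reg_X)\cong\IC[x]/x^{n+1}$. The same bookkeeping, comparing the total dimension $\sum_i \dim\Ho^*(\reg_{Y_i}) = \prod(d_i)$ against the required top degree $2n$ and the one-dimensionality of each graded piece of $\IC[x]/x^{n+1}$, forces a single hyperk\"ahler factor with multiplicity one. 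Finally, since $X'$ is then itself a single hyperk\"ahler manifold $Y\in\HK_{2n}$ and the cover $X'\to X$ induces the injection $\Ho^*(\reg_X)\hookrightarrow \Ho^*(\reg_{X'})$ preserving the algebra structure, comparing dimensions (both have total dimension $n+1$) shows the cover is trivial, so $X\cong Y$ is hyperk\"ahler of dimension $2n$.

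I expect the main obstacle to be the combinatorial elimination step: ruling out, in full generality, mixed products where $G$ permutes several isomorphic hyperk\"ahler factors and simultaneously acts by roots of unity, since a priori the invariants of such a permutation action could conspire to produce an algebra generated in degree $2$ with the right Poincar\'e polynomial. The careful point is that an invariant generated in degree $2$ must be a $G$-invariant linear combination of the $y_{i\alpha}$, and I would need to check that the subalgebra it generates can equal the full invariant ring only in the single-factor case; here the precise multiplicative relations $y_{i\alpha}^{d_i}=0$ and the fact that distinct factors give algebraically independent generators do the work, but verifying that no permutation-invariant configuration yields a truncated polynomial algebra $\IC[x]/x^{n+1}$ except the trivial one requires the most care.
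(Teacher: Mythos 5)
Your skeleton (Beauville--Bogomolov cover, K\"unneth description (\ref{productcoh}), passage to $G$-invariants via Lemma \ref{EulerGlemma}) is the same as the paper's, but two of the steps you rely on have genuine gaps. The first is the claim that $\Ho^1(\reg_X)=0$ ``already rules out a torus factor and shows $X$ has finite fundamental group.'' This is false: take $S\in\Kthree$ with an Enriques involution $\iota$ (fixed point free, non-symplectic), $E$ an elliptic curve, and let $\mu_2$ act on $E\times S$ by $(-1)\times\iota$. The quotient $X=(E\times S)/\mu_2$ is a compact K\"ahler threefold with $\omega_X$ trivial and $\Ho^1(\reg_X)=0$ (since $-1$ acts by $-1$ on $\Ho^1(\reg_E)$), yet its Beauville--Bogomolov cover contains the torus factor $E$ and $\pi_1(X)$ is infinite. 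What actually eliminates the torus (and the odd-dimensional Calabi--Yau factors) is the Euler characteristic: $\chi(\reg_X)=n+1\neq 0$ and $\chi(\reg_{X'})=\deg(X'\to X)\cdot\chi(\reg_X)\neq 0$, while any torus factor would force $\chi(\reg_{X'})=0$. You never invoke this, and without it your reduction to a simply connected cover does not get off the ground.

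The second, more serious, gap is in your elimination of products of several hyperk\"ahler factors. You propose to verify ``that no permutation-invariant configuration yields a truncated polynomial algebra $\IC[x]/x^{n+1}$ except the trivial one'' by bookkeeping inside the invariant ring. That statement is simply not true, so no amount of care will establish it: for $Y\in\Kthree$ and $G=\sym_n$ acting on $Y^n$ by permutation of the factors, the invariant subalgebra of $\Ho^*(\reg_{Y^n})\cong\IC[y_1,\dots,y_n]/(y_1^2,\dots,y_n^2)$ is spanned by the elementary symmetric polynomials $e_j$, and since $x^j=j!\,e_j$ for $x=y_1+\dots+y_n$, this invariant ring is exactly $\IC[x]/x^{n+1}$ with $\deg x=2$. (This is precisely why the symmetric quotient stacks of Section \ref{Pnkstacks} have $\IP^n[k]$-units.) So the invariants can ``conspire'' in exactly the way you fear, and the obstruction is not cohomological at all: it is that such a $G$ cannot act \emph{freely}. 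The paper's proof uses this in an essential way: $G$ is the deck group of $\widehat X\to X$, hence acts freely, so $\chi(\reg_{\widehat X})=\ord(G)\cdot(n+1)$ by Lemma \ref{EulerGlemma}; transitivity of $G$ on the classes $y_1,\dots,y_\ell$ (forced by $h^2(\reg_X)=1$) gives $\ell\mid\ord G$; and the decomposition of $x^2$ into two separately $G$-invariant summands forces one of them to vanish, which is only possible if the factors are K3 surfaces, i.e.\ $\ell=n$ and $\chi(\reg_{\widehat X})=2^n$. This yields $n\mid 2^n$ and $n+1\mid 2^n$ simultaneously, which is impossible for $n\ge 2$. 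Some arithmetic input of this kind (freeness of the deck action plus divisibility) is indispensable, and it is exactly what is missing from your proposal.
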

\begin{proof}
As already mentioned in the introduction, this follows immediately from \cite[Prop.\ A.1]{HuyNW}. We will give a slightly different proof.

Recall that the assumption that $\reg_X$ is a $\IP^n$-object means
\begin{enumerate}
 \item $\omega_X$ is trivial,
 \item $\Ho^*(\reg_X)\cong \IC[x]/x^{n+1}$ with $\deg x=2$.
\end{enumerate}
For $n=1$, it follows easily by the Kodaira classification of surfaces, that $X\in \Kthree=\HK_{2}$. Hence, we may assume that $n\ge 2$.

Assumption (i) says that, in particular, $c_1(X)=0$. Hence, we have an \'etale cover $X'\to X$ and a Beauville--Bogomolov decomposition 
\begin{align}\label{coverdec}
  X'=T\times \prod_{i} Y_i\times \prod_j Z_j\,.
\end{align}
The plan is to show that $X'$ is hyperk\"ahler and the cover is an isomorphism.
\begin{convention}\label{BBcoverconv}
Whenever we have a Beauville--Bogomolov decomposition of the form (\ref{coverdec}), $T$ is a complex torus, $Y_i\in \HK_{2d_i}$ is a hyperk\"ahler of dimension $2d_i$ and $Z_j\in \CY_{e_j}$ is a strict simply connected Calabi--Yau variety of dimension $e_j\ge 3$. Furthermore, $\Ho^2(\reg_{Y_i})=\langle y_i\rangle$ and $\Ho^{e_j}(\reg_{Z_j})=\langle z_j\rangle$.   
\end{convention}
By Assumption (ii), we have $\chi(\reg_X)=n+1$. On the other hand, since $X'\to X$ is \'etale, say of degree $m$, we have 
\begin{align}\label{Eulerformula}
m(n+1)=m\cdot \chi(\reg_X)=\chi(\reg_{X'})=\chi(T)\cdot \prod_i\chi(\reg_{Y_i})\cdot \prod_j\chi(\reg_{Z_j})\,.
\end{align}
This implies that $T=\pt$ and all $e_j$ are even. Otherwise, the right-hand side of (\ref{Eulerformula}) would be zero. Since the torus part is trivial, $X'$ is simply connected. Hence, $X'=\widehat X$ is the universal cover of $X=\widehat X/G$ where $\pi_1(X)\cong G\subset \Aut(X)$. It follows by Lemma \ref{EulerGlemma} that \[\IC[x]/x^{n+1}\cong \Ho^*(\reg_X)\cong \Ho^*(\reg_{\widehat X})^G\subset \Ho^*(\reg_{\widehat X})\,.\] In particular, there must be an $x\in\Ho^2(\reg_{\widehat X})^G\subset \Ho^2(\reg_{\widehat X})$ such that $0\neq x^n\in \Ho^{2n}(\reg_{\widehat X})$. Since \[2n=\dim X=\dim \widehat X=\sum_i2d_i+\sum_je_j\,,\]
we have $\Ho^{2n}(\reg_{\widehat X})=\langle s \rangle$ with $s=\prod_i y_i^{d_i}\cdot \prod_j z_j$. As $\deg x=2$ and $\deg z_j=e_j\ge 3$, it follows that $x^n$ can be a non-zero multiple of $s$ only if $X'$ does not have Calabi--Yau factors. This means that $X=\prod_iY_i$ and $x=\sum_i y_i$ (up to coefficients which we can absorb by the choice of the generators $y_i$ of $\Ho^2(\reg_{Y_i})$). Every element of $G$ acts by some permutation on the $y_i$; see Corollary \ref{inducedaction}. By assumption, $\Ho^2(\reg_X)$ is of dimension one. Hence, $\Ho^2(\reg_{\widehat X})^G=\langle x\rangle$. It follows that the action of $G$ on the $y_i$ is transitive. Otherwise, there would be $G$-invariant summands of $x=\sum_i y_i$ which would be linearly independent. Hence, again by Corollary \ref{inducedaction}, we have 
$\widehat X\cong Y^\ell$ for some $Y\in \HK_{2d}$. 
For dimension reasons, $d\cdot \ell=n$. 

We assume for a contradiction that $\ell>1$. We have the $G$-invariant class
\begin{align}\label{t2}x^2=\sum_\alpha y_\alpha^2 + 2\sum_{\alpha\neq\beta} y_\alpha y_\beta\in \Ho^4(\reg_{X'})^G=\Ho^4(\reg_X)\,.\end{align}
It follows by Corollary \ref{inducedaction} that the two summands in (\ref{t2}) are again $G$-invariant.
But, by assumption, $h^4(\reg_{X})=1$. Thus, one of the two summands must be zero. By (\ref{productcoh}), we see that the only possibility for this to happen is $d=1$, i.e. $Y\in \Kthree$. Thus, $\ell=n$. 
Note that $\ord G=\deg(X'\to X)=m$. By (\ref{Eulerformula}) or Lemma \ref{EulerGlemma}, we have $m\mid \chi(X')=\chi(Y)^n=2^n$.
As $G$ acts transitively on $\{y_1,\dots,y_n\}$ we get $n\mid m\mid 2^n$. Again by (\ref{Eulerformula}), also $n+1\mid 2^n$. For $n\ge 2$, this is a contradiction.   

Hence, we are in the case $\ell=1$ which means that $\hat X=Y\in \HK_{2n}$. In particular, $\chi(\reg_{\hat X})=n+1=\chi(X)$. By (\ref{Eulerformula}), we get $m=1$ which means that we have an isomorphism $Y\cong X$.
\end{proof}

\subsection{Enriques varieties}\label{Enriquessection}

In this section we will consider a certain class of compact K\"ahler manifolds with the property that $\reg_X\in \D(X)$ is exceptional; see Definition \ref{exceptionalobj}. These manifolds are automatically algebraic by the following result; see e.g.\ \cite[Exc.\ 7.1]{VoisinI}.  
\begin{lemma}\label{algebraiclemma}
 Let $X$ be a compact K\"ahler manifold with $\Ho^2(\reg_X)=0$. Then $X$ is projective.
\end{lemma}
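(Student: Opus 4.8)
The plan is to manufacture a rational Kähler class and then invoke the Kodaira embedding theorem. The essential point is that the hypothesis $\Ho^2(\reg_X)=0$ forces the whole of $\Ho^2(X,\IR)$ to consist of $(1,1)$-classes, so that the Kähler cone becomes an open subset of the \emph{full} second real cohomology rather than of a proper subspace; once that is established, density of rational classes does the rest.

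First I would apply the Hodge decomposition, which is available since $X$ is compact Kähler:
\[
\Ho^2(X,\IC)=\Ho^{2,0}(X)\oplus \Ho^{1,1}(X)\oplus \Ho^{0,2}(X),
\]
where Dolbeault's theorem identifies $\Ho^{0,2}(X)\cong \Ho^2(\reg_X)$. By assumption this summand vanishes, and since complex conjugation gives $\Ho^{2,0}(X)=\overline{\Ho^{0,2}(X)}$, the summand $\Ho^{2,0}(X)$ vanishes as well. Hence $\Ho^2(X,\IC)=\Ho^{1,1}(X)$, and taking real classes, $\Ho^2(X,\IR)=\Ho^{1,1}(X,\IR)$, the space of real classes of type $(1,1)$.

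Next, since $X$ is Kähler, the Kähler cone $\mathcal K$ is a non-empty open convex cone inside $\Ho^{1,1}(X,\IR)$. By the previous step $\Ho^{1,1}(X,\IR)=\Ho^2(X,\IR)$, so $\mathcal K$ is a non-empty open subset of $\Ho^2(X,\IR)$. The image of $\Ho^2(X,\IQ)$ is dense in $\Ho^2(X,\IR)$, so $\mathcal K$ contains a rational class; clearing denominators yields an integral Kähler class $\alpha\in \Ho^2(X,\IZ)$. By the Kodaira embedding theorem, $\alpha$ is the first Chern class of an ample line bundle, and therefore $X$ is projective.

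I do not expect a serious obstacle here, as this is essentially a textbook application of Kodaira embedding; the only step that genuinely consumes the hypothesis is the density argument. Without $\Ho^2(\reg_X)=0$ the Kähler cone would sit inside the proper subspace $\Ho^{1,1}(X,\IR)\subsetneq \Ho^2(X,\IR)$, and a rational class of $\Ho^2(X,\IR)$ need not be of type $(1,1)$, so one could not guarantee the existence of a rational Kähler class. The vanishing removes exactly this obstruction, making the rational classes dense in a space that now contains the open Kähler cone.
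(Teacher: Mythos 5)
Your proof is correct and is precisely the standard argument behind the reference the paper cites for this lemma (Voisin, Exc.\ 7.1): the vanishing of $\Ho^2(\reg_X)\cong \Ho^{0,2}(X)$ (and hence of $\Ho^{2,0}(X)$ by conjugation) makes the K\"ahler cone open in $\Ho^2(X,\IR)$, so density of rational classes plus the Kodaira embedding theorem yields projectivity. The paper gives no independent proof, so there is nothing further to compare.
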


From now on, let $E$ be a smooth projective variety.

\begin{definition}\label{strictEnriquesdef}
We call $E$ a \textit{strict Enriques variety} if the following three conditions hold:
\begin{enumerate}[label=(S\arabic*)]
 \item\label{S1} The trivial line bundle $\reg_E$ is exceptional.
 \item\label{S2} The canonical line bundle $\omega_E$ is non-trivial and of finite order $m:=\ord(\omega_E)$ in $\Pic E$ (this order is called the \textit{index} of $E$).
 \item\label{S3} The canonical cover $\widetilde E$ of $E$ is hyperk\"ahler.% (automatically of dimension $\dim \widetilde E=\dim E=2(m-1)$).
\end{enumerate}
\end{definition}

This definition is inspired by similar, but different, notions of higher-dimensional Enriques varieties which are as follows.

\begin{definition}[\cite{BNWS}]
 We call $E$ a \textit{BNWS (Boissi\`ere--Nieper-Wi\ss kirchen--Sarti) Enriques variety} if the following three conditions hold:
\begin{enumerate}[label=(BNWS\arabic*)]
 \item $\chi(\reg_E)=1$.
 \item The canonical line bundle $\omega_E$ is non-trivial and of finite order $m:=\ord(\omega_E)$ in $\Pic E$ (this order is called the \textit{index} of $E$).
 \item The fundamental group of $E$ is cyclic of the same order, i.e. $\pi_1(E)\cong \mu_m$.
\end{enumerate}
\end{definition}

\begin{definition}[\cite{OSEnriques}]
We call $E$ an \textit{OS (Oguiso--Schr\"oer) Enriques variety} if $E$ is not simply connected and its universal cover $\widehat E$ is a compact hyperk\"ahler manifold.
\end{definition}

\begin{prop}\label{classEnriques}
\begin{enumerate}
 \item\label{prop1} Let $E$ be a strict Enriques variety of index $n+1$. Then $\dim E=2n$.
\item\label{prop2} Conversely, every smooth projective variety $E$ satisfying (S2) with $m=n+1$, (S3), and
 $\dim E=2n$ is already a strict Enriques variety.
  \item\label{Enriquesquotient} Strict Enriques varieties of index $n+1$ are exactly the quotient varieties of the form $E=Y/\langle g\rangle$, where $Y\in \HK_{2n}$ and $g\in \Aut(Y)$ is purely symplectic of order $n+1$ such that $\langle g\rangle$ acts freely on $Y$. 
 \item\label{eqofdef} $X$ is a strict Enriques variety if and only if it is BNWS Enriques and OS Enriques.
 \end{enumerate}
\end{prop}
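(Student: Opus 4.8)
The engine for all four parts is Lemma \ref{EulerGlemma}: for a free quotient $E = Y/G$ it gives simultaneously $\chi(\reg_Y) = \ord G\cdot\chi(\reg_E)$ and $\Ho^*(\reg_E)\cong\Ho^*(\reg_Y)^G$. The computation I will reuse throughout is the following. Let $Y\in\HK_{2n}$ and let $G = \langle g\rangle$ act freely with $\ord g = n+1$. By Lemma \ref{HKfreeauto} the free action forces $g$ to be purely non-symplectic, so $g$ acts on a generator $y\in\Ho^2(\reg_Y)$ by a primitive $(n+1)$-th root of unity $\zeta$ and hence on $y^k$ by $\zeta^k$. Since $\Ho^*(\reg_Y)\cong\IC[y]/y^{n+1}$ with $\deg y = 2$, the only invariant among $y^0,\dots,y^n$ is $y^0$, so $\Ho^*(\reg_E)\cong\Ho^*(\reg_Y)^G\cong\IC[0]$ and $\reg_E$ is exceptional.

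For \ref{prop1}, condition (S1) gives $\Ho^*(\reg_E)\cong\IC[0]$, whence $\chi(\reg_E) = 1$; the canonical cover $\widetilde E\to E$ has degree $m = n+1$, so Lemma \ref{EulerGlemma} yields $\chi(\reg_{\widetilde E}) = n+1$. As $\widetilde E$ is hyperk\"ahler of dimension $\dim E$ and a hyperk\"ahler of dimension $2d$ has $\chi(\reg)=d+1$, we conclude $\dim E = 2n$. For \ref{prop2}, the hypotheses (S3) and $\dim E = 2n$ give $\widetilde E\in\HK_{2n}$, while (S2) with $m = n+1$ presents the canonical cover as a free cyclic quotient by $G = \langle g\rangle$ with $\ord g = n+1$; the displayed computation then supplies (S1), so $E$ is strict Enriques.

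For \ref{Enriquesquotient}, if $E$ is strict Enriques of index $n+1$ then \ref{prop1} gives $\dim E = 2n$, and by the definition of the canonical cover (item \ref{cc} of Section \ref{notations}) together with (S3) we have $E = \widetilde E/\langle g\rangle$ with $\widetilde E\in\HK_{2n}$ and $g$ of order $n+1$ acting freely; Lemma \ref{HKfreeauto} makes $g$ purely non-symplectic. Conversely, given $E = Y/\langle g\rangle$ of this form, Corollary \ref{quotientcanonical} shows $\omega_E$ is non-trivial of finite order, and since $\gcd(n,n+1)=1$ the element $\zeta^n$ is again primitive, so this order is exactly $n+1$, giving (S2); as $\omega_Y$ is trivial and $Y\to E$ is \'etale Galois of degree $n+1 = \ord(\omega_E)$, $Y$ is the canonical cover, giving (S3); and the displayed computation gives (S1).

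For \ref{eqofdef}: if $E$ is strict Enriques, \ref{Enriquesquotient} writes $E = Y/\langle g\rangle$ with $Y$ hyperk\"ahler, hence simply connected, so $Y = \widehat E$ and $\pi_1(E)\cong\langle g\rangle\cong\mu_m$; this yields (BNWS1) from (S1), (BNWS2) $=$ (S2), (BNWS3), and both OS conditions. Conversely, assume $E$ is BNWS and OS Enriques. From the OS condition $\widehat E$ is hyperk\"ahler, and (BNWS3) says $\pi_1(E)\cong\mu_m$ is cyclic of order $m$, so the universal cover $\widehat E\to E$ is the unique connected cover of degree $m$; its canonical bundle is trivial and its degree equals $\ord(\omega_E)=m$, so it coincides with the canonical cover $\widetilde E$, giving (S3). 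The count $\chi(\reg_{\widehat E}) = m\cdot\chi(\reg_E) = m$ forces $\widehat E\in\HK_{2(m-1)}$, and the displayed computation then gives (S1); (S2) is (BNWS2). I expect the one genuinely delicate point to be this last identification: one must confirm that the universal cover and the canonical cover are the same, which rests on $\pi_1(E)$ being cyclic of order exactly $m=\ord(\omega_E)$, so that each is the unique connected cover of that degree.
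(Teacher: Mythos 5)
Your proposal is correct and takes essentially the same route as the paper: the canonical-cover definition plus Lemma \ref{EulerGlemma} for the Euler-characteristic and invariants bookkeeping, Lemma \ref{HKfreeauto} to force deck transformations to be purely non-symplectic, and the explicit computation of invariants in $\IC[y]/y^{n+1}$. The only cosmetic deviation is in part \ref{prop2}, where you run the invariant computation directly (exactly as the paper does for part \ref{Enriquesquotient}), whereas the paper instead forces equality in the inclusion chain $\IC[0]\subset \Ho^*(\reg_E)\subset \Ho^*(\reg_{\widetilde E})$ from $\chi(\reg_E)=1$; the two arguments are equivalent.
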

\begin{proof}
Let $E$ be a strict Enriques variety of index $n+1$ with canonical cover $\widetilde E\in \HK_{2d}$. To verify \ref{prop1} we have to show that $d=n$.
By definition of the canonical cover (see Section \ref{notations} \ref{cc}), the covering map $\widetilde E\to E$ is the quotient by a cyclic group $G$ of order $n+1$. 
As $\widetilde E\in \HK_{2d}$, we have $\chi(\reg_Y)=d+1$. Also, $\chi(\reg_E)=1$ by \ref{S1}. 
We get $d=n$ by Lemma \ref{EulerGlemma}. 

Consider now a smooth projective variety $E$ with $\ord \omega_E=n+1$ and $\dim E=2n$ such that its canonical cover $\widetilde E$ is hyperk\"ahler, necessarily of $\dim \widetilde E=\dim E=2n$. Then, again by Lemma \ref{EulerGlemma}, we have $\chi(\reg_E)=1$. Furthermore, 
\begin{align}\label{Hochain}\IC[0]\subset \Ho^*(\reg_{E})\cong\Ho^*(\reg_{\widetilde E})^G\subset \Ho^*(\reg_{\widetilde E})\cong \IC[y]/y^{n+1}\end{align}
with $\deg y=2$. In order to get $\chi(\reg_E)=1$, the first inclusion must be an equality which means that $\reg_E$ is exceptional. 

Let us proof part \ref{Enriquesquotient}. Given a strict Enriques variety $E$ of index $n+1$ the canonical cover $Y:=\widetilde E$ has the desired properties.

Conversely, let $Y\in \HK_{2n}$ together with a purely non-symplectic $g\in \Aut(Y)$ of order $n+1$ such that $\langle g\rangle$ acts freely on $Y$, and set $E:=Y/\langle g\rangle$. The action of $g$ on the cohomology $\Ho^*(\reg_{Y})=\IC[y]/y^{n+1}$ is given by $g\cdot y^i=\rho_{Y,g}^iy^i$. Since, by assumption, $\rho_{Y,g}$ is a primitive $(n+1)$-th root of unity, we get $\Ho^*(\reg_E)\cong \Ho^*(\reg_Y)^G\cong \IC[0]$, hence \ref{S1}. The action of $g$ on the $n$-th power of a symplectic form, hence on the canonical bundle $\omega_Y$, is also given by multiplication by $\rho_{Y,g}$. It follows that the canonical bundle $\omega_E$ of the quotient is of order $n+1$ and $Y\to E$ is the canonical cover.

For the proof of \ref{eqofdef}, first note that (S1) implies (BNWS1). Furthermore, given a strict Enriques variety $E$, the canonical cover $Y=\widetilde E$ of $E$ is also the universal cover, since $Y$ is connected. From this, we get (BNWS2) and (BNWS3). Furthermore, $E$ is OS Enriques, since $Y$ is hyperk\"ahler.

Conversely, if $E$ is BNWS and OS Enriques, its canonical and universal cover coincide and is given by a hyperk\"ahler manifold $Y$ with the properties as in \ref{Enriquesquotient}.
\end{proof}

Note that the variety $Y\in \HK_{2n}$ from part \ref{Enriquesquotient} of the proposition is the universal as well as the canonical cover of $E$. We call $Y$ the \textit{hyperk\"ahler cover} of $E$.

Another way to characterise strict Enriques varieties is as OS Enriques varieties whose fundamental group have the maximal possible order; see \cite[Prop.\ 2.4]{OSEnriques}.

Strict Enriques varieties of index $2$ are exactly the Enriques surfaces.
To get examples of higher index, by part \ref{eqofdef} of the previous proposition, we just have to look for examples which occur in \cite{BNWS} as well as in \cite{OSEnriques}.

\begin{theorem}[\cite{BNWS},\cite{OSEnriques}]\label{Enriquesexistence}
There are strict Enriques varieties of index $2$, $3$, and $4$. 
\end{theorem}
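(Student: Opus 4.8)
The plan is to reduce everything to Proposition \ref{classEnriques}\ref{Enriquesquotient}, which identifies the strict Enriques varieties of index $m=n+1$ with the quotients $Y/\langle g\rangle$, where $Y\in\HK_{2n}$ and $g\in\Aut(Y)$ is purely non-symplectic of order $m$ with $\langle g\rangle$ acting freely. Thus it suffices to exhibit, for each $m\in\{2,3,4\}$, a pair $(Y,g)$ of this kind; once such a pair is produced the quotient is automatically a strict Enriques variety of index $m$, so conditions \ref{S1}--\ref{S3} need no separate checking. Note that by Lemma \ref{HKfreeauto} freeness already forces $g$ to be purely non-symplectic with symplectic order dividing $n+1$, which is consistent with the target $m=n+1$ and serves as a sanity check on the construction.

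For $m=2$ the example is classical: one takes $Y$ a K3 surface equipped with a fixed-point-free involution $g$, which exists (for instance on suitable elliptic or Kummer K3 surfaces). By Lemma \ref{HKfreeauto} such a free involution is automatically non-symplectic, so $(Y,g)$ is of the required type and $Y/\langle g\rangle$ is precisely an Enriques surface, recovering the $n=1$ case.

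For $m=3$ and $m=4$ the plan is to use generalized Kummer varieties. Let $A$ be an abelian surface carrying an automorphism $g_0$ fixing the origin and acting on the one-dimensional space $\Ho^2(\reg_A)$, i.e.\ on the (conjugate) symplectic form, by a primitive $m$-th root of unity; such $A$ arise from complex multiplication (e.g.\ $A=E\times E$ with $E$ the elliptic curve with $\IZ[\zeta_3]$-multiplication for $m=3$, and the analogous $\IZ[i]$-case for $m=4$). On the generalized Kummer variety $Y=K_{n}(A)=\Sigma^{-1}(0)\in\HK_{2n}$, where $\Sigma\colon A^{[n+1]}\to A$ is the summation morphism, $g_0$ induces an automorphism which again acts on $\Ho^2(\reg_Y)$ by the same primitive $m$-th root, hence has symplectic order $m$. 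One then composes with a translation $T_a$ by an $(n+1)$-torsion point $a$: since translations are homotopic to the identity they act trivially on $\Ho^2(\reg_Y)$, so $g:=g_0\circ T_a$ still has symplectic order $m$, while $a$ can be chosen so that the norm condition $\sum_{i=1}^{n+1}g_0^{\,i}(a)=0$ holds, forcing $g$ to have order exactly $m$ on $Y$ (hence $g$ is purely non-symplectic). The role of $a$ is to destroy the fixed points that $g_0$ necessarily has.

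The main obstacle is the final and genuinely technical point: verifying that $\langle g\rangle$ acts freely on $K_n(A)$. The automorphism $g_0$ alone always has fixed subschemes (for example configurations supported at the $g_0$-fixed points of $A$), so the translation summand is essential, and ruling out a fixed length-$(n+1)$ subscheme $\xi$ with $\Sigma(\xi)=0$ requires analyzing all possible $g$-invariant configurations of points on $A$ summing to $0$, together with their infinitesimal structure along the diagonal. This fixed-point analysis, carried out by an explicit case distinction on the orbit types of the supporting points under $g$, is exactly the technical core of \cite{BNWS} and \cite{OSEnriques}; granting it, the three cases follow and the quotients $Y/\langle g\rangle$ provide the desired strict Enriques varieties of index $2$, $3$, and $4$.
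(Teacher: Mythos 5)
Your proposal is correct and takes essentially the same route as the paper: index $2$ via Enriques surfaces, and indices $3$ and $4$ via the generalized Kummer varieties $K_n(A)$ of CM abelian surfaces with induced fixed-point-free non-symplectic automorphisms, with the crucial freeness verification deferred to \cite{BNWS} and \cite{OSEnriques} exactly as the paper itself does. The only cosmetic difference is that you reduce through Proposition \ref{classEnriques}\ref{Enriquesquotient} (free quotients of hyperk\"ahler manifolds), while the paper invokes Proposition \ref{classEnriques}\ref{eqofdef} (strict Enriques $=$ BNWS $\cap$ OS Enriques) to import the same examples from the literature.
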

Note that the statement does not exclude the existence of strict Enriques varieties of index greater than $4$, but, for the time being, there are no known examples.

In the known examples of index $n+1=3$ or $n+1=4$, the hyperk\"ahler cover $Y$ is given by a generalised Kummer variety $K_nA\subset A^{[n+1]}$. More concretely, in these examples $A$ is an abelian surface isogenous to a product of elliptic curves with complex multiplication, and there is a non-symplectic automorphism $f\in \Aut(A)$ of order $n+1$ which induces a non-symplectic fixed point free automorphism $K_n(f)\in \Aut(K_nA)$ of the same order.      

Note that there are examples of varieties which are BNWS Enriques but not OS Enriques \cite[Sect.\ 4.3]{BNWS} and of the converse \cite[Sect.\ 4]{OSEnriques}.

We will use the following lemma in the proof of Theorem \ref{existencethm}. 
%%%%%%%%%%%%
\begin{lemma}\label{Enriquescanonical}
Let $E$ be a strict Enriques variety of index $n+1$ with hyperk\"ahler cover $Y$. Then there is an isomorphism of algebras $\oplus_{s=0}^n\Ho^*(\omega_E^{-s})\cong \Ho^*(\reg_Y)=\IC[y]/{y^{n+1}}$. Under this isomorphism, $\Ho^*(\omega_E^{-s})\cong \IC\cdot y^s\cong \IC[-2s ]$.
\end{lemma}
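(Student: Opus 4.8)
The plan is to compute $\Ho^*(\reg_Y)$ in two ways, matching the $G$-eigenspace decomposition coming from $Y$ against the cyclic-cover decomposition living on $E$. Recall from Proposition \ref{classEnriques}\ref{Enriquesquotient} that $E=Y/\langle g\rangle$ with $Y\in\HK_{2n}$ and $g\in\Aut(Y)$ purely non-symplectic of order $n+1$ acting freely. Write $\rho:=\rho_{Y,g}$, a primitive $(n+1)$-th root of unity, so that $\Ho^*(\reg_Y)=\IC[y]/y^{n+1}$ with $\deg y=2$ and $g\cdot y^s=\rho^s y^s$. Thus the $g$-action on $\Ho^*(\reg_Y)$ has the $n+1$ distinct eigenvalues $\rho^0,\dots,\rho^n$, with one-dimensional eigenspaces $\IC\cdot y^s$ sitting in degree $2s$. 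On the other hand, by the definition of the canonical cover (Section \ref{notations}\ref{cc}) the finite \'etale map $\pi\colon Y\to E$ is the quotient by $G=\langle g\rangle$, and $\pi_*\reg_Y\cong\bigoplus_{s=0}^n\omega_E^{-s}$ as a $G$-equivariant sheaf of $\reg_E$-algebras, the summand $\omega_E^{-s}$ being the eigensheaf on which $g$ acts through a character $\zeta^s$ for a fixed primitive $(n+1)$-th root $\zeta$.

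Since $\pi$ is finite, hence affine, I would then use $\Ho^*(\reg_Y)=\Ho^*(E,\pi_*\reg_Y)=\bigoplus_{s=0}^n\Ho^*(\omega_E^{-s})$, an equality of $G$-representations that is moreover compatible with the ring structures (cup product combined with the sheaf-algebra multiplication $\omega_E^{-s}\otimes\omega_E^{-t}\to\omega_E^{-(s+t)}$). Consequently each $\Ho^*(\omega_E^{-s})$ is precisely the $\zeta^s$-eigenspace of $\Ho^*(\reg_Y)$; as the eigenvalues $\zeta^s$ are pairwise distinct, this eigenspace is one of the lines $\IC\cdot y^{s'}$, so $\Ho^*(\omega_E^{-s})$ is one-dimensional and concentrated in a single even degree $2s'$.

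It remains to identify the bijection $s\mapsto s'$, which is where the \emph{real content} lies. Because $\omega_E^{-s}$ is the $s$-fold product of $\omega_E^{-1}$ inside the algebra $\pi_*\reg_Y$, its $g$-eigenvalue is $\zeta^s$ with $\zeta=\rho^a$ for a single integer $a$ coprime to $n+1$; hence $\Ho^*(\omega_E^{-s})=\IC\cdot y^{as\bmod(n+1)}$. To pin down $a=1$ I would apply Serre duality on $E$ at the top degree: $\Ho^{2n}(\omega_E^{-n})\cong\Ho^0(\omega_E^{n+1})^*=\Ho^0(\reg_E)^*\neq 0$, whereas $\Ho^{2n}(\omega_E^{-s})\cong\Ho^0(\omega_E^{s+1})^*=0$ for $0\le s<n$, since there $\omega_E^{s+1}$ is a nontrivial torsion line bundle and therefore has no nonzero global sections. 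Thus the top class $y^n$ must lie in the $s=n$ summand, i.e.\ $an\equiv n\pmod{n+1}$, which forces $a=1$.

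Putting this together gives $\Ho^*(\omega_E^{-s})=\IC\cdot y^s$, concentrated in degree $2s$, that is $\cong\IC[-2s]$ in the paper's grading convention. The decomposition $\bigoplus_{s=0}^n\Ho^*(\omega_E^{-s})\cong\Ho^*(\reg_Y)=\IC[y]/y^{n+1}$ is then an isomorphism of algebras, the relation $y^s\cdot y^t=y^{s+t}$ reflecting the multiplication $\omega_E^{-s}\otimes\omega_E^{-t}\to\omega_E^{-(s+t)}$ of the cyclic cover. I expect the main obstacle to be exactly the normalization $a=1$, i.e.\ showing that $\omega_E^{-s}$ lands in degree $2s$ rather than in $2(as\bmod(n+1))$ for some other primitive $a$; the Serre-duality argument at top degree resolves this, while the rest is the standard cohomology decomposition for a finite Galois cover.
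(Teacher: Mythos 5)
Your proposal is correct and follows essentially the same route as the paper: take cohomology of the canonical-cover decomposition $\pi_*\reg_Y\cong\bigoplus_{s=0}^n\omega_E^{-s}$ to get an algebra decomposition of $\IC[y]/y^{n+1}$, then use Serre duality at the top degree (the paper computes $\Ho^*(\omega_E^{-n})=\Ho^*(\omega_E)=\IC[-2n]$, you show the complementary vanishing $\Ho^{2n}(\omega_E^{-s})=0$ for $s<n$) to force the generator $y$ into the summand $\Ho^*(\omega_E^{-1})$. Your additional eigensheaf/eigenspace bookkeeping is a more explicit rendering of what the paper leaves implicit, not a different argument.
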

\begin{proof}
Let $\pi\colon Y\to E$ be the morphism which realises $Y$ as the universal and canonical cover of $E$. By the construction of the canonical cover (see Section \ref{notations} \ref{cc}), we have an isomorphism of $\reg_E$-algebras $\pi_*\omega_Y\cong \reg_E\oplus \omega_E^{-1}\oplus \dots \oplus \omega_E^{-n}$. Hence, we get an isomorphism of graded $\IC$-algebras
\begin{align}\label{regdecomposition}
\IC[y]/y^{n+1}\cong \Ho^*(\reg_Y)\cong \Ho^*(\reg_E)\oplus \Ho^*(\omega_E^{-1})\oplus \dots \oplus \Ho^*(\omega_E^{-n})
\end{align}
with $\deg y=2$. Hence, for the proof of the assertion, it is only left to show that the generator $y$ lives in the direct summand $\Ho^*(\omega_E^{-1})$ under the decomposition (\ref{regdecomposition}). This follows from the fact that $\omega_E^{-n}=\omega_E$, so by Serre duality $\Ho^*(\omega_E^{-n})=\IC[-2n]$.    
\end{proof}
\subsection{Enriques stacks}\label{Enriquesstacksection}
The main difficulty in finding pairs $Y\in \HK$ and $f\in \Aut(Y)$ which, by Proposition \ref{classEnriques}\ref{Enriquesquotient}, induce strict Enriques varieties, is the condition that $\langle f\rangle$ acts freely.

Let us drop this assumption and consider a $Y\in \HK_{2n}$ together with a non-symplectic automorphism $f\in \Aut(Y)$ which may have fixed points. Then we call the corresponding quotient stack $\cE=[Y/\langle f\rangle]$ a \textit{strict Enriques stack}. In analogy to the proof of Proposition \ref{classEnriques}, one can show that there is also the following equivalent
\begin{definition}
A \textit{strict Enriques stack} is a smooth projective orbifold $\cE$ such that 
\begin{enumerate}[label=(S\arabic*')]
 \item The trivial line bundle $\reg_\cE$ is exceptional.
 \item The canonical line bundle $\omega_\cE$ is non-trivial and of finite order $m:=\ord(\omega_\cE)$ in $\Pic \cE$ (this order is called the \textit{index} of $\cE$).
 \item The canonical cover $\widetilde \cE$ of $\cE$ is a hyperk\"ahler manifold of dimension $\dim \widetilde E=\dim E=2(m-1)$.
\end{enumerate}
\end{definition}
Note that, in contrast to the case of strict Enriques varieties, the formula relating index and dimension is not a consequence of the other conditions but is part of the assumptions.

% 
% In complete analogy to strict Enriques varieties, strict Enriques stacks can be characterised as quotients of hyperk\"ahler manifolds by non-symplectic automorphisms. The only difference is that the automorphisms may have fixed points which are responsible for the stacky structure of the quotient. 
% 
% \begin{prop}
% Strict Enriques stacks of index $n+1$ are exactly the global quotient stacks of the form $\cE=[Y/\langle g\rangle]$ with $Y\in \HK_{2n}$ and $g\in \Aut(Y)$ purely non-symplectic of order $n+1$ (but $\langle g\rangle$ does not have to act freely).
% \end{prop}

As alluded to above, it is much easier to find examples of strict Enriques stacks compared to strict Enriques varieties. Let $S\in \Kthree$ together with a purely non-symplectic automorphism $f\in \Aut(S)$ of order $n+1$ (which may, and, for $n+1>2$, will have fixed points). Then the quotient of the associated Hilbert scheme of points by the induced automorphism $[X^{[n]}/f^{[n]}]$ is a strict Enriques stack. There are also examples of strict Enriques stacks whose hyperk\"ahler cover is $K_5(A)$; compare \cite[Rem. 4.1]{BNWS}.

% \begin{definition}
%  An \textit{strict Enriques quotient stack} is a quotient stack of the form $\cE=[Y/\langle g\rangle]$ with 
% \end{definition}

\section{Construction of varieties with $\IP^n[k]$-units}
\subsection{Definition and basic properties}\label{Pnkdefsection}
\begin{definition}\label{Pnkdef}
Let $X$ be a compact K\"ahler manifold. We say that $X$ has a \textit{$\IP^n[k]$-unit} if $\reg_X$ is a $\IP^n[k]$-object in $\D(X)$. This means that the following two conditions are satisfied
\begin{enumerate}[label=(C\arabic*)]
 \item\label{C1} The canonical line bundle $\omega_X$ is trivial,
 \item\label{C2} There is an isomorphism of $\IC$-algebras $\Ho^*(\reg_X)\cong \IC[x]/x^{n+1}$ with $\deg x = k$.
\end{enumerate}
\end{definition}
\begin{remark}
If $X$ has a $\IP^n[k]$-unit, we have $\dim X=n\cdot k$. This follows by Serre duality.
\end{remark}
\begin{remark}\label{CYHK}
For $n=1$, compact K\"ahler manifolds with $\IP^1[k]$-units are exactly the strict Calabi--Yau manifolds.
For $k=2$, compact K\"ahler manifolds with $\IP^n[2]$-units are exactly the compact hyperk\"ahler manifolds; see Observations \ref{CYobs} and \ref{HKobs} and Remark \ref{Pobjremark}.   
\end{remark}
\begin{remark}\label{keven}
If $n\ge 2$, the number $k$ must be even. The reason is that the algebra $\Ho^*(\reg_X)$ is graded symmetric. Hence, every $x\in \Ho^k(\reg_X)$ with $k$ odd satisfies $x^2=0$.
\end{remark}
Since, in the following, we usually consider the case that $k>2$, we will speak about \textit{varieties with $\IP^n[k]$-units}; compare Lemma \ref{algebraiclemma}.
\subsection{Non-examples}\label{nonexamplesection}
In order to get a better understanding of the notion of varieties with $\IP^n[k]$-units, it might be instructive to start with some examples which satisfy some of the conditions but fail to satisfy others.
\subsubsection{Products of Calabi--Yau varieties}
Let $Z\in \CY_8$ and $Z'\in \CY_4$, and set $X:=Z\times Z'$. Then $\omega_X$ is trivial and by the K\"unneth formula
\[
 \Ho^*(\reg_X)\cong \IC[0]\oplus \IC[-4]\oplus \IC[-8]\oplus \IC[-12]\,.
\]
Hence, as a graded vector space, $\Ho^*(\reg_X)$ has the right shape for a $\IP^3[4]$-unit. As an isomorphism of graded algebras, however, the K\"unneth formula gives 
\[
 \Ho^*(\reg_X)\cong \IC[z]/z^2\otimes \IC[z']/z'^2\cong \IC[z, z']/(z^2, z'^2)\quad,\quad \deg z=8\,,\, \deg z'=4\,.
\]
This means that, as a $\IC$-algebra, $\Ho^*(\reg_X)$ is not generated in degree $4$ so that $\reg_X$ is not a $\IP^3[4]$-object.
\subsubsection{Hilbert schemes of points on Calabi--Yau varieties}
For every smooth projective variety $X$ and $n=2,3$, the Hilbert schemes $X^{[n]}$ of $n$ points on $X$ are smooth and projective of dimension $n\cdot \dim X$. If $\dim X\ge 3$ and $n\ge 4$, the Hilbert scheme $X^{[n]}$ is not smooth. 

Let now $X$ be a Calabi--Yau variety of even dimension $k$ and $n=2$ or $n=3$. Then there is an isomorphism of algebras $\Ho^*(\reg_{X^{[n]}})\cong \IC[x]/(x^{n+1})$ with $\deg x=k$. The reason is that $X^{[n]}$ is a resolution of the singularities of the symmetric quotient variety $X^n/\sym_n$, which has rational singularities, by means of the Hilbert--Chow morphism $X^{[n]}\to X^n/\sym_n$. 
For $k=2$, the Hilbert scheme of points on a K3 surface is one of the few known examples of a compact hyperk\"ahler manifold which means that $X^{[n]}$ has a $\IP^n[2]$-unit for $X\in \Kthree$.
For $\dim X=k> 2$, however, the canonical bundle $\omega_{X^{[n]}}$ is not trivial as this resolution is not crepant.

In contrast, the symmetric quotient stack $[X^n/\sym_n]$ has a trivial canonical bundle for $\dim X=k$ an arbitrary even number, and is, in fact, a stack with $\IP^n[k]$-unit; see Section \ref{Pnkstacks} for some further details.

\subsection{Main construction method}\label{mainconstruction} 
In this section, given strict Enriques varieties of index $n+1$ we construct a series of varieties with $\IP^n[2k]$-varieties. In other words, we prove the implication (iii)$\Longrightarrow$(ii) of Theorem \ref{main}.

Let $E_1,\dots,E_k$ be strict Enriques varieties of index $n+1$. 
We do not assume that the $E_i$ are non-isomorphic.
For the time being, there are known examples of such $E_i$ for $n=1,2,3$; see Theorem \ref{Enriquesexistence}. We set $F:=E_1\times \dots \times E_k$.   

\begin{theorem}\label{existencethm}
The canonical cover $X:=\widetilde F$ of $F$ has a $\IP^n[2k]$-unit.
\end{theorem}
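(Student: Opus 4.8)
The plan is to compute the graded $\IC$-algebra $\Ho^*(\reg_X)$ directly and check that it has the shape $\IC[x]/x^{n+1}$ with $\deg x=2k$; condition \ref{C1} is automatic, since $X=\widetilde F$ is by construction a variety with trivial canonical bundle. First I would record the two pieces of structure I need. Each $E_i$ is a strict Enriques variety of index $n+1$ with hyperk\"ahler cover $Y_i\in\HK_{2n}$; write $y_i$ for the generator of $\Ho^2(\reg_{Y_i})$. Since $\omega_F\cong\boxtimes_{i=1}^k\omega_{E_i}$ and each $\omega_{E_i}$ has order $n+1$, restricting to the slices $E_i\times\{\pt\}$ shows $\ord(\omega_F)=n+1$, so the canonical cover $\pi\colon X=\widetilde F\to F$ is an \'etale cyclic cover of degree $n+1$ with $\pi_*\reg_X\cong\bigoplus_{s=0}^n\omega_F^{-s}$ as $\reg_F$-algebras (see Section \ref{notations}\ref{cc}).

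Next I would take cohomology and apply the K\"unneth formula, obtaining an isomorphism of graded $\IC$-algebras
\[
\Ho^*(\reg_X)\cong\bigoplus_{s=0}^n\Ho^*(\omega_F^{-s})\cong\bigoplus_{s=0}^n\ \bigotimes_{i=1}^k\Ho^*(\omega_{E_i}^{-s})\,.
\]
By Lemma \ref{Enriquescanonical} applied to each $E_i$, one has $\Ho^*(\omega_{E_i}^{-s})\cong\IC\cdot y_i^s\cong\IC[-2s]$ for $0\le s\le n$, and $\bigoplus_{s=0}^n\Ho^*(\omega_{E_i}^{-s})\cong\Ho^*(\reg_{Y_i})\cong\IC[y_i]/y_i^{n+1}$ with $\deg y_i=2$. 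Hence the right-hand side above is the ``diagonal'' subalgebra of $\bigotimes_{i=1}^k\Ho^*(\reg_{Y_i})\cong\Ho^*(\reg_{\widehat F})\cong\IC[y_1,\dots,y_k]/(y_1^{n+1},\dots,y_k^{n+1})$; equivalently, via the universal cover $\widehat F=\prod_i Y_i$, it is the invariant subalgebra $\Ho^*(\reg_{\widehat F})^H$ for the subgroup $H\subset\pi_1(F)$ defining $X$ (Lemma \ref{EulerGlemma}). In either description $\Ho^*(\reg_X)$ is spanned by the classes $x^s$, $0\le s\le n$, where $x:=y_1y_2\cdots y_k$ has degree $2k$.

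It remains to pin down the multiplicative structure, and this is the only genuinely delicate point: the product-of-Calabi--Yau non-example in Section \ref{nonexamplesection} shows that having the correct graded vector space is not enough, and one must verify that $\Ho^*(\reg_X)$ is generated by a single class in degree $2k$. Here this holds precisely because all the generators $y_i$ sit in the uniform degree $2$ (this is the content of Lemma \ref{Enriquescanonical}), so their product $x$ generates. Computing inside $\IC[y_1,\dots,y_k]/(y_i^{n+1})$ gives $x^{n+1}=\prod_i y_i^{n+1}=0$ while $x^n=\prod_i y_i^n\neq0$; the vanishing of $x^{n+1}$ can also be seen for degree reasons, since $\deg(x^{n+1})=2k(n+1)>2kn=\dim X$. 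Therefore $\Ho^*(\reg_X)\cong\IC[x]/x^{n+1}$ with $\deg x=2k$, which is condition \ref{C2}, and together with \ref{C1} this shows that $X$ has a $\IP^n[2k]$-unit. The main obstacle is exactly the control of this algebra structure; the other steps are the formal manipulations of K\"unneth and of the canonical-cover decomposition.
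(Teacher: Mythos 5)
Your proof is correct and follows essentially the same route as the paper: push forward along the canonical cover to get $\Ho^*(\reg_X)\cong\bigoplus_{s=0}^n\Ho^*(\omega_F^{-s})$, apply the K\"unneth formula, and use Lemma \ref{Enriquescanonical} to identify each summand with $\IC\cdot y_1^s\cdots y_k^s$, so that the whole algebra is generated by $x=y_1\cdots y_k$ in degree $2k$. Your extra checks --- that $\ord(\omega_F)=n+1$ and that the resulting multiplication really is that of the diagonal subalgebra, with $x^n\neq 0$ and $x^{n+1}=0$ --- are details the paper leaves implicit, but they are correct and in the same spirit.
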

\begin{proof}
By definition of the canonical cover, $\omega_X$ is trivial. Hence, Condition \ref{C1} of Definition \ref{Pnkdef} is satisfied. It is left to show that $\Ho^*(\reg_X)\cong \IC[x]/x^{n+1}$ with $\deg x= 2k$.
% Let $\pi_i\colon Y_i\to E_i$ the canonical and universal cover, i.e. $Y_i=\widetilde E_i=\widehat E_i\in \HK_{2n}$. We have
% \begin{align}
% \pi_{i*}\reg_{Y_i}\cong \reg_{E_i}\oplus \omega_{E_i}^{-1}\oplus \omega_{E_i}^{-2}\oplus \dots\oplus \omega_{E_i}^{-n} \quad\text{and}\quad \Ho^*(\omega_{E_i}^{-s})=\IC[-2s] \quad \text{for $s=0,\dots,n$.}  
% \end{align}
% Furthermore, $\omega_F\cong \omega_{E_1}\boxtimes \dots\boxtimes\omega_{E_k}$. Together with the K\"unneth formula we get
% \begin{align*}
% \Ho^*( 
% \end{align*}
Let $\pi\colon X\to F$ be the \'etale cover with $\pi_*\reg_X\cong \reg_F\oplus \omega_F^{-1}\oplus \dots\oplus \omega_F^{-n}$. Note that $\omega_F\cong \omega_{E_1}\boxtimes\dots \boxtimes \omega_{E_k}$.
By the K\"unneth formula together with Lemma \ref{Enriquescanonical}, we get 
\begin{align*}
 \Ho^*(\reg_X)&\cong \Ho^*(\reg_F)\oplus \Ho^*(\omega_F^{-1})\oplus \dots \oplus \Ho^*(\omega_F^{-n})\\
&\cong \bigl(\otimes_{i=1}^k \Ho^*(\reg_{E_i})\bigr)\oplus \bigl(\otimes_{i=1}^k \Ho^*(\omega^{-1}_{E_i})\bigr)\oplus\dots\oplus \bigl(\otimes_{i=1}^k \Ho^*(\omega^{-n}_{E_i})\bigr)\\
&\cong \IC\oplus \IC\cdot y_1\cdots y_k\oplus \dots \oplus \IC\cdot y_1^n\cdots y_k^n\\
&\cong \IC[x]/x^{n+1}
\end{align*}
where $x:=y_1\cdots y_n$ is of degree $2k$.
\end{proof}
\begin{remark}\label{quotientdecription}
Let $f_i\in \Aut(Y_i)$ be a generator of the group of deck transformations of the cover $Y_i\to E_i$. In other words,  $E_i=Y_i/\langle f_i\rangle$. Then we can describe $X$ alternatively as $X=(Y_1\times \dots \times Y_k)/G$ where 
\[
\mu_{n+1}^{k-1}\cong G=\bigl\{f_1^{a_1}\times \dots \times f_k^{a_k}\mid a_1+\dots +a_k\equiv 0 \mod n+1   \bigr\}\subset \Aut(Y_1\times \dots\times Y_n)\,.
\]
\end{remark}
\begin{remark}
In the case $n=1$, one can replace the $Y_i\in \Kthree$ by  strict Calabi--Yau varieties $Z_i$ of dimension $\dim Z_i=d_i$ together with fixed point free involutions $f_i\in \Aut(Z_i)$. Then the same construction gives a variety $X$ with $\IP^1[d_1+\dots +d_k]$-unit, i.e.\ a strict Calabi--Yau variety of dimension $\dim X=d_1+\dots +d_k$. This coincides with a construction of Calabi--Yau varieties by Cynk and Hulek \cite{CH}.  
\end{remark}

\begin{remark}\label{stillfree}
The construction still works if we replace one of the strict Enriques varieties $E_i$ by an Enriques stack. The reason is that the group $G$ still acts freely on $Y_1\times \dots\times Y_k$, even if one of the $f_i$ has fixed points; see Lemma \ref{freeautos}. 
\end{remark}

\section{Structure of varieties with $\IP^n[k]$-units}
\subsection{General properties}\label{generalfundamental}
As mentioned in Remark \ref{CYHK}, varieties with $\IP^1[k]$-units are exactly the strict Calabi--Yau varieties (not necessarily simply connected) and manifolds with $\IP^n[2]$-units are exactly the compact hyperk\"ahler manifolds. Form now on, we will concentrate on the other cases, i.e. we assume that $n> 2$ and $k>2$. By Remark
\ref{keven}, this means that $k$ is even.
\begin{lemma}\label{unicoverPnk}
 Let $X$ be a variety with a $\IP^n[k]$-unit. Then there is an \'etale cover $X'\to X$ of the form $X'\cong \prod_i Y_i\times\prod_j Z_j$ with $Y_i\in \HK$ and $Z_i\in \CY$ of even dimension.
\end{lemma}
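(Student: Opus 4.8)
The plan is to invoke the Beauville--Bogomolov decomposition and then use the algebra structure of $\Ho^*(\reg_X)$ to rule out the torus factor and constrain the Calabi--Yau factors, exactly as in the proof of Proposition \ref{HKchar}. First I would observe that Condition \ref{C1} gives $c_1(X)=0$, so there is an \'etale cover $X'\to X$ with a Beauville--Bogomolov decomposition
\[
X'=T\times \prod_i Y_i\times \prod_j Z_j\,,
\]
where $T$ is a complex torus, $Y_i\in \HK_{2d_i}$, and $Z_j\in \CY_{e_j}$. The goal is to show that $T$ is a point and that each $e_j$ is even.

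The key computation is on Euler characteristics. By Condition \ref{C2} we have $\chi(\reg_X)=\sum_{i=0}^n(-1)^{ik}=n+1$ if $k$ is even (which it is, by Remark \ref{keven}, in the range $n\ge 2$ we care about; the case $n=1$ is covered by Remark \ref{CYHK} and the cases $k=2$ likewise). Since $X'\to X$ is \'etale of some degree $m$, Lemma \ref{EulerGlemma} gives
\[
m(n+1)=m\cdot\chi(\reg_X)=\chi(\reg_{X'})=\chi(\reg_T)\cdot\prod_i\chi(\reg_{Y_i})\cdot\prod_j\chi(\reg_{Z_j})\,.
\]
Because $n+1\ge 1$, the left-hand side is nonzero. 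Now $\chi(\reg_T)=0$ whenever $T$ has positive dimension (the cohomology of an abelian variety is an exterior algebra, so the Euler characteristic of the structure sheaf vanishes), and $\chi(\reg_{Z_j})=1+(-1)^{e_j}$, which is zero when $e_j$ is odd. Hence nonvanishing of the product forces $T=\pt$ and every $e_j$ even, which is precisely the asserted shape $X'\cong\prod_i Y_i\times\prod_j Z_j$ with the $Z_j$ of even dimension.

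The main obstacle, such as it is, is just making sure the numerical input is correct: one must confirm $\chi(\reg_X)=n+1$ from the graded-algebra description (using that $k$ even makes every generator sit in even degree, so all signs are $+1$), and one must cite the standard facts $\chi(\reg_T)=0$ for a positive-dimensional torus and $\chi(\reg_Z)=1+(-1)^{\dim Z}$ for a strict Calabi--Yau. Everything else is immediate from Lemma \ref{EulerGlemma} and the Beauville--Bogomolov theorem quoted in the introduction. Note that, unlike Proposition \ref{HKchar}, this lemma makes no claim about the $Y_i$ and $Z_j$ being controlled further or about the cover being trivial; it only extracts the product structure, so no transitivity or invariant-subspace analysis is needed here. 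That finer analysis is deferred to the later sections dealing with the case $k=4$.
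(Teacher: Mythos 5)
Your proposal is correct and follows essentially the same route as the paper: the paper's proof also reduces to the first part of the proof of Proposition \ref{HKchar}, namely the Beauville--Bogomolov decomposition combined with the nonvanishing $\chi(\reg_X)=n+1\neq 0$ and multiplicativity of the Euler characteristic under \'etale covers, which kills the torus factor and any odd-dimensional Calabi--Yau factor. Your additional care about why $\chi(\reg_X)=n+1$ (evenness of $k$ via Remark \ref{keven}) is a point the paper leaves implicit, but it is the same argument.
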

\begin{proof}
Let $X'=T\times Y_i\times\prod_j Z_j$ be a Beauville--Bogomolov cover of $X$ as in Convention \ref{BBcoverconv}. The proof is the same as the first part of the proof of Proposition \ref{HKchar}: We have $\chi(\reg_X)=n+1\neq 0$, hence $\chi(\reg_{X'})\neq 0$. It follows that there cannot be a torus or an odd dimensional Calabi--Yau factor occurring in the decomposition on $X'$.
\end{proof}
Since $\Ho^*(\reg_X)=\IC[x]/x^{n+1}$ with $\deg x=k\ge 4$, we see by the K\"unneth formula that $X'\to X$ cannot be an isomorphism; compare (\ref{productcoh}). 
\begin{cor}
$X'=\widehat X$ is the universal cover of $X$. Hence, $X=X'/G$ for some finite non-trivial subgroup $\pi_1(X)\cong G\subset \Aut(X')$. In particular, a variety with $\IP^n[k]$-unit, for $n>1$, $k\ge 4$ is never simply connected but its fundamental group is always finite.  
\end{cor}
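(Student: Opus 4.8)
The plan is to recognise $X'$ as the universal cover and then extract the three assertions from elementary covering-space theory. First I would check that $X'$ is simply connected. By Lemma \ref{unicoverPnk} the \'etale cover has the form $X' \cong \prod_i Y_i \times \prod_j Z_j$ with $Y_i \in \HK$ and $Z_j \in \CY$; crucially there is no torus factor. Each hyperk\"ahler factor $Y_i$ is simply connected by the very definition of an irreducible holomorphic symplectic manifold, and each $Z_j \in \CY$ is a simply connected strict Calabi--Yau variety by the conventions of Section \ref{notations}. A finite product of simply connected spaces is simply connected, so $\pi_1(X') = 1$. Since a connected, simply connected covering space is the universal cover, this yields $X' = \widehat X$.

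Next I would invoke the standard theory of the universal cover. The covering $\widehat X \to X$ is regular, so $X = X'/G$ where $G := \mathrm{Deck}(X'/X)$ is the group of deck transformations; it acts on $X'$ through biholomorphisms, giving $G \subset \Aut(X')$, and it is canonically isomorphic to $\pi_1(X)$. The Beauville--Bogomolov covering supplied by Lemma \ref{unicoverPnk} is \emph{finite} \'etale, so its degree equals $|G|$; hence $G$, and therefore $\pi_1(X)$, is finite. Finally, by the observation immediately preceding the corollary---comparing $\Ho^*(\reg_X) \cong \IC[x]/x^{n+1}$ with $\deg x = k \ge 4$ against the K\"unneth description (\ref{productcoh}) of $\Ho^*(\reg_{X'})$---the map $X' \to X$ is not an isomorphism, so $|G| > 1$ and $G$ is non-trivial. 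In particular $X$ is not simply connected.

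I do not expect a genuine obstacle here: once Lemma \ref{unicoverPnk} is available the argument is mostly bookkeeping. The only point that needs care is the simply-connectedness of $X'$, which is precisely why the elimination of the torus factor in Lemma \ref{unicoverPnk} matters---a torus factor would contribute a free abelian summand $\IZ^{2\dim T}$ to $\pi_1(X')$ and would destroy the identification $X' = \widehat X$. With every factor simply connected, $X'$ is automatically the universal cover rather than merely an intermediate one, and the remaining assertions then follow formally.
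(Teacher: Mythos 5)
Your proposal is correct and is essentially the argument the paper intends (the corollary is stated without separate proof, as an immediate consequence of Lemma \ref{unicoverPnk} and the preceding K\"unneth remark, exactly the two ingredients you use; the same chain of reasoning appears explicitly in the proof of Proposition \ref{HKchar}). The one point you rightly single out---simple connectedness of $X'$ because the torus factor is absent and all $Y_i\in\HK$, $Z_j\in\CY$ are simply connected by the paper's conventions---is precisely how the paper identifies $X'$ with $\widehat X$, and the finiteness and non-triviality of $G\cong\pi_1(X)$ follow just as you say.
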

\subsection{The case $k=4$}\label{k=4Sec}
Now, we focus on the case $k=4$ where we can determine the decomposition of the universal cover concretely.

\begin{prop}\label{k=4prop}
 Let $n\ge 3$, and let $X$ be a variety with $\IP^n[4]$-unit. Then the universal cover $\widehat X$ is a product of two hyperk\"ahler varieties of dimension $2n$.
\end{prop}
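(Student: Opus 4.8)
Let $X$ be a variety with $\IP^n[4]$-unit, $n\ge 3$. By Lemma~\ref{unicoverPnk} and the subsequent corollary, the universal cover $\widehat X\to X$ has the form $\widehat X\cong \prod_i Y_i\times \prod_j Z_j$ with $Y_i\in \HK$ and $Z_j\in \CY$ of even dimension, and $X=\widehat X/G$ for the finite group $G\cong\pi_1(X)$. My goal is to show $\widehat X\cong Y_1\times Y_2$ with both factors in $\HK_{2n}$. The key tool throughout is that, by Lemma~\ref{EulerGlemma}, $\Ho^*(\reg_X)\cong \Ho^*(\reg_{\widehat X})^G$, and that $G$ acts on the K\"unneth generators $y_{i\alpha}$, $z_{j\beta}$ by permutations (within a fixed factor type) and scalings by roots of unity, as in Corollary~\ref{inducedaction}. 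The strategy is to exploit the assumption $k=4$ very concretely: since $\Ho^*(\reg_X)=\IC[x]/x^{n+1}$ with $\deg x=4$, there is a $G$-invariant class $x\in \Ho^4(\reg_{\widehat X})^G$ with $x^n\neq 0$ but $x^{n+1}=0$, and $\Ho^i(\reg_X)=0$ for all $i$ not divisible by $4$; in particular $\Ho^2(\reg_X)=0$ and $\Ho^{4n}(\reg_X)$ is one-dimensional.

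\emph{Step 1: kill the Calabi--Yau factors and control $\Ho^2$.} First I would analyse $\Ho^2(\reg_{\widehat X})^G=\Ho^2(\reg_X)=0$. The degree-$2$ part of $\Ho^*(\reg_{\widehat X})$ is spanned by the hyperk\"ahler classes $y_{i\alpha}$ (the $z_{j\beta}$ sit in even degrees $\ge 4$ and the CY factors have no classes in degree $2$). Vanishing of the $G$-invariants in degree $2$ forces $G$ to act on $\{y_{i\alpha}\}$ with no nonzero invariant vector. Meanwhile $x\in\Ho^4(\reg_{\widehat X})^G$ is nonzero, and by the K\"unneth description (\ref{productcoh}) the degree-$4$ part is spanned by the products $y_{i\alpha}y_{i'\alpha'}$ (including squares, where the $Y_i$ have dimension $\ge 4$) together with the $z_{j\beta}$. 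I expect to argue, using that the top class $x^n$ must be a nonzero multiple of the full top class $\prod_i\prod_\alpha y_{i\alpha}^{d_i}\cdot\prod_j\prod_\beta z_{j\beta}$ and that $\deg x=4$, that no $Z_j$ can occur: a CY generator $z_{j\beta}$ has degree $e_j\ge 4$ and cannot be produced as a power of a degree-$4$ class built from the $y$'s unless it is itself absorbed, and a counting/degree argument in the style of the proof of Proposition~\ref{HKchar} rules this out. So $\widehat X\cong \prod_i Y_i$.

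\emph{Step 2: show there are exactly two hyperk\"ahler factors, each of $\K3$ type or of dimension $2n$.} Now $x$ is a $G$-invariant degree-$4$ class, so $x=\sum_{i,\alpha}c_{i\alpha}y_{i\alpha}^2+\sum c\,y_{i\alpha}y_{i'\alpha'}$. The heart of the argument is the constraint that $\Ho^*(\reg_X)$ is generated in degree $4$ with $\dim\Ho^{4m}(\reg_X)=1$ for $0\le m\le n$ and $\dim X=4n$, so $\sum_i 2d_i\lambda_i=4n$ where $Y_i\in\HK_{2d_i}$ appears $\lambda_i$ times. Combining $\Ho^2(\reg_X)=0$ (Step 1, forcing the $y_{i\alpha}$-permutation action to be fixed-point-free on each orbit) with the one-dimensionality of each graded piece, I would show that the multiset of factors must consist of precisely two hyperk\"ahler varieties: the invariant $x$ pairs up the $y$-generators, and having more than two independent factors (or a single factor) would produce either extra degree-$4$ invariants or a nonzero degree-$2$ invariant, contradicting $\dim\Ho^4(\reg_X)=1$ or $\Ho^2(\reg_X)=0$ respectively. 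A dimension balance then gives each factor dimension $2n$, i.e.\ $\widehat X\cong Y_1\times Y_2$ with $Y_1,Y_2\in\HK_{2n}$.

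\emph{The main obstacle.} The delicate point is Step 2: ruling out configurations with more than two hyperk\"ahler factors, or with factors of unequal dimension, while only using the invariants of the permutation-and-scaling action of $G$. The condition $\Ho^2(\reg_X)=0$ tells me $G$ permutes the $y_{i\alpha}$ nontrivially within each orbit, but I must leverage the one-dimensionality of $\Ho^4(\reg_X)$ together with the algebra structure $x^{m}\neq 0$ to pin down that exactly two generators survive into the invariant $x$ and that they come from two factors of equal dimension $n$ (so $d_i=n$, $\lambda_i=1$, or a $\K3$-type pairing). This is the analogue of the $\ell>1$ contradiction in the proof of Proposition~\ref{HKchar}, but here the target invariant $x$ lives in degree $4$ rather than $2$, which is exactly what makes \emph{two} factors (rather than one) the right answer, and carefully counting invariant monomials of degrees $2$ and $4$ under Corollary~\ref{inducedaction} is where the real work lies.
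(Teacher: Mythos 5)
Your framework (Lemma \ref{unicoverPnk}, the identification $\Ho^*(\reg_X)\cong\Ho^*(\reg_{\widehat X})^G$, Corollary \ref{inducedaction}) matches the paper's, but the engine of your Step 2 cannot work, and the paper itself shows why: the proposition is \emph{false} for $n=2$. Section \ref{n=2} constructs a variety with $\IP^2[4]$-unit whose universal cover has three hyperk\"ahler factors of unequal dimension (two K3 surfaces and one $Y'\in\HK_4$), and that configuration satisfies every constraint you propose to use -- the degree-$2$ invariants vanish, the degree-$4$ invariants are one-dimensional (spanned by a class of the form $\sum_i y_iy'$), the invariant algebra is generated in degree $4$, and the dimensions balance. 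So your claim that more than two factors ``would produce either extra degree-$4$ invariants or a nonzero degree-$2$ invariant'' is not true as an invariant-theoretic statement, and since your argument nowhere uses $n\ge 3$, it would prove the false $n=2$ case if it were valid. What actually rules out the analogous configuration $\widehat X\cong Y^n\times Y'$ ($Y\in\Kthree$, $Y'\in\HK_{2n}$) for $n\ge 3$ in the paper is arithmetic, not invariant counting: one-dimensionality of the invariants in degrees $4$ and $8$ forces $G$ to act transitively on the monomial sets $\{y_1,\dots,y_n\}$ and $\{y_iy_j\mid i<j\}$ appearing in $x$ and $x^2$, while Lemma \ref{EulerGlemma} gives $\ord G=\chi(\reg_{\widehat X})/(n+1)=2^n$; hence $n\mid 2^n$ and $\binom{n}{2}\mid 2^n$, a contradiction exactly when $n\ge 3$. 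Your sketch never invokes Euler characteristics or any divisibility, so the step where $n\ge 3$ must enter is entirely missing.

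Step 1 has a smaller error of the same kind: you argue that no $Z_j$ can occur because a CY generator of degree $e_j\ge 4$ ``cannot be produced as a power of a degree-$4$ class built from the $y$'s''. But $e_j=4$ is allowed, and then $z_j$ lives in the same degree as $x$ and must in fact \emph{appear} in $x$ (since $x^n$ spans the top degree); degree considerations alone exclude nothing. The paper's route is: all $e_j=4$; split $x$ into its $z$-part and its $y$-part, each $G$-invariant by Corollary \ref{inducedaction}, so one vanishes; this leaves either no CY factors or the all-CY case $\widehat X\cong Z_1\times\cdots\times Z_n$, and the latter is again eliminated by divisibility ($n\mid\ord G\mid 2^n$ together with $n+1\mid 2^n$), not by degree. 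Finally, the hardest part of the paper's case analysis -- two orbits of factors with $\ell'>1$, killed by the coefficient identity $3C=C$ for the coefficient of $y_hy_iy_j(y'_{i'})^2y'_{j'}$ in $x^3$ -- has no counterpart in your sketch. In short: correct framework and correct final answer, but the actual proof mechanism (invariance of individual K\"unneth summands forcing dichotomies, combined with Euler-characteristic divisibility, run through a multi-case analysis of $x$, $x^2$, and $x^3$) is absent, and the purely degree-$2$/degree-$4$ invariant argument you propose in its place provably cannot close the gap.
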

We divide the proof of this statement into several lemmas. 
So, in the following, let $X$ be a variety with a $\IP^n[4]$-unit where $n\ge 3$.
\begin{lemma}
The universal cover $\widehat X$ of $X$ is a product of compact hyperk\"ahler manifolds.  
\end{lemma}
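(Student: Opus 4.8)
The plan is to start from the decomposition already supplied by Lemma \ref{unicoverPnk}: the universal cover splits as $\widehat X\cong \prod_i Y_i\times\prod_j Z_j$ with $Y_i\in\HK$ and $Z_j\in\CY_{e_j}$ of even dimension $e_j\ge 4$, so it suffices to rule out any Calabi--Yau factor. Write $R:=\Ho^*(\reg_{\widehat X})$ and $G:=\pi_1(X)$; by Lemma \ref{EulerGlemma} we have $R^G=\Ho^*(\reg_X)\cong\IC[x]/x^{n+1}$ with $\deg x=4$. By \eqref{productcoh}, $R$ carries, besides its cohomological grading, a second grading by the number of $z$-factors in a monomial, and by Corollary \ref{inducedaction} every $g\in G$ sends each generator $y_i$ (resp. $z_j$) to a scalar multiple of a generator of the same kind, so this ``$z$-count'' grading is $G$-stable. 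Since $R_{4n}=\IC\cdot s$ is one-dimensional, spanned by $s=\prod_i y_i^{d_i}\prod_j z_j$, and $0\neq x^n\in R_{4n}$, I get $x^n=c\,s$ with $c\neq 0$; in particular every $z_j$ must occur in $x^n$.

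Next I would run a degree-and-grading dichotomy. Because $\deg z_j=e_j$ while $\deg x=4$, a factor $z_j$ can enter a power of $x$ only if $z_j$ is itself a degree-$4$ summand of $x$; together with the previous paragraph this forces $e_j=4$ for every CY factor present. Now decompose $x=x_y+x_z$ into its $z$-count $0$ and $z$-count $1$ parts (there is no room for $z$-count $\ge 2$ in degree $4$). As the $z$-count grading is $G$-stable and $x$ is $G$-invariant, both $x_y$ and $x_z$ lie in $(R^G)_4=\IC\,x$, which is one-dimensional; hence one of them vanishes. If $x_z=0$, then $x$, and therefore $x^n$, involves no $z_j$, which contradicts $x^n=c\,s$ unless there are no Calabi--Yau factors at all --- exactly the desired conclusion.

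The remaining case $x_y=0$ is the main obstacle, since (in contrast to the proof of Proposition \ref{HKchar}, where $\deg x=2<e_j$) a pure degree count cannot exclude four-dimensional Calabi--Yau factors. Here $x=x_z=\sum_j c_j z_j$, so $x^n$ has $z$-count $n$ and contains no $y$-variable; matching this with $s$ forces $\widehat X$ to have no hyperk\"ahler factor and exactly $n$ Calabi--Yau factors, all of dimension $4$, with $x^n\propto z_1\cdots z_n$ and every $c_j\neq 0$. Since $(R^G)_4=\IC\,x$ is one-dimensional while each isomorphism class of factors spans a $G$-stable subspace, all $Z_j$ must be isomorphic and $G$ must permute $\{z_j\}$ transitively, whence $n\mid\abs G$. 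Finally I would compute Euler characteristics: $\chi(\reg_{\widehat X})=\prod_j\chi(\reg_{Z_j})=2^n$ and $\chi(\reg_X)=n+1$, so Lemma \ref{EulerGlemma} gives $\abs G=2^n/(n+1)$; thus $n+1\mid 2^n$ and $n\mid 2^n/(n+1)$, forcing both $n$ and $n+1$ to be powers of $2$. Two consecutive powers of $2$ give $n=1$, contradicting $n\ge 3$. Hence the case $x_y=0$ is impossible, and we conclude that $\widehat X\cong\prod_i Y_i$ is a product of hyperk\"ahler manifolds.
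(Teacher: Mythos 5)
Your proof is correct and follows essentially the same route as the paper's: force $e_j=4$ via the top-degree class $x^n$, split $x$ into the $G$-invariant ``pure $y$'' and ``pure $z$'' parts so that one must vanish, and kill the all--Calabi--Yau case by combining transitivity of the $G$-action on the $z_j$ with the Euler-characteristic constraint $n\mid\ord G\mid 2^n$ and $n+1\mid 2^n$. The only difference is presentational: your explicit ``$z$-count grading'' and the remark that transitivity needs all $c_j\neq 0$ make precise what the paper handles by absorbing coefficients into the generators.
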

\begin{proof}
By Lemma \ref{unicoverPnk}, we have $\widehat X\cong \prod_i Y_i\times \prod_j Z_j$ with $Y_i\in \HK_{2d_i}$ and $Z_j\in \CY_{e_j}$ with $e_i\ge 4$ even.
Let $\pi_1(X)\cong G\subset \Aut(\widehat X)$ such that $X=\widehat X/G$. 
Analogously to the proof of Proposition \ref{HKchar}, we see that there is an $x\in\Ho^4(\reg_{\widehat X})^G\cong \Ho^4(\reg_X)$ such that $x^n$ is a non-zero multiple of the generator $\prod_i y_i^{d_i}\cdot \prod_j z_j$ of $\Ho^{4n}(\reg_{\widehat X})$. In particular, all the $z_j$ have to occur in the expression of $x\in\Ho^4(\reg_{\widehat X})$ in terms of the K\"unneth formula. Hence, $e_j=4$ for all $j$. We get
\begin{align}\label{tformula}
 x=\sum_j z_j + \quad\text{terms involving the $y_i$} 
\end{align}
where we absorb possible non-zero coefficients in the choice of the generators $z_j$ of $\Ho^4(\reg_{Z_j})$. Both summands of (\ref{tformula}) are $G$-invariant. This follows by the $G$-invariance of $x$ together with Corollary \ref{inducedaction}. Hence, one of the two summands must vanish. Consequently, $\widehat X$ either has no Calabi--Yau or no hyperk\"ahler factors, i.e. $\widehat X=\prod Y_i$ or $\widehat X=\prod Z_j$. 

Let us assume for a contradiction that the latter is the case. We have $e_j=\dim Z_j= 4$ for all $j$. Since $\dim \widehat X=\dim X=4n$, there must be $n$ factors $Z_j\in \CY_4$ of $\widehat X$. Hence, $\chi(\reg_{\widehat X})=2^n$. 
By Lemma \ref{EulerGlemma}, we have 
\begin{align}\label{EulerG}
\chi(\reg_{\widehat X})=\chi(\reg_X)\cdot \ord (G)\,. 
\end{align}
Hence, $\chi(\reg_X)=n+1 \mid 2^n$. 
Furthermore, $G$ must act transitively on $\{z_1, \dots, z_n\}$. Otherwise, there would be $G$-invariant summands of $x=\sum z_j$ contradicting the assumption that $h^4(\reg_X)=1$. Hence, $n\mid \ord G \mid 2^n$ which, for $n\ge 2$, is not consistent with $n+1\mid 2^n$.
\end{proof}

Hence, we have $\widehat X\cong \prod_{i\in I} Y_i$ with $Y_i\in \HK_{2d_i}$ for some finite index set $I$ and there is a $G$-invariant 
\begin{align}\label{tHKcase}
 0\neq x=\sum_i c_{ii} y_i^2 + \sum_{i\neq j} c_{ij}y_i y_j\in \Ho^4(\reg_{\widehat X}) \quad,\quad c_{ij}\in \IC\,.
\end{align}
Again by Corollary \ref{inducedaction}, both summands in (\ref{tHKcase}) are $G$-invariant so that one of them must be zero.

\begin{lemma}\label{yiyj}
There is a non-zero $G$-invariant $x\in\Ho^4(\reg_{\widehat X})$ of the form $x=\sum_{i\neq j}c_{ij}\cdot y_iy_j$.
\end{lemma}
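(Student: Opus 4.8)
The goal is to rule out the possibility that $x$ has a nonzero "diagonal" part $\sum_i c_{ii}y_i^2$, forcing it into the "off-diagonal" form $\sum_{i\neq j}c_{ij}y_iy_j$. Since we already established that both summands in (\ref{tHKcase}) are separately $G$-invariant and at least one must vanish, the plan is to assume for contradiction that the off-diagonal part vanishes, so that $0\neq x=\sum_i c_{ii}y_i^2$, and derive a contradiction with the defining properties of a $\IP^n[4]$-unit.

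The key structural input is that $x^n$ must be a nonzero multiple of the generator $\prod_i y_i^{d_i}$ of $\Ho^{4n}(\reg_{\widehat X})$, since $\Ho^*(\reg_X)\cong \IC[x]/x^{n+1}$ requires $x^n\neq 0$. First I would examine what $x=\sum_i c_{ii}y_i^2$ forces on the dimensions: because $y_i^{d_i}=0$ in $\Ho^*(\reg_{Y_i})$ while $y_i^{d_i-1}\neq 0$ (recall $\Ho^*(\reg_{Y_i})\cong \IC[y_i]/y_i^{d_i+1}$ with $\deg y_i=2$, so the top nonzero power is $y_i^{d_i}$), raising $x$ to the $n$-th power via the multinomial expansion only produces monomials $\prod_i y_i^{2a_i}$ with $\sum_i a_i=n$. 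For $x^n$ to hit the top class $\prod_i y_i^{d_i}$ we would need each exponent $2a_i$ to equal $d_i$, i.e. every $d_i$ must be even and $a_i=d_i/2$; comparing total degrees gives $\sum_i d_i = 2n$, consistent with $\dim\widehat X=4n$. The real obstruction, though, is dimension one of $\Ho^4(\reg_X)$: I would next compute $\Ho^4(\reg_{\widehat X})^G$ and show that the space of $G$-invariants of the purely diagonal form is too large, or alternatively track the nilpotency index.

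The cleanest route, I expect, is a Hilbert-series / nilpotency count. If $x=\sum_i c_{ii}y_i^2$ with, say, $r$ of the coefficients nonzero, then the subalgebra of $\Ho^*(\reg_{\widehat X})$ generated by $x$ must have the powers $x,x^2,\dots,x^n$ all nonzero and linearly independent, with $x^{n+1}=0$. I would argue that $x^{n+1}=0$ combined with $x^n\neq 0$ pins down the multinomial support so tightly that one of the intermediate powers $x^m$ fails to be a scalar multiple of a single monomial, which is irrelevant, or — more to the point — that the graded dimension $h^4(\reg_X)=1$ is violated. The sharpest contradiction should come from the \textbf{degree-$4n-4$ piece}: by Serre duality (the $\IP^n[4]$-unit forces $h^{4n-4}(\reg_X)=h^4(\reg_X)=1$), the class $x^{n-1}$ spans $\Ho^{4n-4}(\reg_{\widehat X})^G$. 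Expanding $x^{n-1}=(\sum_i c_{ii}y_i^2)^{n-1}$ and comparing with the requirement that this one-dimensional invariant space pair nondegenerately with the one-dimensional $\Ho^4(\reg_X)^G$ under the product $\Ho^4\times\Ho^{4n-4}\to\Ho^{4n}$ should produce the constraint that only one $d_i$ can be odd-versus-even in a way incompatible with the even-$d_i$ requirement from $x^n\neq 0$.

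The step I expect to be the genuine obstacle is converting the vague "one of the two summands must vanish" into a clean contradiction purely on the diagonal side, because the diagonal form $x=\sum_i c_{ii}y_i^2$ is a priori compatible with $x^n\neq 0$ (take a single K3 factor, $d_i=1$, and one would worry about small cases). Here I would lean on the standing hypothesis $n\ge 3$ and on the transitivity of the $G$-action forced by $\dim\Ho^4(\reg_X)=1$: as in the previous lemma, $G$ must act transitively on the set of indices appearing in $x$, hence all the corresponding $Y_i$ are mutually isomorphic of a common dimension $2d$, and an Euler-characteristic count via Lemma \ref{EulerGlemma} — giving $\chi(\reg_X)=n+1$ dividing $\prod_i \chi(\reg_{Y_i})=(d+1)^{|I|}$ together with a transitivity divisibility $|I|\mid \ord G$ — should collapse the diagonal case exactly as the Calabi--Yau case collapsed in the preceding lemma, isolating the off-diagonal form as the only survivor and completing the proof.
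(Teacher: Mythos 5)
Your overall skeleton --- assume for contradiction that $x=\sum_i c_{ii}y_i^2\neq 0$, deduce from $h^4(\reg_X)=1$ that $G$ acts transitively on the indices, conclude $\widehat X\cong Y^\ell$ with $Y\in \HK_{2d}$, and then look for a divisibility contradiction via Lemma \ref{EulerGlemma} --- is the same as the paper's, and your observation that $x^n\neq 0$ forces every $d_i$ to be even is correct and is used by the paper as well. But there is a genuine gap at the decisive step: you never pin down the value of $d$, and the divisibility data you list ($n+1\mid (d+1)^\ell$, $\ell\mid\ord G$, $d\ell=2n$) does \emph{not} ``collapse exactly as the Calabi--Yau case'' for general even $d$. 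In the Calabi--Yau case the factors were forced to have dimension $4$, so $\chi(\reg_{\widehat X})=2^n$ and one gets the clean contradiction $n\mid 2^n$, $n+1\mid 2^n$. Here, if $d+1$ has two distinct prime factors (say $d=32$, $d+1=33$, $\ell=11^b$), your conditions only say that all prime factors of $\ell$ and of $n+1=(d/2)\ell+1$ divide $d+1$, which leads to exponential Diophantine equations such as $16\cdot 11^b+1=3^c$; ruling these out needs Zsygmondy-type arguments, not the two-line coprimality count you invoke. Your middle paragraphs (the Hilbert-series count, the degree-$(4n-4)$ pairing, ``only one $d_i$ can be odd-versus-even'') are too vague to substitute for this and do not parse into a usable constraint.

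What is missing is one more application of the trick you already used in degree $4$: apply Corollary \ref{inducedaction} to $x^2$. Writing $x^2=\sum_i y_i^4+2\sum_{i\neq j}y_i^2y_j^2$, both summands are separately $G$-invariant, so since $h^8(\reg_X)=1$ one of them must vanish. The second summand cannot vanish, because $x\neq 0$ forces $y_i^2\neq 0$ (i.e.\ $d\geq 2$) and because $\ell\geq 2$ --- a point you also do not address; the paper gets it from Corollary \ref{quotientcanonical}. Hence $y_i^4=0$, i.e.\ $d\leq 3$, and combined with your evenness observation this gives $d=2$. Only now does the endgame close: $\ell=n$, $\chi(\reg_{\widehat X})=3^n$, and (\ref{EulerG}) together with transitivity of $G$ on $\{y_1,\dots,y_n\}$ gives $n\mid 3^n$ and $n+1\mid 3^n$, which is impossible since the consecutive integers $n$ and $n+1$ (with $n\geq 3$) cannot both be powers of $3$. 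Without the $x^2$ step your argument does not terminate.
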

\begin{proof}
Let us assume for a contradiction that we are in the case that $x=\sum_i y_i^2$ where we hide the coefficients $c_{ii}$ in the choice of the $y_i$. By the same arguments as above, $G$ must act transitively on the set of $y_i$. Hence, by Corollary \ref{inducedaction}, we have $\widehat X=Y^\ell$, $Y\in \HK_{2d}$ with $d\ell=2n$. We must have $\ell\ge 2$ by Corollary \ref{quotientcanonical}. 
Then 
\[
 x^2=\sum_i y_i^4 +2\sum_{i\neq j} y_i^2y_j^2 \,\in \Ho^8(\reg_{\widehat X})^G
\]
and both summands are $G$-invariant. Hence, one of them must be zero and the only possibility for that to happen is that $d<4$. Since $x^n$ is a scalar multiple of the generator $y_1^dy_2^d\cdots y_\ell^d$ of $\Ho^{4n}(\reg_{\widehat X})$, we must have $d=2$. Hence, $\ell=n$ and $\chi(\reg_{\widehat X})=3^n$. By (\ref{EulerG}) and the fact that $G$ acts transitively on $\{y_1,\dots,y_n\}$, we get the contradiction $n\mid 3^n$ and $n+1\mid 3^n$.
\end{proof}

\begin{lemma}
 We have $|I|=2$ which means that $\widehat X\cong Y\times Y'$ with $Y,Y'\in \HK$.
\end{lemma}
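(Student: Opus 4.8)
The plan is to extract the conclusion from the single element $x^2\in\Ho^8(\reg_{\widehat X})^G\cong\Ho^8(\reg_X)$, which is one-dimensional and spanned by $x^2$ (recall $\Ho^*(\reg_X)\cong\IC[x]/x^{n+1}$ with $\deg x=4$ and $n\ge 3$, so $x^n\ne 0$ forces $x^2\ne 0$). First I would decompose $\Ho^8(\reg_{\widehat X})$ according to the shape (a partition of $4$) of the degree-$8$ monomials in the $y_i$: the five subspaces $\langle y_i^4\rangle$, $\langle y_i^3y_j\rangle$, $\langle y_i^2y_j^2\rangle$, $\langle y_i^2y_jy_k\rangle$, $\langle y_iy_jy_ky_l\rangle$ (indices distinct). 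By Corollary \ref{inducedaction}, every $g\in G$ sends $y_i\mapsto\rho_{i,g}\,y_{\sigma_g(i)}$, a permutation-with-roots-of-unity, so it preserves each shape subspace. Hence $\Ho^8(\reg_{\widehat X})^G$ is the direct sum of the invariants of each shape class, and since that total is one-dimensional, exactly one shape class can carry a nonzero invariant.

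Next I would compute the shape-components of $x^2=\bigl(\sum_{i\ne j}c_{ij}y_iy_j\bigr)^2$. Because $x$ has no diagonal $y_i^2$ terms (Lemma \ref{yiyj}), only the shapes $(2,2)$, $(2,1,1)$, $(1,1,1,1)$ appear, with coefficients $c_{ij}^2$ on $y_i^2y_j^2$, with $2c_{ij}c_{ik}$ on $y_i^2y_jy_k$, and with $2(c_{ij}c_{kl}+c_{ik}c_{jl}+c_{il}c_{jk})$ on $y_iy_jy_ky_l$. The crucial observation is that the $(2,2)$-component $\sum_{i<j}c_{ij}^2\,y_i^2y_j^2$ is \emph{automatically} nonzero, since some $c_{ij}\ne 0$. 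Thus $x^2$ lives entirely in the $(2,2)$ shape class, which forces the $(2,1,1)$- and $(1,1,1,1)$-components to vanish.

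Vanishing of the $(2,1,1)$-component gives $c_{ij}c_{ik}=0$ for all distinct $i,j,k$; that is, the support graph $\Gamma$ on $I$ (edge $i\sim j$ iff $c_{ij}\ne 0$) has maximal degree $\le 1$, so $\Gamma$ is a disjoint union of edges. Vanishing of the $(1,1,1,1)$-component then gives $c_{ij}c_{kl}=0$ for any two \emph{disjoint} edges $\{i,j\},\{k,l\}$, which is impossible as soon as $\Gamma$ contains two edges. Hence $\Gamma$ has exactly one edge $\{a,b\}$ and $x=c_{ab}\,y_ay_b$. Finally, since $x^n=c_{ab}^n\,y_a^ny_b^n$ must be a nonzero multiple of the top class $\prod_{i\in I}y_i^{d_i}$ of $\Ho^{4n}(\reg_{\widehat X})$, no index outside $\{a,b\}$ may occur, so there are no isolated vertices and $I=\{a,b\}$, giving $|I|=2$; comparing the two sides also yields $d_a=d_b=n$, which matches the dimension claim of Proposition \ref{k=4prop}.

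The main obstacle is purely the bookkeeping of the three shape-components of $x^2$ together with the one-dimensionality of $\Ho^8(\reg_X)$; once the observation that the $(2,2)$-part is unconditionally nonzero is in place, the remaining combinatorics (matching $\Rightarrow$ single edge $\Rightarrow |I|=2$) are routine and require no further input beyond Corollary \ref{inducedaction} and Lemma \ref{EulerGlemma}.
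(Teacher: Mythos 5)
There is a genuine gap, and it sits exactly at the step you call the ``crucial observation.'' You claim that the $(2,2)$-component $\sum_{i<j}c_{ij}^2\,y_i^2y_j^2$ of $x^2$ is automatically nonzero once some $c_{ij}\neq 0$. This is false: the classes $y_i$ are nilpotent, with $\Ho^*(\reg_{Y_i})\cong\IC[y_i]/y_i^{d_i+1}$ for $Y_i\in\HK_{2d_i}$ (compare (\ref{productcoh})), so $y_i^2=0$ whenever $Y_i\in\Kthree$ is a K3 surface. If every pair $\{i,j\}$ with $c_{ij}\neq 0$ involves a K3 factor, the $(2,2)$-component vanishes identically, and then nothing forces the $(2,1,1)$- and $(1,1,1,1)$-components to be zero. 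The same oversight contaminates your next deduction: vanishing of the $(2,1,1)$-component only gives $c_{ij}c_{ik}=0$ for those $i$ with $y_i^2\neq 0$; when $y_i^2=0$ the monomial $y_jy_i^2y_k$ dies on its own and imposes no condition on the coefficients, so your support graph $\Gamma$ need not be a matching. A tell-tale sign that something must be wrong: your argument uses $n\ge 3$ only to get $x^2\neq 0$, which already holds for $n=2$; but the lemma is \emph{false} for $n=2$. Indeed, Section \ref{n=2} of the paper exhibits a variety with $\IP^2[4]$-unit whose universal cover is $Y\times Z^2$ with $Y\in\HK_4$ and $Z\in\Kthree$, i.e.\ $|I|=3$, where $x=yz_1+\sqrt{-1}\,yz_2$ and $x^2=2\sqrt{-1}\,y^2z_1z_2$: the $(2,2)$-part vanishes precisely because $z_1^2=z_2^2=0$, and only the $(2,1,1)$-part survives (a ``star'' configuration, not a matching).

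This is not a small repair: handling the K3 cases is the actual content of the paper's proof. After splitting $x^2$ into the three shape components as in (\ref{tsquare}) (which you do identically), the paper observes that if two of them vanish then \emph{some factor must be a K3 surface}, and then rules out the resulting configurations ($\widehat X\cong Y^{2n}$, $\widehat X\cong Y^n\times Y'$, $\widehat X\cong Y^n\times(Y')^{\ell'}$ with $Y\in\Kthree$) by completely different tools: the Euler characteristic relation $\chi(\reg_{\widehat X})=\chi(\reg_X)\cdot\ord G$ from Lemma \ref{EulerGlemma}, transitivity of the $G$-action on the relevant sets of monomials, divisibility contradictions such as $n\mid 2^n$ versus $n+1\mid 2^n$ and $\binom n2\mid 2^n$ (this is where $n\ge 3$ enters), and, in the hardest subcase, the coefficient identity forcing $3C=C$. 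Your combinatorial analysis of the support graph is correct and clean \emph{under the additional hypothesis that no $Y_i$ is a K3 surface} (all $d_i\ge 2$), but without an argument excluding K3 factors, the proof does not establish the lemma.
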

\begin{proof}
Let $0\neq x=\sum_{i\neq j}c_{ij} y_iy_j\in \Ho^4(\reg_{\widehat X})^G$ with $c_{ij}\in \IC$, some of which might be zero, as in Lemma \ref{yiyj}. 
As already noted above, we have $|I|\ge 2$ by Corollary \ref{quotientcanonical}.
Let us assume that $|I|\ge 3$. This assumption will be divided into several subcases, each of which leads to a contradiction.
We have 
\begin{align}\label{tsquare}
 x^2=\sum_{i\neq j} c_{ij}^2\cdot y_i^2y_j^2 +\sum_{h\neq i\neq j}c_{hi}c_{ij}\cdot y_hy_i^2y_j+\sum_{g\neq h\neq i\neq j} \hat c_{ghij}\cdot y_gy_hy_iy_j \,,\quad\hat c_{ghij}=c_{gh}c_{ij}+\dots
\,.\end{align}
All three summands are $G$-invariant by Corollary \ref{inducedaction}, hence two of them must be zero.
For one of the first two summands of (\ref{tsquare}) to be zero, the square of some $y_i$ must be zero, i.e. some $Y_{i_0}$ must be a K3 surface. Write the index set $I$ of the decomposition $\widehat X=\prod_{i\in I} Y_i$ as $I=N\uplus M$ where $N=G\cdot i_0$ is the orbit of $i_0$. Here we consider the $G$-action on $I$ given by the permutation part of the autoequivalences in $G\subset \Aut(\widehat X)$; see Lemma \ref{productauto}. With this notation, $Y_j\cong Y_{i_0}\in \Kthree$ for $j\in N$.

Let us first consider the case that $G$ acts transitively on the factors of the decomposition of $\widehat X$, i.e. $I=N$. Then, by dimension reasons, $|I|=2n$. In other words, $\widehat X\cong Y^{2n}$ with $Y\in \Kthree$. Hence, $\chi(\reg_{\widehat X})=2^{2n}$. By (\ref{EulerG}) we get the contradiction $2n\mid 2^{2n}$ and $n+1\mid 2^{2n}$. 

In the case that $M\neq 0$, all the non-zero coefficients $c_{ij}$ in the $G$-invariant $x=\sum_{i\neq j} c_{ij}y_iy_j$ must be of the form $i\in N$ and $j\in M$ (or the other way around). Indeed, otherwise we would have $G$-invariant proper summands of $x$ in contradiction to the assumption $\Ho^4(\reg_{\widehat X})^G=\langle x\rangle$. Furthermore, for all $i\in N$ there must be a non-zero $c_{ii'}$ and for all $j'\in M$ there must be a non-zero $c_{jj'}$ since $x^n$ is a non-zero multiple of the generator $\prod_{i\in N} y_i \cdot \prod_{j\in M}y_j^{d_j}$ of $\Ho^{4n}(\reg_{\widehat X})$. Hence, to avoid proper $G$-invariant summands of $x$, the group $G$ must also act transitively on $M$. It follows that 
$\widehat X\cong Y^\ell\times (Y')^{\ell'}$ 
where $\ell=|N|$, $\ell'=|M|$, $Y\in \Kthree$, and $Y'\in \HK_{2d'}$ for some $d'$. Now, $x^n$ is a non-zero multiple of 
\[\prod_{i=1}^\ell y_i\cdot \prod_{j=1}^{\ell'}(y_j')^{d'}\in \Ho^{4n}(\reg_{\widehat X})\,.\] 
Since all the non-zero summands of $x$ are of the form $c_{ij} y_iy_j'$, we get that $\ell=n=\ell'\cdot d'$. In particular,
\begin{align}\label{case3}\widehat X\cong Y^n\times (Y')^{\ell'}\,.\end{align}

First, we consider for a contradiction the case that $\ell'=1$, hence $\widehat X=Y^n\times Y'$ with $Y\in \Kthree$ and $Y'\in \HK_{2n}$. Then, by (\ref{EulerG}), we get $\ord G=2^n$. We have (up to coefficients which we avoid by the correct choice of the $y_i$), $x=\sum_{i=1}^n y_iy'$. Accordingly, $x^2=\sum_{i\neq j} y_iy_j(y')^2$. Hence, $G$ acts transitively on $\{y_1,\dots ,y_n\}$ as well as on $\{y_iy_j\mid 1\le i< j\le n\}$. We get the contradiction $n\mid 2^n$ and $\binom n2 \mid 2^n$.

Note that, for this to be a contradiction, we need the assumption $n\ge 3$. Indeed, in Section \ref{n=2}, we will see examples of a variety $X$ with a $\IP^2[4]$-unit whose canonical covers are of the form $\widehat X= Y^2\times Y'$ with $Y\in \Kthree$ and $Y'\in \HK_4$. 

Now, let $\ell'>1$ in (\ref{case3}). Then, we get
\begin{align}\label{t22}
 x^2=\sum_{i\neq j, i'}c_{ii'}c_{ji'}\cdot y_iy_{j}(y'_{i'})^2 + \sum_{i\neq j, i'\neq j'}\tilde c_{iji'j'}\cdot y_iy_jy'_{i'}y'_{j'} \quad,\quad \tilde c_{iji'j'}=c_{ii'}c_{jj'}+c_{ij'}c_{ji'} 
\end{align}
where both summands are $G$-invariant. Hence, in order to avoid linearly independent classes in $\Ho^8(\reg_{\widehat X})^G$, one of them must be zero. 

Let us assume for a contradiction that all the $\tilde c_{iji'j'}$ are zero. Then all the $c_{ii'}$ with $i\in N$ and $i'\in M$ are non-zero. Indeed, as mentioned above, given $i\in N$ and $i'\in M$, there exist $j\in N$ and $j'\in M$ such that $c_{ij'}\neq 0 \neq c_{ji'}$. By $\tilde c_{iji'j'}=0$, it follows that also $c_{ii'}\neq 0\neq c_{jj'}$. Given pairwise distinct $h,i,j\in N$ and $i',j'\in M$ we consider the following term, which is the coefficient of $y_hy_iy_j(y'_{i'})^2y_{j'}$ in $x^3$,  
\begin{align}\label{Cformula}C:&=c_{hi'}c_{ii'}c_{jj'}+c_{hi'}c_{ij'}c_{ji'}+c_{hj'}c_{ii'}c_{ji'}\\
&=c_{hi'}\tilde  c_{iji'j'} + c_{hj'}c_{ii'}c_{ji'}\notag \\
&=c_{ii'}\tilde  c_{hji'j'} + c_{hi'}c_{ij'}c_{ji'}\notag \\
&=c_{ji'}\tilde  c_{hii'j'} + c_{hi'}c_{ii'}c_{jj'}\notag \,.
\end{align}
By the vanishing of the $\tilde c$, we get 
\[
C=c_{hi'}c_{ii'}c_{jj'}=c_{hi'}c_{ij'}c_{ji'}=c_{hj'}c_{ii'}c_{ji'}\,.
\]
By the non-vanishing of all the $c$, we get $C\neq 0$. But, at the same time, by (\ref{Cformula}), we have $3C=C$; a contradiction.

We conclude that the first summand of (\ref{t22}) is zero. This can only happen for $(y_{i'}')^2=0$, hence $Y'\in \Kthree$. Then $\chi(\reg_{\widehat X})=2^{2n}$ and, as before, we get the contradiction that $n\mid 2^{2n}$ and $n+1\mid 2^{2n}$. 
\end{proof}

\begin{proof}[Proof of Proposition \ref{k=4prop}]
By now, we know that $\widehat X=Y\times Y'$ with $Y\in\HK_{2d}$, $Y'\in\HK_{2d'}$, and $x=yy'$. We have $d+d'=2n$. Furthermore, $0\neq x^n=y^n(y')^n$. Hence, $d=n=d'$. 
\end{proof}

\begin{remark}
The proof of Proposition \ref{k=4prop} becomes considerably simpler if one assumes that $n+1$ is a prime number. In this case, it follows directly by Lemma \ref{EulerGlemma} that the universal cover must have a factor $Y\in \HK_{2n}$. Hence, there are much fewer cases one has to deal with. 
\end{remark}
\begin{theorem}\label{4theorem}
Let $n\ge 3$, and let $X$ be a variety with a $\IP^n[4]$-unit. 
\begin{enumerate} 
\item\label{narb} We have $X=(Y\times Y')/G$ with $Y,Y'\in \HK_{2n}$. The group $\pi_1(X)\cong G\subset \Aut(Y\times Y')$ acts without fixed points, and is of the form $G=\langle f \times f'\rangle$ with $f\in \Aut(Y)$ and $f\in \Aut(Y')$ purely symplectic of order $n+1$.
\item\label{nprime} If $n+1=p^\nu$ is a prime power, at least one of the cyclic groups $\langle f\rangle\subset \Aut(Y)$ and  $\langle f'\rangle \subset\Aut(Y')$ acts without fixed points.     
\end{enumerate}
\end{theorem}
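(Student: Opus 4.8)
The plan is to build on Proposition \ref{k=4prop}, which already yields $\widehat X\cong Y\times Y'$ with $Y,Y'\in\HK_{2n}$ and a $G$-invariant generator of the form $x=yy'$, where $G\cong\pi_1(X)\subset\Aut(\widehat X)$ acts freely and $X=\widehat X/G$. I would first record that $\ord G=n+1$: the algebra $\Ho^*(\reg_X)\cong\IC[x]/x^{n+1}$ is concentrated in even degrees, so $\chi(\reg_X)=n+1$, whereas $\chi(\reg_{\widehat X})=\chi(\reg_Y)\,\chi(\reg_{Y'})=(n+1)^2$; Lemma \ref{EulerGlemma} then gives $(n+1)^2=(n+1)\cdot\ord G$, i.e.\ $\ord G=n+1$.

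The main obstacle, and the technical core of part \ref{narb}, is to show that no element of $G$ exchanges the two hyperk\"ahler factors. Suppose for contradiction that some $g_\ast\in G$ does; by Lemma \ref{productauto} this forces $Y\cong Y'$ and $g_\ast=(h_1\times h_2)\circ\tau$ with $h_1,h_2\in\Aut(Y)$ and $\tau$ the transposition, so that $g_\ast(p,q)=(h_1(q),h_2(p))$ and $g_\ast^2=(h_1h_2)\times(h_2h_1)$. Since $g_\ast$ must fix the line $\IC\cdot x=\IC\cdot yy'$, Corollary \ref{inducedaction} gives $\rho_{Y,h_1}\rho_{Y,h_2}=1$, so $h_1h_2$ (and its conjugate $h_2h_1=h_1^{-1}(h_1h_2)h_1$) is symplectic. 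If $h_1h_2\neq\id$, then by Lemma \ref{HKfreeauto} a nontrivial symplectic automorphism cannot generate a free action, so some power $(h_1h_2)^j\neq\id$ has a fixed point; as $h_2h_1$ is conjugate to $h_1h_2$, the corresponding power of $g_\ast^2=(h_1h_2)\times(h_2h_1)$ then has a fixed point, contradicting the freeness of $G$. Hence $g_\ast^2=\id$ and $h_2=h_1^{-1}$; but then Lemma \ref{freeautos}\ref{freeautos2} shows that the involution $g_\ast$ is fixed-point free if and only if $h_2h_1=\id$ is, which is absurd. This rules out swaps.

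With swaps excluded, every $g\in G$ has the form $g_Y\times g_{Y'}$, and the symplectic characters assemble into homomorphisms $\chi,\chi'\colon G\to\IC^\ast$ describing the action on $y$ and $y'$. Invariance of $x=yy'$ gives $\chi'=\chi^{-1}$, so $g$ acts on a monomial $y^a(y')^b$ by $\chi(g)^{a-b}$. For the invariants $\Ho^*(\reg_{\widehat X})^G$ to be exactly $\IC[x]/x^{n+1}$ (one class in each degree $0,4,\dots,4n$, and none off the diagonal $a=b$) the image of $\chi$ must have order at least $n+1$; since $\ord G=n+1$ this forces $\chi$ to be an isomorphism onto $\mu_{n+1}$, so $G=\langle g\rangle$ is cyclic of order $n+1$ with $g=f\times f'$. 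As $\chi(g)=\rho_{Y,f}$ is then a primitive $(n+1)$-th root of unity, the symplectic order of $f$ is $n+1$; since it divides $\ord f$, which in turn divides $\ord G=n+1$, we get $\ord f=n+1$, so $f$ is purely non-symplectic of order $n+1$, and symmetrically for $f'$. Freeness of $G$ is inherited from the universal cover, completing \ref{narb}.

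For part \ref{nprime}, I would set $A=\{k:\ f^k\text{ has a fixed point on }Y\}$ and define $B$ analogously for $f'$, both inside $\IZ/(n+1)$. Here $A$ is the union of the stabiliser subgroups $\Stab_{\langle f\rangle}(p)$ over $p\in Y$, and when $n+1=p^\nu$ the subgroups of the cyclic $p$-group $\langle f\rangle$ form a chain, so $A$ (and likewise $B$) is itself a subgroup. By Lemma \ref{freeautos}\ref{freeautos1}, $g^k=f^k\times(f')^k$ has a fixed point precisely when $k\in A\cap B$, so freeness of $G$ is equivalent to $A\cap B=\{0\}$; as $A$ and $B$ lie in a chain of subgroups, their intersection is the smaller one, which is trivial only if $A=\{0\}$ or $B=\{0\}$. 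These two cases say exactly that $\langle f\rangle$ or $\langle f'\rangle$ acts freely, as claimed. The prime-power hypothesis enters only here, to guarantee that $A$ and $B$ are subgroups lying in a chain; for general $n+1$ the union of stabilisers need not be a subgroup and this last step breaks down.
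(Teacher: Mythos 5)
Your proof is correct and follows essentially the same route as the paper: rule out factor-swapping elements of $G$ by playing Lemmas \ref{productauto}, \ref{freeautos}, and \ref{HKfreeauto} against the $G$-invariance of $x=yy'$, then use the characters $\rho_Y,\rho_{Y'}$ together with $\ord G=n+1$ to identify $G=\langle f\times f'\rangle$ with $f,f'$ purely non-symplectic of order $n+1$, and finally exploit the chain structure of subgroups of a cyclic $p$-group for part \ref{nprime}. The only structural difference is that you run the swap-exclusion in the contrapositive direction (invariance $\Rightarrow$ symplectic $\Rightarrow$ some power has fixed points, contradicting freeness), whereas the paper argues freeness $\Rightarrow$ fixed-point free $\Rightarrow$ non-symplectic, contradicting invariance; the same lemmas play the same roles in both.
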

Before giving the proof of the theorem, let us restate, for convenience, the special case  of Lemma \ref{freeautos} for automorphisms of products with two factors.
\begin{lemma}\label{freeautostwo}
Let $X$ and $Y$ be manifolds, $g,f\in \Aut(X)$ and $h\in \Aut(Y)$.
\begin{enumerate}
 \item\label{freeautosdiagonal} $g\times h\in \Aut(X\times Y)$ is fixed point free if and only if at least one of $g$ and $h$ is fixed point free.
 \item\label{freeautospermute} Let $\phi:=(f\times g)\circ (1\,\,2)\in \Aut(X^2)$ be given by $(a,b)\mapsto (f(b), g(a))$. Then, $\phi$ is fixed point free if and only if $f\circ g$ and $g\circ f$ are fixed point free. 
\end{enumerate}
\end{lemma}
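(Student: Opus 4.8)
The plan is to prove both statements by directly computing the fixed-point locus of the relevant automorphism and reading off when it is empty. Each part is an immediate special case of Lemma \ref{freeautos} (with $k=2$), so one could alternatively just invoke that lemma after matching notation; but a self-contained unwinding of definitions is shorter and clearer here. Throughout I will write $\mathrm{Fix}(\cdot)$ for the fixed-point set of an automorphism.

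For part \ref{freeautosdiagonal}, I would observe that a point $(a,b)\in X\times Y$ is fixed by $g\times h$ precisely when $g(a)=a$ and $h(b)=b$ hold simultaneously. Hence $\mathrm{Fix}(g\times h)=\mathrm{Fix}(g)\times\mathrm{Fix}(h)$, which is nonempty if and only if both factors are nonempty. Negating this gives exactly the assertion: $g\times h$ is fixed point free if and only if at least one of $\mathrm{Fix}(g)$, $\mathrm{Fix}(h)$ is empty, i.e. at least one of $g,h$ is fixed point free. This step is purely mechanical.

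For part \ref{freeautospermute}, I would write out the fixed-point equations for $\phi\colon(a,b)\mapsto(f(b),g(a))$: a fixed point is a pair $(a,b)$ with $a=f(b)$ and $b=g(a)$. Substituting the second equation into the first yields $a=f(g(a))$, so $a\in\mathrm{Fix}(f\circ g)$; conversely, given $a\in\mathrm{Fix}(f\circ g)$ I would set $b:=g(a)$ and verify directly that $f(b)=f(g(a))=(f\circ g)(a)=a$ and $g(a)=b$, so that $(a,b)$ is fixed by $\phi$. This establishes a bijection between $\mathrm{Fix}(\phi)$ and $\mathrm{Fix}(f\circ g)$, whence $\phi$ is fixed point free if and only if $f\circ g$ is. The one point deserving a word of justification is the equivalence with $g\circ f$: since $g\circ(f\circ g)=(g\circ f)\circ g$ by associativity, the assignment $a\mapsto g(a)$ carries $\mathrm{Fix}(f\circ g)$ into $\mathrm{Fix}(g\circ f)$ (and $b\mapsto f(b)$ gives the inverse), so the two composites share the fixed-point-free property.

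I do not anticipate any genuine obstacle, as the entire lemma is a direct translation of the definition of a fixed point. The only thing to watch is the bookkeeping of the two equations in part \ref{freeautospermute}, so as not to interchange the roles of $f\circ g$ and $g\circ f$, together with the explicit remark that these two composites are conjugate via $g$ and hence are fixed point free simultaneously.
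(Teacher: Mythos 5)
Your proof is correct, and it is exactly the elementary verification one would expect: the paper itself offers no proof of this lemma, merely presenting it as the two-factor special case of Lemma \ref{freeautos} (which is likewise stated without proof as a simple criterion). Your computation of the fixed-point loci, including the conjugacy observation that $a\mapsto g(a)$ identifies $\mathrm{Fix}(f\circ g)$ with $\mathrm{Fix}(g\circ f)$, fills in precisely what the paper leaves to the reader, with no gaps.
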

\begin{proof}[Proof of Theorem \ref{4theorem}]
The fact that $X=(Y\times Y')/G$ with $Y,Y'\in \HK_{2n}$ and $G\cong \pi_1(X)$ is just a reformulation of Proposition \ref{k=4prop}. By the proof of this proposition, we see that $\Ho^*(\reg_{Y\times Y'})^G\cong \Ho^*(\reg_X)$ is generated by $x=yy'$ in degree 4. 

Let us assume for a contradiction that $G$ contains an element which permutes the factors $Y$ and $Y'$, in which case we have $Y=Y'$ by Lemma \ref{productauto}. In other words, there exists an $\phi=(f\times g)\circ (1\,\,2)\in G$ as in Lemma \ref{freeautostwo} \ref{freeautospermute}. Hence, $f\circ g$ is fixed point free. By  
Lemma \ref{HKfreeauto}, the composition $f\circ g$ is non-symplectic, i.e. $\rho_{f\circ g}\neq 1$. But $\rho_{f\circ g}=\rho_f\cdot \rho_g$ so that $\phi$ acts non-trivially on $x=yy'$ in contradiction to the $G$-invariance of $x$. 

Hence, every element of $G$ is of the form $g\times h$ as in Lemma \ref{freeautos} \ref{freeautosdiagonal}. We consider the group homomorphisms 
$\rho_Y\colon G\to \IC^*$ and $\rho_{Y'}\colon G\to \IC^*$. Their images are of the form $\mu_m$ and $\mu_{m'}$ respectively. We must have $m,m'\ge n+1$. Indeed, $y^m$ and $(y')^{m'}$ are $G$-invariant but, for $m\le n$ or $m'\le n$, not contained in the algebra generated by $x=yy'$. Since $|G|=n+1$, assertion \ref{narb} follows.

Let now $n+1=p^\nu$ be a prime power and $G=\langle f\rangle$. Let us assume for a contradiction that there exist $a,b\in \IN$ with $n+1=p^\nu \nmid a,b$ such that $f^a$ and $(f')^b$ have fixed points. Note that, in general, if an automorphism $g$ has fixed-points, also all of its powers have fixed points. Furthermore, for two elements $a,b\in \IZ/(p^\nu)$ we have $a\in \langle b\rangle$ or $b\in \langle a\rangle$. Hence, $(f\times f')^a$ or $(f\times f')^b$ has fixed points in contradiction to part \ref{narb}.
\end{proof}
This proves the implication (i)$\Longrightarrow$(iii) of Theorem \ref{main}.
Indeed, for $n+1$ a prime power, the above Theorem says that $Y/\langle f\rangle$ or $Y'/\langle f'\rangle$ is a strict Enriques variety; see Proposition \ref{classEnriques} \ref{Enriquesquotient}.   
 Note that Theorem 
\ref{4theorem} above does not hold for $n=2$; see Section \ref{n=2}. However, both conditions (i) and (iii) of Theorem \ref{main} hold true for $n=2$; see Theorem \ref{Enriquesexistence} and Corollary \ref{existencecor}. 
\begin{remark}
The proof of part \ref{nprime} of Theorem \ref{4theorem} does not work if $n+1$ is not a prime power. For example, if $n+1=6$, one could obtain a variety with $\IP^5[4]$-unit as a quotient $X=(Y\times Y')/\langle f\times g\rangle$ with $Y,Y'\in \HK_{10}$ such that $f$ and $g$ are purely non-symplectic of order 6, and $f$, $f^2$, $f^4$, $f^5$, $g$, $g^3$, $g^5$ are fixed point free but $f^3$, $g^2$, and $g^4$ are not. The author does not know whether hyperk\"ahler manifolds together with these kinds of automorphisms exist. 
\end{remark}

%%%%%%%%%%%%%%%%%%%%%%%%%%%%%%%%%%%%%%%%%%%%%%%%%%%%%%%%%%%%%%%%%%%%%%%%%%%%%%%%%%%%%%%%%

\section{Further remarks}\label{furtherremarks}

\subsection{Further constructions using strict Enriques varieties}\label{furtherconst1}
Given strict Enriques varieties of index $n+1$, there are, for $k\ge 6$, 
further constructions of varieties with $\IP^n[k]$-units besides the one 
of Section \ref{mainconstruction}.
Let $Y\in \HK_{2n}$ together with an $f\in \Aut(Y)$ purely symplectic of order $n+1$ 
such that $\langle f\rangle$ acts without fixed points, i.e\ the 
quotient $E=Y/\langle f\rangle$ is a strict Enriques variety. We 
consider the $(n+1)$-cycle $\sigma:=(1\,\,2 \cdots n+1)\in\sym_{n+1}$ and 
the subgroup $G(Y)\subset \Aut(Y^{n+1})$ given by
\[
  G(Y):=\bigl\{(f^{a_1}\times \dots\times f^{a_{n+1}})\circ \sigma^a\mid 
a_1+\dots +a_{n+1}\equiv a\mod n+1  \bigr\}\,.
\]
Every non-trivial automorphism of $G$ acts fixed point free on $Y^{n+1}$ 
by Lemma \ref{freeautos}. 
There is the surjective homomorphism
\[
 G(Y)\to \IZ/(n+1)\IZ \quad, \quad (f^{a_1}\times \dots\times f^{a_{n+1}})\circ \sigma^a\mapsto a\mod n+1
\]
and we denote the fibres of this homomorphism by $G_a(Y)$.

Now, consider further $Z_1,\dots, Z_k\in\HK_{2n}$ together 
with purely non-symplectic $g_i\in \Aut(Z_i)$ of order $n+1$ such that $\langle g_i\rangle$ acts freely and 
\begin{align}\label{rhoformula}\rho_{Z_i,g_i}=\rho_{Y,f}\quad \text{for all $i=1,\dots,k$.} 
\end{align}
The equality (\ref{rhoformula}) can be achieved as soon as we have any purely non-symplectic automorphisms $g_i\in \Aut(Z_i)$ of order $n+1$ by replacing the $g_i$ by appropriate powers $g_i^{\beta_i}$ with $\gcd(\beta_i,n+1)=1$.
% Note that 
% we do not require the $\langle f_i\rangle$ to act freely which means 
% that the quotients $[Z_i/\langle f_i\rangle]$ may be strict Enriques 
% stacks instead of varieties. 
We consider the subgroup $G(Y;Z_1,\dots, 
Z_k)\subset \Aut(Y^{n+1}\times Z_1\times \dots\times Z_k)$ given by
\[
  G(Y;Z_1,\dots, Z_k):=\bigl\{F\times g_1^{b_1}\times\dots\times g_k^{b_k}\mid F\in G_a(Y)\,,\, 
a+b_1+\dots +b_k\equiv 0\mod n+1  \bigr\}\,.
\]
\begin{prop}\label{secondconstruction}
  The quotient $X:=(Y^{n+1}\times Z_1\times \dots \times 
Z_k)/G(Y;Z_1,\dots, Z_k)$ is a smooth projective variety with 
$\IP^n[2(n+1+k)]$-unit.
\end{prop}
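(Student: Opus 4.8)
The plan is to verify conditions \ref{C1} and \ref{C2} of a $\IP^n[2(n+1+k)]$-unit for $X=W/G$, where I abbreviate $W:=Y^{n+1}\times Z_1\times\dots\times Z_k$ and $G:=G(Y;Z_1,\dots,Z_k)$, after first checking that $G$ acts freely so that $X$ is smooth projective and $\Ho^*(\reg_X)\cong\Ho^*(\reg_W)^G$ by Lemma \ref{EulerGlemma}. Freeness is quick: a non-trivial $\gamma=F\times g_1^{b_1}\times\dots\times g_k^{b_k}\in G$ either has $F\neq\id$ in $G(Y)$, in which case $F$ already acts freely on $Y^{n+1}$ (as recorded just after the definition of $G(Y)$, via Lemma \ref{freeautos}), or has $F=\id$ and some $b_i\not\equiv 0\bmod(n+1)$, in which case $g_i^{b_i}$ acts freely on $Z_i$ as a non-trivial power of a free cyclic action; either way $\gamma$ acts freely on the product $W$ by Lemma \ref{freeautos}\ref{freeautos1}.

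Next I would set up the cohomology. By the K\"unneth formula,
\[\Ho^*(\reg_W)\cong \IC[y_1,\dots,y_{n+1},z_1,\dots,z_k]/(y_\alpha^{n+1},z_i^{n+1})\,,\]
with all generators in degree $2$, where $y_\alpha$ is pulled back from the $\alpha$-th copy of $Y$ and $z_i$ from $Z_i$. Since $\langle f\rangle$ acts freely, $f$ is purely non-symplectic by Lemma \ref{HKfreeauto}, so $\zeta:=\rho_{Y,f}$ is a primitive $(n+1)$-th root of unity; by the normalisation $\rho_{Z_i,g_i}=\rho_{Y,f}$ the same $\zeta$ governs the $Z_i$. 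Corollary \ref{inducedaction} then describes the $G$-action: $F=(f^{a_1}\times\dots\times f^{a_{n+1}})\circ\sigma^a$ permutes the $y_\alpha$ by $\sigma^a$ and scales them by powers of $\zeta$ whose exponents sum to $a$, while $g_i^{b_i}$ acts by $z_i\mapsto\zeta^{b_i}z_i$. The candidate generator is $x:=y_1\cdots y_{n+1}\,z_1\cdots z_k$, of degree $2(n+1+k)$. As $y_1\cdots y_{n+1}$ is symmetric, the permutation part of $\gamma$ fixes it, and the total scaling of $x$ is $\zeta^{a+b_1+\dots+b_k}=\zeta^0=1$ precisely because of the defining congruence $a+b_1+\dots+b_k\equiv0\bmod(n+1)$; hence $x\in\Ho^*(\reg_W)^G$. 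Moreover $x^j=\prod_\alpha y_\alpha^j\prod_i z_i^j$ is non-zero for $0\le j\le n$ and vanishes for $j=n+1$.

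Condition \ref{C1} then follows because $x^n$ generates the one-dimensional top cohomology $\Ho^{2n(n+1+k)}(\reg_W)$ and is $G$-invariant; by Serre duality $G$ acts trivially on $\Ho^0(\omega_W)$, i.e.\ preserves a holomorphic volume form, so $\omega_X$ is trivial (equivalently, $\gamma$ scales the volume form by $\bar\zeta^{\,n(a+\sum b_i)}=1$, with no sign from permuting the even-degree factors). For \ref{C2} the key point is a dimension count that avoids any explicit invariant-theoretic computation: the group has order $|G|=(n+1)^{n+k}$ while $\chi(\reg_W)=\chi(\reg_Y)^{n+1}\prod_i\chi(\reg_{Z_i})=(n+1)^{n+1+k}$, so Lemma \ref{EulerGlemma} gives $\chi(\reg_X)=(n+1)^{n+1+k}/(n+1)^{n+k}=n+1$. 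As $\Ho^*(\reg_W)$, hence $\Ho^*(\reg_X)=\Ho^*(\reg_W)^G$, is concentrated in even degrees, $\chi(\reg_X)=\dim_\IC\Ho^*(\reg_X)$, so $\dim_\IC\Ho^*(\reg_X)=n+1$. The $n+1$ invariants $1,x,x^2,\dots,x^n$ lie in distinct degrees and are therefore linearly independent, so they span $\Ho^*(\reg_X)$; together with $x^{n+1}=0$ this yields $\Ho^*(\reg_X)\cong\IC[x]/x^{n+1}$ with $\deg x=2(n+1+k)$, which is \ref{C2}.

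The hard part will be the bookkeeping that makes the clean $\chi$-argument legitimate: verifying that all the $\zeta$-scalings conspire, through the single congruence defining $G$, to fix $x$ and the volume form simultaneously, and pinning down $|G|=(n+1)^{n+k}$ exactly. Once these are in place the Euler-characteristic count does everything and no Molien-type trace computation over the cycle types of $\sigma^a$ is needed. I would confirm the order by stratifying over the permutation degree $a\in\IZ/(n+1)$: for each $a$ there are $(n+1)^n$ choices of $F\in G_a(Y)$ and $(n+1)^{k-1}$ choices of $(b_1,\dots,b_k)$ with $\sum b_i\equiv-a$, giving $(n+1)\cdot(n+1)^n\cdot(n+1)^{k-1}=(n+1)^{n+k}$.
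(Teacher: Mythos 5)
Your proof is correct and takes essentially the same route as the paper's: freeness of the $G$-action via Lemma \ref{freeautos}, $G$-invariance of $x=y_1\cdots y_{n+1}z_1\cdots z_k$ from the defining congruence together with (\ref{rhoformula}), the Euler-characteristic count $\chi(\reg_X)=(n+1)^{n+1+k}/(n+1)^{n+k}=n+1$ via Lemma \ref{EulerGlemma} to force the inclusion $\IC[x]/x^{n+1}\subset \Ho^*(\reg_X)$ to be an equality, and triviality of $\omega_X$ from the trivial $G$-action on the top cohomology $\langle x^n\rangle\cong \Ho^0(\omega_{X'})$. You merely fill in details the paper leaves to the reader, namely the case analysis for fixed-point freeness and the explicit computation $\abs{G}=(n+1)^{n+k}$.
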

\begin{proof}
One can check using  Lemma \ref{freeautos} that the group $G:=G(Y;Z_1,\dots,Z_k)$ acts freely on $X':=Y^{n+1}\times Z_1\times\dots \times 
Z_k$.  
Hence, $X$ is indeed smooth. 

By the defining property of the elements of $G(Y; Z_1,\dots, Z_k)$ together with (\ref{rhoformula}), we see that $x:=y_1y_2\cdots y_{n+1}z_1z_2\cdots z_k$ is $G$-invariant. Hence, as $x^i\neq 0$ for $0\le i\le n$, we get the inclusion 
\begin{align}\label{inclusioninvariants}\IC[x]/x^{n+1}\subset \Ho^*(\reg_{X'})^G\cong \Ho^*(\reg_X)\quad,\quad \deg x= 2(n+1+k)\,.\end{align} 
Also, $\ord G(Z_1,\dots,Z_k)=(n+1)^{n+1+k-1}$. By Lemma \ref{EulerGlemma}, we get $\chi(\reg_{X})=n+1$ so that the inclusion (\ref{inclusioninvariants}) must be an equality which is \ref{C2}. 
Finally, the canonical bundle $\omega_X$ is trivial since $G$ acts trivially on $\langle x^n\rangle=\Ho^{\dim X'}(\reg_{X'})\cong \Ho^0(\omega_{X'})$. 
\end{proof}
\begin{remark}
For $n\ge 2$, the group $G(Y;Z_1,\dots,Z_k)$ is not abelian. Since $X'\to X$ is the universal cover, we see that, for $k\ge 4$, there are examples of varieties with $\IP^n[k]$-units which have a non-abelian fundamental group.  
\end{remark}
\begin{remark}
Again, for one $i\in\{1,\dots k\}$ we may drop the assumption that $\langle g_i\rangle$ acts freely; compare Remark \ref{stillfree}. 
\end{remark}

\begin{remark}
 One can further generalise the above construction as follows. Consider hyperk\"ahler manifolds  $Y_1,\dots, Y_m,Z_1,\dots, Z_k\in \HK_{2n}$ together with $f_i\in \Aut(Y_i)$ and $g_j\in \Aut(Z_j)$ purely non-symplectic of order $n+1$ such that the generated cyclic groups act freely. Set $X':=Y_1^{n+1}\times \dots \times Y_m^{n+1}\times Z_1\times\dots Z_k$ and consider $G:=G(Y_1,\dots, Y_m;Z_1,\dots,Z_k)\subset \Aut(X')$ given by
\[
G=\bigl\{F_1\times \dots \times F_m\times g_1^{b_1}\times\dots\times g_k^{b_k}\mid F_i\in G_{a_i}(Y)\,,\, 
a_1+\dots +a_m+b_1+\dots +b_k\equiv 0\mod n+1  \bigr\}\,.
\]
Then, $X:=X'/G$ has a $\IP^n[2(m(n+1)+k)]$-unit.
\end{remark}

\begin{remark}
 In the case $n=1$, one may replace the K3 surfaces $Y_i$ and $Z_j$ by strict Calabi--Yau varieties of arbitrary dimensions. Still, the quotient $X$ will be a strict Calabi--Yau variety.
\end{remark}

\subsection{A construction not involving strict Enriques varieties}\label{n=2}
As mentioned in Section \ref{k=4Sec}, there is a variety $X$ with $\IP^2[4]$-unit whose universal cover 
$\widehat X$ is not a product of two hyperk\"ahler varieties of dimension $4$. This shows that the assumption $n\ge 3$ in Proposition \ref{k=4prop} is really necessary. 

For the construction, let $Z$ be a strict Calabi--Yau variety of dimension $\dim Z=e$ together with a fixed point free involution $\iota \in \Aut(Z)$. Necessarily, $\rho_{Z,\iota}=-1$; see Lemma \ref{HKfreeauto}. 
Furthermore, let $Y\in \HK_4$ together with a purely non-symplectic $f\in \Aut(Y)$ of order $4$ with $\rho_{Y,f}=\sqrt{-1}$.
Note that $g$ must have fixed points on $Y$.
Such pairs $(Y,f)$ exist. Take a K3 surface $S$ (an abelian surface $A$) together with a purely non-symplectic automorphism of order $4$ and $Y=S^{[2]}$ ($Y=K_2A$) together with the induced automorphism.

Now, consider $G(Z)\subset \Aut(Z^2)$ as in the previous section. It is a cyclic group of order $4$ with generator $g=(\iota\times \id)\circ (1\,\,2)$. Set $X'=Y\times Z^2$ and $G:=\langle f\times g\rangle \subset \Aut(X')$. The group $G$ acts freely, since $G(Z)$ does; see Lemma \ref{freeautostwo}. One can check that
$x=yz_1 + \sqrt{-1}\cdot y z_2\in \Ho^{2+e}(\reg_{X'})$ is $G$-invariant. By the same argument as in the proof of Proposition \ref{secondconstruction}, we conclude that $X$ has a $\IP^2[2+e]$-unit. In particular, in the case that $Z\in \Kthree$, we get a variety with $\IP^2[4]$-unit.

\subsection{Possible construction for $k=6$}\label{furtherconst3}

In contrast to the case $k=4$ and $n+1$ a prime power (see Theorem \ref{main}), there might be a variety with $\IP^n[6]$-unit even if there is no Enriques variety of index $n+1$ but one of index $2n+1$. 
Of course, since there are at the moment only known examples of strict Enriques varieties of index $2$, $3$, and $4$, this is only hypothetical. 

Indeed, let $Y\in \HK_{4n}$ together with subgroup $\langle f\rangle\subset \Aut (Y)$ acting freely, where $f$ is purely non-symplectic of order $2n+1$, and let $Y'\in \HK_{2n}$ together with $f'\in \Aut(Y')$ non-symplectic of order $n+1$ with $\rho_{Y, f}=\rho_{Y',f'}^{-1}$. Necessarily, $f'$ has fixed points; see Lemma \ref{HKfreeauto}. Then $G=\langle f\times f'^2\rangle$ acts freely on $Y$ and $x=y^2\cdot y'$ is $G$-invariant. It follows that $X=(Y\times Y')/G$ has a $\IP^n[6]$-unit.

\subsection{Stacks with $\IP^n[k]$-units}\label{Pnkstacks}
Let $\cX$ be a smooth projective stack. In complete analogy to the case of varieties, we say that $\cX$ \textit{has a $\IP^n[k]$-unit} if $\reg_\cX\in \D(\cX)$ is a $\IP^n[k]$-object. Again, this means that:
\begin{enumerate}[label=(C\arabic*')]
 \item\label{C1'} The canonical line bundle $\omega_\cX$ is trivial,
 \item\label{C2'} There is an isomorphism of $\IC$-algebras $\Ho^*(\reg_{\cX})\cong \IC[x]/x^{n+1}$ with $\deg x = k$.
\end{enumerate}
In contrast to the case of varieties, it is very easy to construct stacks with $\IP^n[k]$-units. 

Let $Z\in \CY_k$ with $k$ even. Then, the symmetric group $\sym_n$ acts on $Z^n$ by permutation of the factors and we call the associated quotient stack $\cX=[Z^n/\sym_n]$ the \textit{symmetric quotient stack}. Then, as $k=\dim Z$ is even, the canonical bundle of $\cX$ is trivial; see \cite[Sect.\ 5.4]{KSosequiequi}. Condition \ref{C2'} follows by the K\"unneth formula  
\[
 \Ho^*(\reg_\cX)\cong \Ho^*(\reg_{Z^n})^{\sym_n}\cong (\Ho^*(\reg_Z)^{\otimes n})^{\sym_n}\cong S^n(\Ho^*(\reg_Z))\,.
\]

There are also plenty of other examples of stacks with $\IP^n[k]$-units. Let $S\in \Kthree$ with $\iota\in S$ a non-symplectic involution and $\iota^{[n]}\in \Aut(S^{[n]})$ the induced automorphism on the Hilbert scheme of $n$ points on $S$. Then, for $n$ even, the associated quotient stack $[X^{[n]}/\iota^{[n]}]$ has a $\IP^{n/2}[4]$-unit. In contrast, if $n$ is odd and $\iota$ fixed point free, the quotient $X^{[n]}/\iota^{[n]}$ is an OS Enriques variety; see \cite[Prop.\ 4.1]{OSEnriques}.

Also, all the constructions of the earlier sections lead to stacks with $\IP^n[k]$-units if we replace the strict Enriques varieties by strict Enriques stacks.
\subsection{Derived invariance of strict Enriques varieties}\label{derivedsection}
In \cite{Abuunit}, Abuaf conjectured that the homological unit is a derived invariant of smooth projective varieties. This means that for two varieties $X_1$, $X_2$ with $\D(X_1)\cong \D(X_2)$ we should have an isomorphisms of $\IC$-algebras $\Ho^*(\reg_{X_1})\cong \Ho^*(\reg_{X_2})$. 

In regard to this conjecture, one would like to proof that the class of varieties with $\IP^n[k]$-units is stable under derived equivalence. This is true for $k=2$: In \cite{HuyNW},  it is shown that the class of compact hyperk\"ahler varieties is stable under derived equivalence. However, the methods of the proof do not seem to generalise to higher $k$. At least, we can use the result of \cite{HuyNW} in order to show that the class of strict Enrqiues varieties is derived stable.    

\begin{lemma}
 Let $E_1$ be a strict Enriques variety of index $n+1$ and $E_2$ a Fourier--Mukai partner of $E_2$, i.e.\ $E_2$ is a smooth projective variety with $\D(E_1)\cong \D(E_2)$. Then $E_2$ is also a strict Enriques variety of the same index $n+1$. 
\end{lemma}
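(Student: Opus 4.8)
The plan is to verify that $E_2$ satisfies the hypotheses of Proposition \ref{classEnriques}\ref{prop2}, i.e.\ that it is a smooth projective variety satisfying \ref{S2} with $m=n+1$, satisfying \ref{S3}, and of dimension $\dim E_2=2n$; that proposition then yields at once that $E_2$ is a strict Enriques variety of index $n+1$, so in particular \ref{S1} need not be checked directly. Two of the three conditions are cheap. Dimension is a derived invariant, whence $\dim E_2=\dim E_1=2n$, the last equality by Proposition \ref{classEnriques}\ref{prop1}. For the order of the canonical bundle, recall that the Serre functor $S_{E_i}=(-)\otimes\omega_{E_i}[\dim E_i]$ is intrinsic to $\D(E_i)$ and is carried to $S_{E_2}$ by any equivalence; hence the least power of $S_{E_i}$ isomorphic to a shift, which is exactly $\ord(\omega_{E_i})$, agrees for $E_1$ and $E_2$. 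Thus $\omega_{E_2}$ is of finite order $n+1$, and since $n+1\ge 2$ it is non-trivial, giving \ref{S2}.

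The substance is condition \ref{S3}, namely that the canonical cover $\widetilde{E_2}$ is hyperk\"ahler. Having established $\ord(\omega_{E_2})=n+1$, the canonical cover $\pi_2\colon\widetilde{E_2}\to E_2$ is a well-defined connected variety with trivial canonical bundle, an \'etale $\mu_{n+1}$-cover as in Section \ref{notations} \ref{cc}. The key step is that the given equivalence lifts to an equivalence $\D(\widetilde{E_1})\cong\D(\widetilde{E_2})$ of the canonical covers. The mechanism is standard: writing the equivalence as a Fourier--Mukai transform with kernel $P\in\D(E_1\times E_2)$, compatibility with Serre functors forces $P\otimes\pr_1^*\omega_{E_1}\cong P\otimes\pr_2^*\omega_{E_2}$ (the shifts cancel because $\dim E_1=\dim E_2$), and iterating gives $P\otimes\pr_1^*\omega_{E_1}^{i}\cong P\otimes\pr_2^*\omega_{E_2}^{i}$ for all $i$. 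This is precisely the datum needed to promote $P$ to a kernel on $\widetilde{E_1}\times\widetilde{E_2}$ compatible with the two cyclic cover algebras $\oplus_i\omega_{E_j}^{-i}$ and to verify that the resulting transform is again an equivalence. Once $\D(\widetilde{E_1})\cong\D(\widetilde{E_2})$ is available, I invoke the derived stability of the hyperk\"ahler class from \cite{HuyNW}: since $\widetilde{E_1}\in\HK_{2n}$, also $\widetilde{E_2}$ is hyperk\"ahler, which is exactly \ref{S3}.

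Collecting \ref{S2}, \ref{S3}, and $\dim E_2=2n$, Proposition \ref{classEnriques}\ref{prop2} concludes that $E_2$ is a strict Enriques variety of index $n+1$. The main obstacle is the lifting step $\D(\widetilde{E_1})\cong\D(\widetilde{E_2})$: the Serre-compatibility of the kernel is immediate, but turning it into a genuine equivalence of the covers requires checking that the module structure over the cover algebra is respected. The freeness of the $\mu_{n+1}$-actions (automatic for strict Enriques varieties) keeps both canonical covers honest connected smooth varieties; the delicate point I expect to spend the most effort on is ensuring no Brauer twist intervenes on the lifted side, so that $\widetilde{E_2}$ rather than a twisted or gerby partner carries the equivalence, at which point \cite{HuyNW} applies verbatim.
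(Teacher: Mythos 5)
Your proposal is correct and follows essentially the same route as the paper: reduce via Proposition \ref{classEnriques}\ref{prop2} to checking $\dim E_2=2n$, $\ord\omega_{E_2}=n+1$ (both derived invariants, cf.\ \cite[Prop.\ 4.1]{Huy}), and hyperk\"ahlerness of $\widetilde{E_2}$, the latter by lifting the equivalence to the canonical covers and invoking \cite{HuyNW}. The only difference is that you sketch the kernel-lifting mechanism yourself, whereas the paper simply cites Bridgeland--Maciocia \cite{BM} for this step.
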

\begin{proof}
By Proposition \ref{classEnriques}, condition \ref{S1} of a strict Enriques variety of index $n+1$ can be replaced by the condition $\dim E_1=2n$. The dimension of a variety and the order of its canonical bundle are derived invariants; see e.g.\ \cite[Prop.\ 4.1]{Huy}. Hence, also $\dim E_2=2n$ and $\ord \omega_{E_2}=n+1$. 

It remains to show that the canonical cover $\widetilde{E_2}$ is again hyperk\"ahler. 
Indeed, the equivalence $\D(E_1)\cong \D(E_2)$ lifts to an equivalence of the canonical covers $\D(\widetilde E_1)\cong \D(\widetilde{E_2})$ and the class of hyperk\"ahler varieties is stable under derived equivalences; see \cite{BM} and \cite{HuyNW}, respectively. 
\end{proof}

The exactly same proof shows that the class of OS Enriques varieties with fixed dimension and index is derived stable.

\subsection{Autoequivalences of varieties with $\IP^n[k]$-unit}\label{autosection}
As mentioned in Remark \ref{inducedautoremark}, every $\IP^n[k]$-object $E\in \D(X)$ induces an autoequivalence, called $\IP$-twist, $P_E\in \Aut(\D(X))$. This can be seen as a special case of \cite[Thm.\ 3]{Add} or as a straight-forward generalisation of \cite[Prop.\ 2.6]{HT}. 
We will describe the twist only in the special case $E=\reg_X$. In particular, we assume that $X$ has a $\IP^n[k]$-unit. Then, by Remark 
\ref{regspecial}, every line bundle $L\in \Pic X$ is a $\IP^n[k]$-object too. However, it suffices to understand the twist $P_X:=P_{\reg_X}$ as we have $P_L=M_{L}P_X M_L^{-1}$ where $M_L=(\_)\otimes L$ is the autoequivalence given by tensor product with $L$; see \cite[Lem.\ 2.4]{Kru3}.

The $\IP$-twist along $\reg_X$ is constructed as the Fourier--Mukai transform $P_X:=\FM_{\cQ}\colon \D(X)\to \D(X)$ where 
\[
\mathcal Q=\cone\bigl(\cone(\reg_{X\times X}\xrightarrow{x\boxtimes \id-\id\boxtimes x} \reg_{X\times X})\xrightarrow{r}\reg_{\Delta} \bigr)\in \D(X\times X)\,.
\]
Here, $x$ is a generator of $\Ho^k(\reg_X)\cong \Hom(\reg_X[-k], \reg_X)$ and $r\colon \reg_{X\times X}\to\reg_\Delta$ is the restriction of sections to the diagonal. The double cone makes sense, since $r\circ (x\boxtimes \id-\id \boxtimes x)=0$; see \cite[Sect.\ 2]{HT} for details. 
On the level of objects $F\in \D(X)$, the twist $P_X$ is given by 
\begin{align}\label{Pdescription}
P_X(F) =\cone\Bigl(\cone\bigl(\Ho^*(F)\otimes \reg_{X}[-k]\to \Ho^*(F)\otimes \reg_{X}\bigr)\to F \Bigr)\,.
\end{align}
We summarise the main properties of the twist $P_X$ in the following 
\begin{prop}\label{twistprop}
 The $\IP$-twist $P_X\colon \D(X)\to \D(X)$ is an autoequivalence with the properties
 \begin{enumerate}
  \item $P_X(\reg_X)=\reg_X[-k(n+1)+2]$,
  \item $P_X(F)=F$ for $F\in \reg_X^\perp=\{F\in \D(X)\mid \Hom^*(\reg_X, F)=0\}$,
\item\label{phiitem} Let $\Phi\in \Aut(\D(X))$ with $\Phi(\reg_X)=\reg_X[m]$ for some $m\in \IZ$. Then the autoequivalences $\Phi$ and $P_X$ commute.
  %   \item\label{twistprop3} Let $Z\subsetneq X$ be a closed subvariety with open complement $U$, and set $A:=P_X(\reg_Z)$. Then, the cohomology of $A$ is concentrated in degrees $\ge -1$ and $\cH^{-1}(A)_{|U}\cong \reg_U$. 
 \end{enumerate}
\end{prop}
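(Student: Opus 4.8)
The plan is to read off (i) and (ii) directly from the object-level description (\ref{Pdescription}), to invoke the general theory of $\IP$-functors for the fact that $P_X$ is an autoequivalence, and to deduce (iii) from the equivariance of $\IP$-twists under autoequivalences. For (ii), let $F\in\reg_X^\perp$, so that by definition $\Hom^*(\reg_X,F)=\Ho^*(F)=0$. Then both terms $\Ho^*(F)\otimes\reg_X[-k]$ and $\Ho^*(F)\otimes\reg_X$ in (\ref{Pdescription}) vanish, the inner cone is the zero object, and $P_X(F)=\cone(0\to F)\cong F$; this is immediate. For (i), I would substitute $F=\reg_X$, so that $\Ho^*(F)\cong\IC[x]/x^{n+1}$, into (\ref{Pdescription}). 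The inner map $\Ho^*(\reg_X)\otimes\reg_X[-k]\to\Ho^*(\reg_X)\otimes\reg_X$ is $(\cdot x)\otimes\id-\id\otimes x$, where $\cdot x$ is multiplication by the generator on $\IC[x]/x^{n+1}$ and the second $x$ is the morphism $\reg_X[-k]\to\reg_X$. This is exactly the computation of \cite[Sect.\ 2]{HT} for honest $\IP^n$-objects, with the degree $2$ replaced throughout by $k$: the two nested cones telescope to a single shift of $\reg_X$, and the shift works out to $-k(n+1)+2$. As a sanity check in the spherical case $n=1$, the $\IP$-twist is the square of the spherical twist, giving $\reg_X[2(1-\dim X)]=\reg_X[2-2k]=\reg_X[-k\cdot 2+2]$, in agreement.

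That $P_X$ is an autoequivalence is a special case of the general theory: by Remark \ref{inducedautoremark}, $P_X$ is the $\IP$-twist associated to the $\IP$-functor $\D(\pt)\to\D(X)$, $V\mapsto V\otimes\reg_X$, with cotwist $[-k]$, so \cite[Thm.\ 3]{Add} applies. For (iii), the key input is the conjugation formula for $\IP$-twists: for any autoequivalence $\Psi\in\Aut(\D(X))$ and any $\IP^n[k]$-object $E$ one has $\Psi\circ P_E\circ\Psi^{-1}\cong P_{\Psi(E)}$, the $\IP$-object analogue of the corresponding statement for spherical twists in \cite{ST}. I would apply it with $\Psi=\Phi$ and $E=\reg_X$, giving $\Phi P_X\Phi^{-1}\cong P_{\Phi(\reg_X)}=P_{\reg_X[m]}$. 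Finally, the $\IP$-twist is insensitive to shifting its defining object: the kernel of $P_{\reg_X[m]}$ has middle term $\reg_X[m]^\vee\boxtimes\reg_X[m]\cong\reg_X[-m]\boxtimes\reg_X[m]\cong\reg_{X\times X}$, the same as for $P_{\reg_X}$, and the maps $x\boxtimes\id-\id\boxtimes x$ and $r$ are unchanged, so $P_{\reg_X[m]}\cong P_{\reg_X}=P_X$. Combining, $\Phi P_X\Phi^{-1}\cong P_X$, i.e.\ $\Phi$ and $P_X$ commute.

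The hard part will be the conjugation formula itself, if one does not simply cite it. Proving it amounts to checking that the double-cone construction of (\ref{Pdescription}) is natural with respect to the equivalence $\Phi$. The only subtle point is the internal map built from the generator $x\in\Hom^k(\reg_X,\reg_X)$: since $\Phi$ is an equivalence it induces an isomorphism of graded algebras $\Hom^*(\reg_X,\reg_X)\cong\Hom^*(\Phi\reg_X,\Phi\reg_X)$ carrying $x$ to a generator of the target, so the data defining $P_{\Phi(\reg_X)}$ is precisely the $\Phi$-image of the data defining $P_{\reg_X}$; with this observation the two nested cones match up functorially. An alternative that sidesteps the naturality bookkeeping is to work kernel-side via Orlov's representability theorem, writing $\Phi=\FM_{\cP}$ and convolving $\cP\ast\cQ\ast\cP^{-1}$, at the cost of a kernel computation showing $\Phi_*\cQ\cong\cQ_{\Phi(\reg_X)}$.
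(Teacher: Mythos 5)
Your proposal is correct and takes essentially the same route as the paper, whose proof consists of exactly the citations you invoke: \cite[Sect.\ 2]{HT} and \cite[Sect.\ 3.4\&3.5]{Add} for the autoequivalence property and parts (i)--(ii), and \cite[Lem.\ 2.4]{Kru3} for part (iii), the latter being precisely the conjugation formula $\Phi\circ P_E\circ \Phi^{-1}\cong P_{\Phi(E)}$ that you use, combined with the shift-invariance $P_{\reg_X[m]}\cong P_{\reg_X}$. Your explicit verifications (reading (ii) off (\ref{Pdescription}), the telescoping computation for (i), and the kernel comparison for shift-invariance) are correct unpackings of those references.
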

\begin{proof}
 For the first two properties, see \cite[Sect.\ 2]{HT} or \cite[Sect.\ 3.4\&3.5]{Add}. Part \ref{phiitem} follows from \cite[Lem.\ 2.4]{Kru3}.
\end{proof}

\begin{lemma}\label{complement}
Let $X$ be a variety with $\IP^n[k]$-unit with $k\ge 2$ (not an elliptic curve). Let $Z_1, Z_2\subset X$ be two disjoint closed subvarieties and set \[F:=R\sHom \bigl(P_X(\reg_{Z_1}), P_X(\reg_{Z_2})\bigr)\in \D(X)\,.\] Then $\Hom^*(\reg_X,F)=\Ho^*(F)=0$ and $F\neq 0$.
In particular, the orthogonal complement of $\reg_X$ is non-trivial. 
\end{lemma}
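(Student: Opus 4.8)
The first assertion is formal, so I would dispatch it quickly. Writing $A:=P_X(\reg_{Z_1})$ and $B:=P_X(\reg_{Z_2})$, I would use that $\reg_X$ is the tensor unit to identify $\Ho^*(F)=\Hom^*(\reg_X,R\sHom(A,B))=\Hom^*(A,B)$, and then that $P_X$ is an autoequivalence to get $\Hom^*(A,B)=\Hom^*(\reg_{Z_1},\reg_{Z_2})$. Since $R\sHom(\reg_{Z_1},\reg_{Z_2})$ is supported on $\supp\reg_{Z_1}\cap\supp\reg_{Z_2}=Z_1\cap Z_2=\emptyset$, it vanishes, and taking global sections gives $\Hom^*(\reg_{Z_1},\reg_{Z_2})=0$. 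This yields $\Ho^*(F)=\Hom^*(\reg_X,F)=0$, i.e. $F\in\reg_X^\perp$.

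The real content is the non-vanishing $F\neq0$, and my plan is to exhibit a point at which the stalk $F_p$ is non-zero. I would fix $p\in Z_2$ (so $p\notin Z_1$ by disjointness) and set $R:=\reg_{X,p}$, a regular local ring. The decisive observation is that $x\in\Ho^k(\reg_X)=\Hom_{\D(X)}(\reg_X,\reg_X[k])$ localises to zero in $\D(R)$, since its germ lies in $\Ext^k_R(R,R)=0$ for $k\geq1$. Feeding this into the description (\ref{Pdescription}) of $P_X$, whose inner map is $\mu_x\otimes\id-\id\otimes x$ (from the explicit kernel $\cQ$), the second summand dies at the stalk, where $\mu_x$ denotes multiplication by $x$ on $\Ho^*(G)$. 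Hence for every $G$ one gets $(P_XG)_p\cong\cone\bigl(\cone(\mu_x)\otimes_\IC R\to G_p\bigr)$, and as $\cone(\mu_x)$ is a bounded complex of finite-dimensional vector spaces it is formal, so $\cone(\mu_x)\otimes_\IC R$ is a sum of shifted free $R$-modules.

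I would then compute the two relevant stalks. For $G=\reg_{Z_1}$ one has $G_p=0$, so $A_p\cong(\cone(\mu_x)\otimes_\IC R)[1]$ is perfect with free cohomology; its crucial feature is $\mathcal H^{-1}(A_p)=\Ho^0(\reg_{Z_1})\otimes R\neq0$, arising from the cokernel $\coker\bigl(\mu_x\colon\Ho^{-k}(\reg_{Z_1})\to\Ho^0(\reg_{Z_1})\bigr)=\Ho^0(\reg_{Z_1})$ (here $k\geq2$ is exactly what makes the companion kernel term $\Ho^{1-k}(\reg_{Z_1})$ vanish). Thus $A_p^\vee$ has a free summand $R^{\oplus w}[-1]$ with $w=\dim\Ho^0(\reg_{Z_1})\geq1$. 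For $G=\reg_{Z_2}$ the stalk $G_p=\reg_{Z_2,p}$ is a non-zero torsion module ($Z_2$ being a proper subvariety), and I would rule out $B_p=0$: otherwise the defining triangle forces an isomorphism $\cone(\mu_x)\otimes_\IC R\cong\reg_{Z_2,p}$ in $\D(R)$, and comparing $\mathcal H^0$ would identify the free module $\Ho^0(\reg_{Z_2})\otimes R$ of positive rank with a torsion module, which is absurd. Finally, since $X$ is smooth, $A$ is perfect and $F=A^\vee\otimes^{\mathbf L}B$, so $F_p=A_p^\vee\otimes^{\mathbf L}_R B_p$ contains $B_p^{\oplus w}[-1]$ as a direct summand; as $B_p\neq0$ and $w\geq1$, this is non-zero, whence $F\neq0$ and $\reg_X^\perp\neq0$.

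The step I expect to be the main obstacle is precisely this non-vanishing. Because $P_X$ is a genuine Fourier--Mukai autoequivalence rather than a tensoring operation, it does not commute with $R\sHom$, and for even $k$ the generic rank of each $P_X(\reg_{Z_i})$ vanishes, so neither a support argument nor a rank argument works on its own. The point that unlocks the computation is the vanishing $x_p=0$, which trivialises the kernel locally and reduces $P_X$ to the elementary cone $\cone(\mu_x)\otimes_\IC R\to G_p$; the non-vanishing then comes down to the robust fact that a free $R$-module of positive rank is neither acyclic nor isomorphic to a non-zero torsion module.
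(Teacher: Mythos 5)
Your proof is correct, and for the essential point --- the non-vanishing of $F$ --- it takes a genuinely different route from the paper's. The vanishing half is the same in both (full faithfulness of $P_X$ plus the adjunction between $R\sHom$ and $\Hom^*(\reg_X,-)$); there is essentially only one proof of that part. For $F\neq 0$, the paper restricts to the open complement $V=X\setminus(Z_1\cup Z_2)$, where both structure sheaves die: by (\ref{Pdescription}) the transforms $P_X(\reg_{Z_i})|_V$ then have free extreme cohomology sheaves, $\cH^{-1}\cong\reg_V$ at the bottom and $\cH^{\alpha_i+k-2}\cong\reg_V\otimes\Ho^{\alpha_i}(\reg_{Z_i})$ at the top, and the local-to-global spectral sequence computing the sheafified Ext's of $F|_V$ has a non-zero corner term $E_2^{0,-\alpha_1-k+1}$ that survives for position reasons, giving $\cH^{-\alpha_1-k+1}(F)|_V\neq 0$. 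You localize instead at a point $p\in Z_2$, where $\reg_{Z_2}$ does \emph{not} vanish. Your key input --- that $x$ dies in $\Ext^k_R(R,R)=0$, so the twist collapses at the stalk to the elementary cone $\cone\bigl(\cone(\mu_x)\otimes_\IC R\to G_p\bigr)$ --- is a sharpened, stalk-level version of the fact implicitly underlying the paper's computation (namely that $\id\otimes x$ induces the zero map on cohomology sheaves), and it buys you a complete splitting of $A_p$ into shifted free modules. The cost of working at a point of $Z_2$ is the extra free-versus-torsion step needed to rule out $B_p=0$, which the paper never faces since on $V$ both transforms are visibly non-zero; the gain is that you avoid spectral sequences altogether, the conclusion following from the elementary decomposition $F_p\cong A_p^\vee\otimes^{\mathbf{L}}_R B_p$ and the fact that a non-zero direct summand forces $F_p\neq 0$. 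The two arguments even detect $F$ in complementary regions: yours shows $F_p\neq 0$ at every point of $Z_2$, the paper's that $F$ is non-zero on the complement $V$. Both proofs use the hypothesis $k\ge 2$ in the same way, to kill $\Ho^{-k}(\reg_{Z_i})$ and $\Ho^{1-k}(\reg_{Z_i})$ and thereby keep the free degree-$(-1)$ piece alive.
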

\begin{proof}
Clearly, $\Hom^*(\reg_{Z_1}, \reg_{Z_2})=0$. Using the fact that the equivalence $P_X$ is, in particular, fully faithful and standard compatibilities between derived functors, we get
\[
 0=\Hom^*\bigl(P_X(\reg_{Z_1}),P_X(\reg_{Z_2})\bigr)= \Hom^*\bigl(\reg_X,R\sHom \bigl(P_X(\reg_{Z_1}), P_X(\reg_{Z_2}\bigr)\bigr)\,.
\]
It is left to show that $F:=R\sHom \bigl(P_X(\reg_{Z_1}), P_X(\reg_{Z_2})\bigr)\neq 0$.
%By Proposition \ref{twistprop} (\ref{twistprop3}), we have $\cH^{-2}(F)\cong \reg_V$, hence, $F\neq 0$.    
We denote by $\alpha_i$ the top non-zero degree of $\Ho^*(\reg_{Z_i})$ for $i=1,2$. Let $V:=X\setminus (Z_1\cup Z_2)$. Then by (\ref{Pdescription}), the cohomology of $P_X(\reg_{Z_i})$ is concentrated in degrees between $-1$ and $\alpha_i+k-2$ with $\cH^{-1}(P_X(\reg_{Z_i}))_{|V}\cong \reg_V$ and $\cH^{\alpha_i+k-2}(P_X(\reg_{Z_i}))_V\cong \reg_V\otimes \Ho^{\alpha_i}(\reg_{Z_i})$. Hence, the spectral sequence
\[
 E^{p,q}_2=\oplus_i\sExt^p\bigl(\cH^i(P(\reg_{Z_1})),\cH^{i+q}(P(\reg_{Z_1}))\bigr)_{|V}\quad \Longrightarrow \quad E^{p+q}=\cH^{p+q}(F)_{|V}
\]
is concentrated in the quadrant to the upper right of $(0,-\alpha_1-k+1)$. Furthermore, we have $E^{0,-\alpha_1-k+1}_2\cong \reg_V\otimes \Ho^{\alpha_1}(\reg_{Z_1})\neq 0$. Hence $\cH^{-\alpha_1-k+1}(F)\neq 0$.   
\end{proof}
Let now $X$ be obtained from strict Enriques varieties via the construction of Section
\ref{mainconstruction}. This means that $X=(Y_1\times \dots\times Y_k)/G$ with $Y_i\in \HK_{2n}$ and 
\[
 G=\bigl\{ f_1^{a_1}\times \dots\times f_k^{a_k}\mid a_1+\dots +a_k\equiv 0 \mod n+1\bigr\}
\]
where the $f_i\in \Aut(Y_i)$ are purely non-symplectic of order $n+1$. There are the $\IP$-twists $P_{Y_i}:=P_{\reg_{Y_i}}\in \Aut(\D(Y_i))$ whose Fourier--Mukai kernels we denote by $\cQ_i$. These induce autoequivalences $P'_{Y_i}:=\FM_{\cQ'_i}\in \Aut(\D(Y_1\times \dots \times Y_k))$ where \[\cQ'_i=\reg_{\Delta Y_1}\boxtimes\dots\boxtimes \cQ_i\boxtimes \dots\boxtimes \reg_{\Delta Y_k}\in\D\bigl((Y_1\times Y_1)\times \dots (Y_i\times Y_i)\times \dots (Y_k\times Y_k)\bigr)\,.\]   
We have 
\begin{align}\label{boxaction}P'_{Y_i}(F_1\boxtimes \dots \boxtimes F_k)=F_1\boxtimes \dots \boxtimes P_{Y_i}(F_i)\boxtimes \dots \boxtimes F_k\,.\end{align} 
We will use in the following the identification $\D(X)\cong \D_G(X')$ of the derived category of $X$ with the derived category of $G$-linearised coherent sheaves on the cover $X'=Y_1\times \dots\times Y_k$; see e.g.\ \cite[Sect.\ 4]{BKR} or \cite{KSosequiequi} for details. One can check that the $\cQ_i$ are $\langle f_i\rangle$-linearisable, hence the $\cQ'_i$ are $G$-linearisable. It follows that the autoequivalences $P'_{Y_i}$ descend to autoequivalences $\check P_{Y_i}\in \Aut(\D_G(X'))\cong \Aut(\D(X))$; see \cite[Thm.\ 1.1]{KSosequiequi}. One might expect that the composition of the $\check P_{Y_i}$ equals $P_X$ but this is not the case.

\begin{prop}
There is an injective group homomorphism $\IZ^{\oplus k+2}\hookrightarrow \Aut(\D(X))$ given by
\[
  e_{k+1}\mapsto P_X\quad,\quad e_{k+2}\mapsto [1]\quad,\quad e_i\mapsto \check P_{Y_i} \quad\text{for $i=1,\dots,k$.}
\]
\end{prop}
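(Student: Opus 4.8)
The plan is to prove two things: that the $k+2$ listed autoequivalences pairwise commute, so that the assignment extends to a homomorphism out of the free abelian group $\IZ^{\oplus k+2}$, and that this homomorphism is injective. Throughout I identify $\D(X)\cong \D_G(X')$ with $X'=Y_1\times\dots\times Y_k$ and write $\pi\colon X'\to X$ for the quotient.

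For commutativity, note first that $[1]$ is central in $\Aut(\D(X))$. The functors $\check P_{Y_i}$ commute with one another: on $X'$ the transforms $P'_{Y_i}$ act on disjoint tensor factors by \eqref{boxaction}, hence commute, and since they are $G$-linearised they descend to commuting autoequivalences. Finally, applying \eqref{boxaction} and Proposition \ref{twistprop}(i) to the hyperk\"ahler factor $Y_i$ (where $\reg_{Y_i}$ is a $\IP^n[2]$-object) gives $\check P_{Y_i}(\reg_X)=\reg_X[-2n]$; by Proposition \ref{twistprop}\ref{phiitem} each $\check P_{Y_i}$ therefore commutes with $P_X$. This produces the homomorphism.

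For injectivity, suppose $\Psi:=\check P_{Y_1}^{c_1}\circ\dots\circ\check P_{Y_k}^{c_k}\circ P_X^{c_{k+1}}\circ[c_{k+2}]=\id$. For each $i$ I would pick a point $p_i\in Y_i$ and nonzero objects $A_j\in\reg_{Y_j}^\perp$ for $j\ne i$ (these exist by Lemma \ref{complement} applied to the hyperk\"ahler $Y_j$), and set $\mathcal H_i:=\pi_*\bigl(A_1\boxtimes\dots\boxtimes\reg_{p_i}\boxtimes\dots\boxtimes A_k\bigr)\in\D(X)$. By the K\"unneth formula the global cohomology of the object on $X'$ vanishes (some factor $\Ho^*(A_j)=0$), so $\mathcal H_i\in\reg_X^\perp$ and hence $P_X$ fixes it by Proposition \ref{twistprop}(ii); the factors $A_j$ are fixed by $P_{Y_j}$ for the same reason. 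Thus $\Psi(\mathcal H_i)=\pi_*\bigl(A_1\boxtimes\dots\boxtimes P_{Y_i}^{c_i}(\reg_{p_i})\boxtimes\dots\boxtimes A_k\bigr)[c_{k+2}]$. Pulling the relation $\Psi(\mathcal H_i)\cong\mathcal H_i$ back along $\pi$ and projecting supports to the $i$-th factor, the left side gives $\langle f_i\rangle\cdot\supp\bigl(P_{Y_i}^{c_i}(\reg_{p_i})\bigr)$ while the right side gives the finite orbit $\langle f_i\rangle\cdot\{p_i\}$.

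The crux is the claim that $P_Y^c(\reg_p)$ is not supported on a finite set for $c\ne 0$; granting it, the support comparison forces $c_i=0$ for all $i$, then $\mathcal H_i\cong\mathcal H_i[c_{k+2}]$ gives $c_{k+2}=0$, and finally $\Psi(\reg_X)=\reg_X$ together with $P_X(\reg_X)=\reg_X[-2k(n+1)+2]$ and $-2k(n+1)+2\ne 0$ forces $c_{k+1}=0$. This is where I expect the main difficulty. The case $c=1$ is clean: by \eqref{Pdescription}, $P_Y(\reg_p)=\cone(C\to\reg_p)$ with $C=\cone(\reg_Y[-2]\xrightarrow{x}\reg_Y)$, and the long exact sequence of cohomology sheaves yields $\cH^0(C)\cong\cH^1(C)\cong\reg_Y$, so $C$, and hence $P_Y(\reg_p)$ away from $p$, has full support. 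For a general power I would show that $P_Y^c(\reg_p)$ retains full support—equivalently $\Hom^*(\reg_{p'},P_Y^c(\reg_p))\ne 0$ for some $p'\ne p$, which already rules out a shifted skyscraper—by propagating the $c=1$ computation through the defining triangle of the twist. Establishing this persistence of full support under iterating the $\IP$-twist is the one genuinely non-formal step, everything else reducing to Proposition \ref{twistprop} and the product formula \eqref{boxaction}.
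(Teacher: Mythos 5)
Your commutativity argument is the same as the paper's and is fine. The injectivity argument, however, contains a genuine gap, and it is exactly the step you flagged yourself: the whole proof rests on the claim that $P_{Y_i}^{c}(\reg_{p_i})$ is not supported on a finite set for every $c\neq 0$, and you only verify this for $c=1$. "Propagating the $c=1$ computation through the defining triangle" is not a formal step: in the triangle computing $P_Y^{c+1}(\reg_p)$ from $P_Y^c(\reg_p)$, the new object is a cone, and cones can collapse support (e.g.\ $\cone(\id)=0$), so one must control the actual connecting maps and not just the supports of the terms. Moreover, no numerical invariant can rescue the claim: since $k$ is even, the K-theory class of the kernel $\cQ$ equals $[\reg_{\Delta}]$ (the class of $\cone(\reg_{X\times X}[-k]\to\reg_{X\times X})$ vanishes), so $\IP$-twists act trivially on K-theory and cohomology, whence $[P_{Y_i}^c(\reg_{p_i})]=[\reg_{p_i}]$ for all $c$; finite support therefore cannot be excluded by any Mukai-vector or Euler-characteristic computation. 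Negative exponents $c<0$ (the inverse twist) would need a separate argument as well. So the "crux" is a real missing proof, not a routine verification.

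The paper avoids this difficulty entirely by a better choice of test objects: instead of skyscrapers, it plugs into $\Psi$ box products in which each factor is either $\reg_{Y_i}$ or a ($\langle f_i\rangle$-linearisable) object $F_i\in\reg_{Y_i}^{\perp}$, the latter produced by Lemma \ref{complement} applied to two distinct $\langle f_i\rangle$-orbits in $Y_i$. On such objects every generator acts by a pure shift: $\check P_{Y_i}$ shifts by $[-2n]$ if the $i$-th factor is $\reg_{Y_i}$ (Proposition \ref{twistprop}(1)) and acts as the identity if it is $F_i$ (Proposition \ref{twistprop}(2)), while $P_X$ acts as the identity as soon as one factor lies in $\reg_{Y_i}^{\perp}$ (by K\"unneth the box product is then in $\reg_X^{\perp}$) and shifts $\reg_X$ by $[-2k(n+1)+2]$. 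In your notation, $\Psi=\id$ evaluated on the all-$F$ product gives $c_{k+2}=0$; replacing the $j$-th factor by $\reg_{Y_j}$ gives $-2nc_j+c_{k+2}=0$, hence $c_j=0$ for all $j$; and evaluating on $\reg_X$ gives $c_{k+1}\bigl(-2k(n+1)+2\bigr)+c_{k+2}=0$, hence $c_{k+1}=0$. Your own setup (pushing forward along $\pi$ and using K\"unneth for orthogonality) is sound; it would be repaired verbatim by replacing $\reg_{p_i}$ with $\reg_{Y_i}$ in your objects $\mathcal H_i$, after which no support analysis is needed.
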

\begin{proof}
Under the equivalence $\D(X)\cong \D_G(X')$, the structure sheaf $\reg_X\in \D(X)$ corresponds to $\reg_{X'}=\reg_{Y_1}\boxtimes \dots\boxtimes \reg_{Y_k}$ equipped with the natural linearisation. By (\ref{boxaction}) and Proposition \ref{twistprop}(1), we get 
\[\check P_{Y_i}(\reg_X)\cong \reg_{Y_1}\boxtimes \dots \boxtimes (\reg_{Y_i}[-2n])\boxtimes \dots \boxtimes \reg_{Y_k}\cong \reg_X[-2n]\,.\]
Hence, by \ref{twistprop}(3), the $\check P_{Y_i}$ commute with $P_X$. By a similar argument, one can see that the $\check P_i$ commute with one another. 
The shift functor $[1]$ commutes with every autoequivalence of the triangulated category $\D(X)$. In summary, we have shown by now that the homomorphism $\IZ^{\oplus k+2}\to \Aut(\D(X))$ is well-defined.

For the injectivity, let us fix for every $i=1,\dots, n$ a $G$-linearisable $F_i\in\reg_{Y_i}^{\perp}$. For example, let $Z_1$ and $Z_2$ in Lemma \ref{complement} be two different $\langle f_i \rangle$-orbits in $Y_i$. Let $a_1,\dots,a_k, b,c\in \IZ$ and set $\Psi:=\check P_{Y_1}^{a_1}\circ \dots \circ \check P_{Y_k}^{a_k}\circ P_X^b[c]$. By plugging various box-products of the $\reg_{Y_i}$ and $F_i$ into $\Psi$ we can show that $\Psi\cong \id$ implies $0=a_1=a_2=\dots=a_k=b=c$; this is very similar to computations done in \cite[Sect.\ 1.4]{Add} or the proof of \cite[Prop.\ 3.18]{KSosEnriques}.
\end{proof}

\begin{remark}
 In the known examples, the $Y_i$ are generalised Kummer varieties; compare Section \ref{Enriquessection}. In these cases, there are many more $\IP$-objects in $\D(Y_i)$ which induce further autoequivalences on $X$; see \cite[Sect.\ 6]{Kru3}.  
\end{remark}

\begin{cor}
Let $X$ be a variety with $\IP^n[4]$-unit for $n\ge 3$. Then, there is an embedding 
$\IZ^4\subset \Aut(\D(X))$.
\end{cor}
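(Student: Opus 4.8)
The plan is to recognise that, for $n\ge 3$, a variety with a $\IP^n[4]$-unit is already of exactly the shape to which the preceding proposition applies, with $k=2$ factors. By Proposition \ref{k=4prop} and Theorem \ref{4theorem}\ref{narb} we may write $X=(Y\times Y')/G$ with $Y,Y'\in\HK_{2n}$, where $G\cong\pi_1(X)$ is cyclic of order $n+1$, acts freely, and is generated by $f\times f'$ with $f\in\Aut(Y)$, $f'\in\Aut(Y')$ purely non-symplectic of order $n+1$. Putting $f_1:=f$ and $f_2:=(f')^{-1}$, we have
\[
G=\langle f\times f'\rangle=\bigl\{f_1^{a_1}\times f_2^{a_2}\mid a_1+a_2\equiv 0 \mod n+1\bigr\}\,,
\]
so $X$ is precisely of the form $(Y_1\times Y_2)/G$ treated in the preceding proposition for $k=2$. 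Since $k+2=4$, that proposition then produces the desired embedding $\IZ^4\hookrightarrow\Aut(\D(X))$, with generators $P_X$, $[1]$, $\check P_{Y}$ and $\check P_{Y'}$.

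The one point requiring care is that the preceding proposition was phrased for an $X$ arising from honest strict Enriques varieties, i.e.\ under the assumption that each cyclic group $\langle f_i\rangle$ acts freely on $Y_i$. For a general variety with a $\IP^n[4]$-unit this need not hold: if $n+1$ is not a prime power, Theorem \ref{4theorem}\ref{nprime} gives no information, and even for $n+1$ a prime power only one of $\langle f\rangle$, $\langle f'\rangle$ is guaranteed to act freely, so neither $Y/\langle f\rangle$ nor $Y'/\langle f'\rangle$ need be a strict Enriques variety. The hard part is therefore to verify that the proof of the preceding proposition goes through under the weaker hypothesis that only the full group $G$ acts freely.

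To handle this I would re-inspect that proof and isolate where freeness is actually consumed. The equivalence $\D(X)\cong\D_G(X')$ and the smoothness of $X$ require only that $G$ act freely on $X'=Y\times Y'$, which holds by Theorem \ref{4theorem}\ref{narb}. The descent of $P_{Y}$ and $P_{Y'}$ to $\check P_{Y},\check P_{Y'}$ needs the Fourier--Mukai kernels $\cQ_1\boxtimes\reg_{\Delta Y_2}$ and $\reg_{\Delta Y_1}\boxtimes\cQ_2$ to be $G$-linearisable; by their box-product form this reduces to $\langle f_i\rangle$-linearisability of $\cQ_i$, which follows from $f_i^*\reg_{Y_i}\cong\reg_{Y_i}$ via Proposition \ref{twistprop}\ref{phiitem} (giving $f_i^*\circ P_{Y_i}\cong P_{Y_i}\circ f_i^*$) and uses only that $f_i$ is an automorphism, never that $\langle f_i\rangle$ acts freely. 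The commutation relations then follow from Proposition \ref{twistprop} exactly as before, and the injectivity is obtained by plugging box-products of $\reg_{Y}$, $\reg_{Y'}$ with suitable $F_Y\in\reg_Y^\perp$, $F_{Y'}\in\reg_{Y'}^\perp$ (produced by Lemma \ref{complement} from two disjoint $\langle f_i\rangle$-invariant closed subvarieties) into the candidate identity $\check P_Y^{a_1}\circ\check P_{Y'}^{a_2}\circ P_X^{b}[c]\cong\id$. Once this bookkeeping confirms that freeness of the ambient $G$ suffices, the corollary is immediate.
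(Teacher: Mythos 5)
Your proposal is correct and follows the same route as the paper, whose entire proof of this corollary is the single sentence: ``By Theorem \ref{4theorem}, we are in the situation of the above proposition.'' The caveat you raise is genuine rather than pedantic: Theorem \ref{4theorem}\ref{narb} only guarantees freeness of the ambient group $G=\langle f\times f'\rangle$ on $Y\times Y'$, not of $\langle f\rangle$ and $\langle f'\rangle$ separately (and for $n+1$ not a prime power not even one of them is known to act freely), so $X$ need not literally be ``obtained from strict Enriques varieties'' as the proposition's hypothesis demands; the paper silently elides this point. Your audit of the proposition's proof---that the equivalence $\D(X)\cong\D_G(X')$ and smoothness of $X$ consume only freeness of $G$, that $\langle f_i\rangle$-linearisability of the kernels $\cQ_i$ and hence $G$-linearisability of the $\cQ'_i$ never uses freeness, and that the test objects from Lemma \ref{complement} can be built from $\langle f_i\rangle$-orbits, which remain disjoint closed subvarieties whether or not the action is free---is exactly the right way to close that gap, and it is consistent in spirit with the paper's own Remark \ref{stillfree}, which already observes that the construction tolerates factors with fixed points.
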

\begin{proof}
 By Theorem \ref{4theorem}, we are in the situation of the above proposition. 
\end{proof}

%%%%%%%%%%%%%%%%%%%%%%%%%%%%%%%%%%%%%%

\subsection{Varieties with $\IP^n[k]$-units as moduli spaces}\label{moduli}
In all the constructions presented in this article, we start with hyperk\"ahler manifolds with special autoequivalences, usually with the property that the quotients are strict Enriques varieties. Then the varieties with $\IP^n[k]$-units are constructed as intermediate quotients between the product of the hyperk\"ahler manifolds and the product of the quotients.

As already mentioned in the introduction, it would be very interesting to find ways to construct varieties $X$ with $\IP^n[k]$-units directly. In the case $k=4$, by Proposition \ref{k=4prop}, the universal cover of such an $X$ decomposes into two hyperk\"ahler manifolds. Hence, one could hope to find in this way new examples of Enriques or even hyperk\"ahler varieties.

One could try to find examples of varieties with $\IP^n[k]$ units by looking at moduli spaces of sheaves (or objects) on varieties with trivial canonical bundle of dimension $k$. Indeed all of the examples that we found in this paper can be realised as such moduli spaces. 

For example, let $A$, $B$ be abelian surfaces together with automorphisms $a\in \Aut(A)$ and $b\in \Aut(B)$. We set $Y:=K_2A$, $Z:=K_2B$,     
$f:=K_2a$, $g:=K_2b$ and assume that $Y/\langle f\rangle$ and $Z/\langle g\rangle$ are strict Enriques varieties of index $3$. This implies that $X:=(Y\times Z)/\langle f\times g\rangle$ has a $\IP^2[4]$-unit; see Remark \ref{quotientdecription}. As $Y=K_2A$ and $Z=K_2B$ are moduli spaces of sheaves on $A$ and $B$, respectively, the product $Y\times Z$ is a moduli space of sheaves on $A\times B$. We denote the universal family by $\cF\in \Coh(A\times B\times Y\times Z)$. This descends to a sheaf $\check \cF\in \Coh((A\times B)/\langle a \times b\rangle \times X)$ which is flat over $X$ with pairwise non-isomorphic fibres. One can deduce this from the fact that $\cF$ is $\langle a\times b\times f\times g\rangle$-linearisable; compare \cite[Sect.\ 3]{KSosequiequi}. 
Hence, we can consider $X$ as a moduli space of sheaves on $(A\times B)/\langle a\times b\rangle$ with universal family $\check\cF$.

\bibliographystyle{alpha}
\addcontentsline{toc}{chapter}{References}
\bibliography{references}

\end{document}